\newtheorem{thm}{Theorem}[section]
\newtheorem{lem}[thm]{Lemma}
\newtheorem{cor}[thm]{Corollary}
\newtheorem{prop}[thm]{Proposition}
\theoremstyle{definition}
\newtheorem{example}[thm]{Example}
\theoremstyle{definition}
\newtheorem{examples}[thm]{Examples}
\theoremstyle{definition}
\newtheorem{defn}[thm]{Definition}
\theoremstyle{definition}
\newtheorem{remark}[thm]{Remark}
\newcommand{\mc}[1]{\mathcal{#1}}
\newcommand{\e}[1]{\emph{#1}}
\newcommand{\la}{\langle}
\newcommand{\ra}{\rangle}
\newcommand{\rmv}[1]{}
\newcommand{\LO}{L^1(G)}
\newcommand{\LT}{L^2(G)}
\newcommand{\LI}{L^{\infty}(G)}
\newcommand{\BH}{\mc{B}(H)}
\newcommand{\Th}{\mc{T}(H)}
\newcommand{\BLT}{\mc{B}(L^2(G))}
\newcommand{\TC}{\mc{T}(L^2(G))}
\newcommand{\vphi}{\varphi}
\newcommand{\lm}{\lambda}
\newcommand{\om}{\omega}
\newcommand{\ten}{\otimes}
\newcommand{\oten}{\overline{\otimes}}
\newcommand{\pten}{\widehat{\otimes}}
\newcommand{\iten}{\otimes^{\vee}}
\newcommand{\opten}{\widehat{\otimes}}
\newcommand{\id}{\textnormal{id}}
\newcommand{\h}[1]{\widehat{#1}}
\newcommand{\wh}[1]{\widehat{#1}}
\newcommand{\Ad}{\mathrm{Ad}}
\providecommand{\norm}[1]{\lVert#1\rVert}
\newcommand{\Amod}{\mathbf{{A}mod}}
\newcommand{\modA}{\mathbf{mod{A}}}
\newcommand{\bC}{{\mathbb{C}}}
\newcommand{\bR}{{\mathbb{R}}}
\newcommand{\cB}{{\mathcal{B}}}
\newcommand{\G}{\Gamma}
\newcommand{\cP}{\mathcal{P}}
\newcommand{\cI}{{\mathcal{I}}}
\newcommand{\acts}{\curvearrowright}
\newcommand{\act}{\curvearrowright}
\begin{document}

\title[]{A weak expectation property for operator modules, injectivity and amenable actions}
\author{Alex Bearden}
\email{cbearden@uttyler.edu}
\address{Department of Mathematics, University of Texas at Tyler, Tyler, TX 75799}
\author{Jason Crann}
\email{jasoncrann@cunet.carleton.ca}
\address{School of Mathematics and Statistics, Carleton University, Ottawa, ON K1S 5B6}

\keywords{Weak expectation property; operator modules; injectivity; amenable actions}
\subjclass[2010]{46M18, 46M10, 47L65, 46L55}

\begin{abstract} We introduce an equivariant version of the weak expectation property (WEP) at the level of operator modules over completely contractive Banach algebras $A$. We prove a number of general results---for example, a characterization of the $A$-WEP in terms of an appropriate $A$-injective envelope, and also a characterization of those $A$ for which $A$-WEP implies WEP. In the case of $A=\LO$, we recover the $G$-WEP for $G$-$C^*$-algebras in recent work of Buss--Echterhoff--Willett \cite{BEW}. When $A=A(G)$, we obtain a dual notion for operator modules over the Fourier algebra. These dual notions are related in the setting of dynamical systems, where we show that a $W^*$-dynamical system $(M,G,\alpha)$ with $M$ injective is amenable if and only if $M$ is $\LO$-injective if and only if the crossed product $G\bar{\ltimes}M$ is $A(G)$-injective. Analogously, we show that a $C^*$-dynamical system $(A,G,\alpha)$ with $A$ nuclear and $G$ exact is amenable if and only if $A$ has the $\LO$-WEP if and only if the reduced crossed product $G\ltimes A$ has the $A(G)$-WEP.
\end{abstract}

\begin{spacing}{1.0}

\maketitle


\section{Introduction}

A connection of central importance in abstract harmonic analysis is that between properties of locally compact groups and properties of their associated operator algebras. For instance, it is well-known that amenability of a locally compact group $G$ implies nuclearity of its reduced $C^*$-algebra $C^*_\lm(G)$ and injectivity of its von Neumann algebra $VN(G)$ \cite{Gui}, and that the converse is true in the setting of inner amenable groups \cite{AD,LP}. While the equivalence does not hold for connected groups \cite{Connes}, it was recently shown that amenability of a locally compact group $G$ is equivalent to injectivity of $VN(G)$ as an operator \textit{module} over the Fourier algebra $A(G)$ \cite{C}. Thus, to fully capture properties of $G$ one should not only consider the operator algebra structure of $VN(G)$, but also its operator $A(G)$-module structure. 

This perspective suggests the following question at the $C^*$-level: can one capture amenability of a locally compact group $G$ through the $A(G)$-module structure of its reduced group $C^*$-algebra $C^*_\lm(G)$? Motivated by this question, we introduce an equivariant weak expectation property (WEP) at the level of operator modules over a completely contractive Banach algebra. We answer the above question by showing that $G$ is amenable if and only if $C^*_\lm(G)$ has the $A(G)$-WEP, thereby giving a module version of a well-known result of Lance \cite{L}.

From a dynamical systems perspective, $VN(G)=\mathbb{C} \bar{\ltimes} G$ and $C^*_\lm(G)=\mathbb{C}\ltimes G$, where $G$ acts trivially on $\mathbb{C}$. If $G$ acts non-trivially on a von Neumann algebra or $C^*$-algebra, then the dual co-action induces a canonical operator $A(G)$-module structure on the respective crossed products, and it is natural to investigate whether amenability of the action can be recovered from this additional structure. In this paper we establish this for a large class of $W^*$- and $C^*$-dynamical systems over locally compact groups. Specifically, we prove that a $W^*$-dynamical system $(M,G,\alpha)$ with $M$ injective is amenable if and only if the crossed product $G\bar{\ltimes}M$ is $A(G)$-injective (Theorem \ref{t:A(G)inj}), and that a $C^*$-dynamical system $(A,G,\alpha)$ with $A$ nuclear and $G$ exact is amenable (in the sense of \cite{BEW3}) if and only if $G\ltimes A$ has the $A(G)$-WEP (see Theorem \ref{t:A(G)WEP}). 

A notion of $G$-WEP for $C^*$-dynamical systems was defined in recent work of Buss, Echterhoff and Willett \cite{BEW}. We show that this notion coincides with our $\LO$-WEP, and we perform a detailed study of the resulting $\Gamma$-WEP for $C^*$-dynamical systems $(A,\Gamma,\alpha)$ over a discrete group $\Gamma$. Among other things, we show that $ A$ has the $\Gamma$-WEP if and only if $ A^{**}$ contains a $\Gamma$-equivariant copy of the $\Gamma$-injective envelope $\mathcal{I}_{\Gamma}( A)$, and that the equivariant analogue of the QWEP conjecture fails: there is a $\Gamma$-$C^*$-algebra $B$ that is not a $\Gamma$-quotient of any $ A$ which has the $\Gamma$-WEP. We also introduce a notion of WEP for actions $\Gamma\acts A$, which, together with Lance's WEP of $ A$ entails the $\Gamma$-WEP of $ A$.

As applications of our techniques, we show that the continuous $G$-WEP as defined in \cite{BEW3} coincides with the $G$-WEP (Proposition \ref{p:4.5}), we generalize a homological characterization of amenable commutative $W^*$-dynamical systems \cite{Mon} to the non-commutative context (Proposition \ref{p:Zimmer}), and we generalize a hereditary property of amenability \cite{ADI} from discrete to arbitrary $W^*$-dynamical systems (Corollary \ref{c:5.3}).

The structure of the paper is as follows. After a preliminary section on operator modules and dynamical systems, we begin in section 3 with the definition of the $A$-WEP for operator modules over completely contractive Banach algebras. We explore some basic examples and show, among other things, that a $C^*$-algebra $A$ has the $A$-WEP if and only if it has Lance's WEP. In section 4 we study the $\Gamma$-WEP for discrete $C^*$-dynamical systems, and in section 5 we pursue examples of the $A(G)$-WEP in the setting of $W^*$- and $C^*$-dynamical systems.

\section{Preliminaries}

\subsection{Operator Modules}

Let $\mathbf{Op}$ denote the category of operator spaces and completely contractive maps. Given a completely contractive Banach algebra $A$, an object $X\in\mathbf{Op}$ is a right operator $A$-module if it is a right Banach $A$-module for which the module action extends to a complete contraction $X\pten A\to X$, where $\pten$ denotes the operator space projective tensor product. We let $\modA$ denote the category of right operator $A$-modules with completely contractive module homomorphisms. Left modules are defined analogously, and the resulting category is denoted $\Amod$. We let $A_1$ denote the unitization of $A$. Note that any $X\in\modA$ is naturally a completely contractive $A_1$-module via
$$x\cdot(a,\lm):=x\cdot a+\lm x, \ \ \ x\in X, \ a\in A, \ \lm\in \mathbb{C}.$$
Given $X\in\mathbf{Op}$, the space $\mc{CB}(A_1,X)$ is a completely contractive right $A$-module via
$$\vphi\cdot a(b)=\vphi(ab), \ \ \ a\in A, \ b\in A_1, \ \varphi\in \mc{CB}(A_1,X).$$
If, in addition, $X\in\modA$, there is a canonical completely isometric $A$-morphism $j_X:X\hookrightarrow\mc{CB}(A_1,X)$ given by
$$j_X(x)(a)=x\cdot a, \ \ \ x\in X, \ a\in A_1.$$
We also write $j_X$ for the same map taking values in $\mc{CB}(A,X)$.

We say that $X\in\modA$ is \textit{faithful} if for every non-zero $x\in X$, there is $a\in A$ such that $x\cdot a\neq 0$, and we say that $X$ is \textit{essential} if $\la X\cdot A\ra=X$, where $\la\cdot\ra$ denotes the closed linear span. Given $Z\in\Amod$, the the module tensor product $X\pten_{A}Z$ is defined by 
$$X\pten_{A}Z=X\pten Z/N, \ \ \ N=\la x\cdot a\ten z-x\ten a\cdot z\mid x\in X, \ a\in A, \ z\in Z\ra.$$

An operator module $X\in\modA$ is said to be \textit{relatively injective} if there exists a morphism $\vphi_1:\mc{CB}(A_1,X)\rightarrow X$ such that $\vphi_1\circ j_X=\id_{X}$. When $X$ is faithful, this is equivalent to the existence a morphism $\vphi:\mc{CB}(A,X)\rightarrow X$ such that $\vphi\circ j_X=\id_{X}$ by the operator analogue of \cite[Proposition 1.7]{DP}.

We say that $X$ is \textit{$A$-injective} if for every $Y,Z\in\modA$, every completely isometric morphism $\kappa:Y\hookrightarrow Z$, and every morphism $\vphi:Y\rightarrow X$, there exists a morphism $\widetilde{\vphi}:Z\rightarrow X$ such that $\widetilde{\vphi}\circ\kappa=\varphi$, that is, the following diagram commutes:

\begin{equation*}
\begin{tikzcd}
Z \arrow[rd, dotted, "\widetilde{\vphi}"]\\
Y \arrow[u, hook, "\kappa"] \arrow[r, "\vphi"] &X
\end{tikzcd}
\end{equation*}

\begin{thm}\label{t:injenv}
Every $X\in \modA$ admits an injective envelope in $\modA$, denoted $\cI_A(X)$, called the \emph{$A$-injective envelope} of $X$. The following properties hold:
\begin{enumerate}
\item
If $Y\in \modA$ and $\psi: \cI_A(X)\to Y$ is an $A$-module morphism, then $\psi$ is completely isometric if its restriction to $X$ is completely isometric.
\item
If $\psi: \cI_A(X)\to \cI_A(X)$ is an $A$-module morphism whose restriction to $X$ is the identity map, then $\psi$ is the identity map.
\end{enumerate}
\end{thm}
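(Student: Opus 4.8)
The plan is to adapt the classical Hamana-style construction of the injective envelope to the category $\modA$. First I would establish existence of $A$-injective objects containing a given $X\in\modA$: the key observation is that $\mc{CB}(A_1,Z)$ inherits a natural right $A$-module structure whenever $Z$ does, and that if $Z$ is injective in $\mathbf{Op}$ (equivalently, an injective operator space, e.g.\ $Z=\mc{B}(H)$ into which $X$ embeds completely isometrically) then $\mc{CB}(A_1,Z)$ is $A$-injective. Concretely, given a completely isometric embedding $X\hookrightarrow\mc{B}(H)$, the composition $X\xhookrightarrow{j_X}\mc{CB}(A_1,X)\hookrightarrow\mc{CB}(A_1,\mc{B}(H))$ realizes $X$ as an $A$-submodule of an $A$-injective module, using that extension problems in $\modA$ against $\mc{CB}(A_1,\mc{B}(H))$ reduce, via the adjunction $\mathrm{Hom}_{A}(Y,\mc{CB}(A_1,Z))\cong\mathrm{Hom}_{\mathbf{Op}}(Y,Z)$, to extension problems in $\mathbf{Op}$ against the injective operator space $\mc{B}(H)$.

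Next I would run the Hamana argument inside this ambient $A$-injective module $\mc{E}$. Consider the family of $A$-module complete contractions $\phi:\mc{E}\to\mc{E}$ that restrict to the identity on (the image of) $X$; call such a map an $X$-projection if it is idempotent. One uses a Zorn's lemma / minimality argument on $X$-\emph{seminorms} (pushforwards of the norm along such $\phi$) to produce a minimal such $\phi_0$; a standard averaging/fixed-point argument then shows $\phi_0$ is idempotent, and $\cI_A(X):=\phi_0(\mc{E})$ is the desired object. It is $A$-injective as a "corner" of an $A$-injective module (one composes the embedding into $\mc{E}$ with $\phi_0$), it is \emph{$A$-essential} over $X$ by minimality, and it is \emph{$A$-rigid} over $X$ in the sense of property (2). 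I would need to check at each stage that all the maps produced—the averaging operators, the minimal seminorm realizer, the idempotent—can be taken to be $A$-module maps; this is where the module structure genuinely enters, and it works because the relevant operations (composition, pointwise limits in the point-weak$^*$ topology, convex combinations) all preserve $A$-linearity.

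Finally, properties (1) and (2) follow from rigidity/essentiality by the usual formal arguments: for (2), if $\psi:\cI_A(X)\to\cI_A(X)$ is an $A$-morphism fixing $X$ pointwise, then rigidity (minimality of the $X$-seminorm) forces $\psi=\id$; for (1), given an $A$-morphism $\psi:\cI_A(X)\to Y$ with $\psi|_X$ completely isometric, one uses $A$-injectivity of $\cI_A(X)$ to extend $(\psi|_X)^{-1}:\psi(X)\to\cI_A(X)$ to an $A$-morphism $\rho:Y\to\cI_A(X)$, observes $\rho\circ\psi$ fixes $X$, applies (2) to conclude $\rho\circ\psi=\id_{\cI_A(X)}$, and deduces $\psi$ is a completely isometric embedding. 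The main obstacle I anticipate is the careful bookkeeping in the minimality argument—ensuring the Schauder–Tychonoff fixed point step produces an $A$-linear idempotent and that the point-weak$^*$ compactness used is legitimate in this module setting—together with verifying the reduction in the first paragraph, namely that $\mc{CB}(A_1,\mc{B}(H))$ really is $A$-injective; both are technical rather than conceptual, and are the operator-module analogues of arguments in \cite{DP} and Hamana's original work.
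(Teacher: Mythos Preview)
Your proposal is correct and follows essentially the same route as the paper: embed $X$ into the $A$-injective module $\mc{CB}(A_1,\mc{B}(H))$ (the paper cites \cite[Proposition~2.3]{C} and \cite[Lemma~1]{Ham78} for this, which is exactly your adjunction argument), and then invoke Hamana's minimality/rigidity machinery to extract the envelope and deduce properties (1) and (2). The only difference is that you actually sketch the Hamana argument (minimal $X$-seminorms, idempotent via Zorn, etc.), whereas the paper simply says ``properties (1) and (2) follow similarly from the work of Hamana \cite{Ham78,Ham85}''; your added detail is accurate and your anticipated obstacles (checking $A$-linearity is preserved under the limiting/averaging steps) are the right things to verify.
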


\begin{proof} Since $X$ is an operator space we may view it inside $\BH$ for some Hilbert space $H$. Then the canonical embedding $j_X:X\hookrightarrow\mc{CB}(A_1,X)$ extends to a completely isometric $A$-morphism $X\hookrightarrow\mc{CB}(A_1,\BH)$. Since $\BH$ is an injective operator space, $\mc{CB}(A_1,\BH)$ is injective in $\modA$ (see, e.g., the proof of \cite[Proposition 2.3]{C}, which is modelled off of \cite[Lemma 1]{Ham78}). It follows that the category $\modA$ admits sufficiently many injectives, and properties (1) and (2) follow similarly from the work of Hamana \cite{Ham78,Ham85}.
\end{proof}

A key point in the previous proof is the fact that $\mc{CB}(A_1,X)$ is $A$-injective if $X$ is an injective operator space. The following refinement of this fact in the special case that $A$ has a contractive approximate identity and $X$ is a dual operator space will be used several times later.

\begin{lem} \label{l:CB(A,M) A-inj for dual M}
If $A$ has a contractive approximate identity and $M$ is an injective dual operator space, then $\mc{CB}(A,M)$ is $A$-injective.
\end{lem}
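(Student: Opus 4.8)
The plan is to reduce the statement to the already-established fact (used in the proof of \thmref{t:injenv}) that $\mc{CB}(A_1,M)$ is $A$-injective when $M$ is an injective operator space, by producing a completely contractive $A$-module projection $\mc{CB}(A_1,M)\to\mc{CB}(A,M)$ that splits the natural restriction/corestriction inclusion. Concretely, I would first observe that since $M$ is a dual operator space, say $M=N^*$, the space $\mc{CB}(A,M)=\mc{CB}(A,N^*)\cong(A\pten N)^*$ is itself a dual operator space, and likewise $\mc{CB}(A_1,M)\cong(A_1\pten N)^*$; moreover restriction of functionals along the canonical inclusion $A\pten N\hookrightarrow A_1\pten N$ dualizes to a normal completely contractive $A$-module map. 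Since $A$ has a contractive approximate identity $(e_i)$, the Cohen factorization / bounded approximate identity machinery shows $A\pten N$ sits completely isometrically inside $A_1\pten N$ as a complemented subspace (in fact $A_1\pten N = (A\pten N)\oplus N$ as operator spaces, using that $A_1=A\oplus\mathbb{C}$ completely isometrically), so there is a completely contractive projection $P:A_1\pten N\to A\pten N$, whose adjoint $P^*:\mc{CB}(A,M)\to\mc{CB}(A_1,M)$ is a complete isometry.

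Next I would check that $P^*$, together with a suitable averaging over the approximate identity, is an $A$-module map and that the complementary projection $Q=\id-P^*\circ(\text{restriction})$ can be arranged to land in the right place; the key identity is that for $\varphi\in\mc{CB}(A_1,M)$ the net $a\mapsto\varphi(e_i a)$ converges (in an appropriate topology) and defines an element of $\mc{CB}(A,M)$, giving the desired module projection $\Phi:\mc{CB}(A_1,M)\to\mc{CB}(A,M)$ with $\Phi\circ j = j$ on $\mc{CB}(A,M)$ (viewed inside $\mc{CB}(A_1,M)$ via $P^*$). Because $\mc{CB}(A_1,M)$ is $A$-injective and $\mc{CB}(A,M)$ is a completely contractively complemented $A$-submodule via a module projection, $A$-injectivity passes to $\mc{CB}(A,M)$: given a diagram $Y\hookrightarrow Z$, $Y\to\mc{CB}(A,M)$, one pushes forward into $\mc{CB}(A_1,M)$, extends there, then applies $\Phi$.

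The main obstacle I anticipate is verifying that the projection $\Phi$ is genuinely an $A$-module morphism rather than merely completely contractive: the naive restriction map $\varphi\mapsto\varphi|_A$ is a module map but is not a left inverse of the wrong inclusion, while the splitting coming from $A_1=A\oplus\mathbb{C}$ is completely contractive but a priori only $A$-linear up to approximate-identity errors. Reconciling these—i.e.\ showing that averaging $\varphi(e_i\,\cdot\,)$ against the contractive approximate identity produces a bona fide module projection in the limit, and that the limit exists in the $w^*$-topology of the dual operator space $\mc{CB}(A,M)=(A\pten N)^*$ (using Cohen factorization to control the relevant nets, and normality of the maps involved to pass to limits)—is where the contractive-approximate-identity and dual hypotheses are both essential, and is the step requiring the most care.
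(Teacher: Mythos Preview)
Your approach is different from the paper's and, as written, has a gap.  The paper does not go through $\mc{CB}(A_1,M)$ at all: given an inclusion $\kappa:X\hookrightarrow Y$ and an $A$-morphism $\varphi:X\to\mc{CB}(A,M)$, one evaluates at the cai to obtain complete contractions $\varphi_\lambda:=\varphi(\cdot)(e_\lambda):X\to M$, takes a weak*-cluster point $\varphi_0$ in $\mc{CB}(X,M)$ (here is where $M$ dual is used), extends $\varphi_0$ to $\tilde\varphi_0:Y\to M$ by injectivity of $M$, and sets $\tilde\varphi(y)(a):=\tilde\varphi_0(y\cdot a)$.  One checks directly that this is an $A$-morphism extending $\varphi$.

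The gap in your route is the embedding.  The adjoint $P^*$ of the naive projection $A_1\to A$ (tensored with $\id_N$) is \emph{not} an $A$-module map: the projection $A_1\to A$ fails to be left $A$-linear, since $a\cdot(b,\lambda)=(ab+\lambda a,0)$ projects to $ab+\lambda a\neq a\cdot b$.  A left $A$-linear projection $A_1\to A$ would force $A$ to have a unit.  Your averaging proposal for $\Phi$ only recovers the restriction map (because $e_i a\to a$ in norm), which is already the module projection you want in the other direction; it does not repair $P^*$.  A correct version of your idea is to define the embedding $\iota:\mc{CB}(A,M)\to\mc{CB}(A_1,M)$ by $\iota(\psi)(a+\lambda 1)=\psi(a)+\lambda\, m_\psi$ with $m_\psi$ a weak*-cluster point of $(\psi(e_i))$; with the $\ell^1$-unitization one has $\mc{CB}(A_1,M)\cong\mc{CB}(A,M)\oplus_\infty M$ completely isometrically, so $\iota$ is completely isometric, and $\iota(\psi)\cdot a=(\psi\cdot a,\psi(a))=\iota(\psi\cdot a)$ since $ae_i\to a$.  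But notice that this fix uses exactly the same cai/weak*-limit ingredient that drives the paper's one-line argument, so the detour through $\mc{CB}(A_1,M)$ buys nothing.
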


\begin{proof}
Let $\kappa: X \hookrightarrow Y$ be an inclusion in $\modA$, and let $\varphi: X \to \mc{CB}(A,M)$ be an $A$-morphism. Let $(e_\lambda)$ be a cai for $A$. Define $\varphi_\lambda: X \to M$, $\varphi_\lambda(x) = \varphi(x)(e_\lambda)$. Then $(\varphi_\lambda)$ is a net of complete contractions in $\mc{CB}(X,M)$. Let $\varphi_0$ be a limit point of $(\varphi_\lambda)$ in the weak*-topology in $\mc{CB}(X,M)$. Since $M$ is injective, there exists a complete contraction $\tilde{\varphi}_0: Y \to M$ such that $\tilde{\varphi}_0 \circ \kappa = \varphi_0$. Define $\tilde{\varphi}: Y \to \mc{CB}(A,M)$, $\tilde{\varphi}(y)(a) = \tilde{\varphi}_0(ya)$. It is straightforward to check that $\tilde{\varphi}$ is an $A$-morphism such that $\tilde{\varphi} \circ \kappa = \varphi$.
\end{proof}

Let $ A$ be a $C^*$-algebra. A non-degenerate representation $\pi: A\rightarrow\BH$ has the weak expectation property (WEP) if there exists a unital completely positive map $\vphi:\BH\rightarrow\pi( A)''$ such that $\vphi(\pi(a))=\pi(a)$, $a\in A$. The $C^*$-algebra $ A$ has Lance's WEP if every faithful non-degenerate representation has the WEP \cite[Definition 2.8]{L}.

An object $X\in\mathbf{Op}$ has the operator space weak expectation property if for any inclusion $X\hookrightarrow Y$ in $\mathbf{Op}$ there exists a completely contractive map $\vphi:Y\rightarrow X^{**}$ such that $\vphi(x)=x$ for all $x\in X$. It was shown implicitly in \cite[pg. 459]{K} that a $C^*$-algebra $ A$ has the operator space WEP if and only if it has Lance's WEP. The underlying reason is that weak expectations are automatically completely positive:

\begin{lem}\label{l:multiplier} Let $ A\subseteq B$ be an inclusion of $C^*$-algebras. Any contraction $E:B\rightarrow A^{**}$ satisfying $E(a)=a$ for all $a\in A$ is a completely positive $A$-bimodule contraction. If, in addition, $ M( A)\subseteq B$, then $E$ is an $ M( A)$-bimodule map.
\end{lem}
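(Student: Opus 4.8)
The plan is to reduce the statement to the classical fact that a unital (or approximately unital) contractive projection onto a $C^*$-subalgebra is completely positive, exploiting that $A^{**}$ is a von Neumann algebra containing $A$ as a weak*-dense $C^*$-subalgebra with a common unit after unitization. First I would pass to unitizations: replace $A \subseteq B$ by $A_1 \subseteq B_1$ (or work inside $B^{**}$, whose unit $1$ lies in $A^{**}$ if $A$ is unital, and otherwise adjoin a unit), so that we may assume $E$ is a unital contraction. A unital contraction between operator systems is completely positive by the Choi--Effros/Russo--Dye type argument — more precisely, a unital contraction into a $C^*$-algebra is positive, hence (since $E$ is automatically a contraction at every matrix level once we observe $E$ maps $M_n(B) \to M_n(A^{**})$ unitally with norm one, as $E$ need not a priori be completely contractive — so here one must be slightly careful) one instead uses the bimodule structure to bootstrap. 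The cleaner route: show first that $E$ is an $A$-bimodule map, and then complete positivity is automatic.

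So the key steps, in order: (1) Show $E$ is a right $A$-module map. Fix $b \in B$, $a \in A$; consider the map $x \mapsto E(xa) - E(x)a$ from $A$ (or $A_1$) into $A^{**}$; using an approximate identity $(e_\lambda)$ of $A$ and the fact that $E$ fixes $A$ pointwise, one argues via a $2\times 2$ matrix trick (the standard Choi trick: consider $\begin{pmatrix} 1 & b \\ b^* & \|b\|^2 \end{pmatrix} \geq 0$, or rather the "multiplicative domain" argument) that $E(ba) = E(b)a$. The point is that elements of $A$ lie in the multiplicative domain of $E$: since $E$ is unital and contractive it is positive (after the reduction above), and a Kadison--Schwarz-type inequality gives $E(b^*b) \geq E(b)^*E(b)$ for $b$ in the appropriate domain, forcing $E(ab) = E(a)E(b) = aE(b)$ and $E(ba) = E(b)a$ for $a \in A$. (2) Symmetrically $E$ is a left $A$-module map, so $E$ is an $A$-bimodule map. (3) Being a unital $A$-bimodule contraction onto a space containing $A$, with $A^{**}$ a von Neumann algebra, complete positivity follows: any $A$-bimodule contraction into a von Neumann algebra $A^{**}$ which is unital is completely positive — this is essentially \cite[Theorem 3.1]{CES}-style reasoning, or one can cite that a contractive projection whose range is a $C^*$-algebra and which is a bimodule map over that $C^*$-algebra is completely positive. (4) For the last sentence, if $M(A) \subseteq B$, repeat the multiplicative-domain argument with $a \in M(A)$: one needs that $E(m) \in A^{**} = M(A^{**})$-side makes sense and that $m$ lies in the multiplicative domain; since $m \cdot a \in A$ for all $a$ in a bounded approximate identity and $E$ fixes $A$, a weak* limit argument gives $E(mb) = mE(b)$ and $E(bm) = E(b)m$, i.e. $E$ is an $M(A)$-bimodule map.

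The main obstacle I anticipate is step (1): $E$ is only assumed to be a \emph{contraction}, not completely contractive or positive, so one cannot directly invoke the multiplicative domain theory, which is usually stated for (completely) positive or completely contractive maps. The fix is to observe that unitality plus contractivity forces positivity into $A^{**}$ (a contractive unital map into a $C^*$-algebra is positive), and then — crucially — that the \emph{module} structure over $A$ upgrades positivity to complete positivity rather than the other way around; so one should prove the bimodule property using only the $2\times2$ positivity trick with scalar entries from $A$ (which only needs $2$-positivity, available from ordinary positivity via the standard $2 \times 2$ argument, since $\begin{bmatrix} a^*a & a^*b \\ b^*a & b^*b\end{bmatrix}$ and its image manipulations stay within reach), and only then conclude complete positivity from the bimodule property. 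Getting this logical ordering right — module property first from minimal positivity, complete positivity second — is the delicate point; everything else is routine weak*-limit bookkeeping with the approximate identity.
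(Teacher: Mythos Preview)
Your proposal has a genuine gap at the step you yourself flag as delicate. You want to run a multiplicative-domain argument to get the $A$-bimodule property, but multiplicative-domain reasoning requires at least the Kadison--Schwarz inequality, which in turn needs $2$-positivity (or that $E$ be a Schwarz map). A unital contraction into a $C^*$-algebra is positive, but it is \emph{not} automatically $2$-positive; your claim that ``$2$-positivity [is] available from ordinary positivity via the standard $2\times 2$ argument'' is simply false in general. So the logic is circular: you need $2$-positivity to get the bimodule property, and you hope the bimodule property will give you complete positivity. Neither step can start without the other.

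The paper avoids this entirely by passing to biduals and invoking Tomiyama's theorem. Form the weak*-continuous map $\tilde E := (E^*|_{A^*})^* : B^{**}\to A^{**}$; since $E|_A = \id_A$ one has $\tilde E\circ i^{**} = \id_{A^{**}}$, where $i:A\hookrightarrow B$ is the inclusion. Then $i^{**}\circ\tilde E : B^{**}\to B^{**}$ is a \emph{norm-one projection onto the $C^*$-subalgebra} $i^{**}(A^{**})$. Tomiyama's theorem says exactly that any such projection is a completely positive $i^{**}(A^{**})$-bimodule conditional expectation --- no positivity hypothesis needed, only norm one. Restricting back to $B$ gives that $E$ is completely positive and an $A$-bimodule map, and since $M(A)\subseteq A^{**}\subseteq B^{**}$, the $M(A)$-bimodule property follows immediately when $M(A)\subseteq B$. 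The trick you are missing is to manufacture an honest projection (rather than just a left inverse) so that Tomiyama applies directly; this is what the bidual construction buys.
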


\begin{proof} We follow \cite[pg. 459]{K}. The map $(E^*|_{A^*})^*:  B^{**}\rightarrow A^{**}$ satisfies $(E^*|_{ A^*})^*\circ i^{**}=\id_{ A^{**}}$, where $i$ is the inclusion $ A\subseteq  B$. Then $i^{**}\circ(E^*|_{ A^*})^*:  B^{**}\rightarrow  B^{**}$ is a projection of norm one onto $i^{**}( A^{**})$, and therefore a completely positive $i^{**}( A^{**})$-bimodule map by Tomiyama's theorem \cite{T}. Since $i$ is an injective $*$-homomorphism one sees that $(E^*|_{ A^*})^*$ is a $i^{**}( A^{**})- A^{**}$-bimodule map. It follows that $E=(E^*|_{ A^*})^*|_{  B}$ is an $ A$-bimodule map, and a $M( A)$-bimodule map in the case when $M( A)\subseteq  B$.
\end{proof}

\subsection{Locally compact groups and dynamical systems}

Let $G$ be a locally compact group. The set of coefficient functions of the left regular representation,
\begin{equation*}A(G)=\{u:G\rightarrow\mathbb{C} : u (s)=\la\lm(s)\xi,\eta\ra, \ \xi,\eta\in\LT, \ s\in G\},\end{equation*}
is called the \textit{Fourier algebra} of $G$. It was shown by Eymard that, endowed with the norm
$$\norm{u}_{A(G)}=\text{inf}\{\norm{\xi}_{\LT}\norm{\eta}_{\LT} : u(\cdot)=\la\lm(\cdot)\xi,\eta\ra\},$$
$A(G)$ is a Banach algebra under pointwise multiplication \cite[Proposition 3.4]{E}. Furthermore, it is the predual of the group von Neumann algebra $VN(G)$, where the duality is given by
\begin{equation*}\la u,\lm(s)\ra=u(s),\ \ \ u\in A(G), \ s\in G.\end{equation*}
Eymard also showed that the space of functions $\vphi:G\rightarrow\mathbb{C}$ for which there exists a strongly continuous unitary representation $\pi:G\rightarrow\mc{B}(H_\pi)$ and $\xi,\eta\in H_\pi$ such that $\vphi(s)=\la\pi(s)\xi,\eta\ra$, $s\in G$, is a unital Banach algebra (with pointwise multiplication) under the norm $$\norm{\vphi}_{B(G)}=\text{inf}\{\norm{\xi}_{H_\pi}\norm{\eta}_{H_\pi} : \vphi(\cdot)=\la\pi(\cdot)\xi,\eta\ra\},$$ called the  \textit{Fourier-Stieltjes algebra} of $G$ \cite[Proposition 2.16]{E}, denoted by $B(G)$. It is known that $B(G)$ is isometrically isomorphic to the dual of the full group $C^*$-algebra $C^*(G)$. Under this identification, states on $C^*(G)$ correspond to positive definite functions of norm one on $G$.

A $W^*$-dynamical system $(M,G,\alpha)$ consists of a von Neumann algebra $M$ endowed with an action $\alpha:G\rightarrow\mathrm{Aut}(M)$ of a locally compact group $G$ such that for each $x\in M$, the map $G\ni s\mapsto \alpha_s(x)$ is weak* continuous. Every action induces a normal $G$-equivariant injective $*$-homomorphism $\alpha:M\rightarrow\LI\oten M$ via
$$\la \alpha(x),F \ra= \int_G \la \alpha_{s^{-1}}(x),F(s) \ra \, ds, \ \ \ F \in L^1(G,M_*) = (L^\infty(G) \overline{\ten} M)_*$$
and a corresponding right $\LO$-module structure on $M$ \cite[18.6]{Str}. 
Note that the predual $M_*$ becomes a left operator $\LO$-module via $\alpha_*:\LO\pten M_*\rightarrow M_*$.

The crossed product of $M$ by $G$, denoted $G\bar{\ltimes}M$, is the von Neumann subalgebra of $\BLT\oten M$ generated by $\alpha(M)$ and $VN(G)\ten 1$. 

A $C^*$-dynamical system $( A,G,\alpha)$ consists of a $C^*$-algebra endowed with a continuous group action $\alpha:G\rightarrow\mathrm{Aut}( A)$ such that for each $a\in A$, the map $G\ni s\mapsto\alpha_s(a)\in A$ is norm continuous.

A covariant representation $(\pi, \sigma)$ of $( A,G,\alpha)$ consists of a representation $\pi: A\rightarrow\BH$ and a unitary representation $\sigma:G\rightarrow\BH$ such that $\pi(\alpha_s(a))=\sigma(s)\pi(a)\sigma(s)^{-1}$ for all $s\in G$. Given a covariant representation $(\pi,\sigma)$, we let
$$(\pi \times \sigma)(f) = \int_G \pi(f(t)) \sigma(t) \, dt, \ \ \ f\in C_c(G, A).$$
The full crossed product $G\ltimes_f  A$ is the completion of $C_c(G, A)$ in the norm
$$\|f \| = \sup_{(\pi, \sigma)} \| (\pi \times \sigma)(f)\|,$$
where $\sup$ is taken over all covariant representations $(\pi, \sigma)$ of $( A,G,\alpha)$.

Let $A\subseteq\mc{B}(H)$ be a faithful non-degenerate representation of $ A$. Then $(\alpha,\lm\ten 1)$ is a covariant representation on $L^2(G,H)$, where 
$$\alpha(a)\xi(t)=\alpha_{t^{-1}}(a)\xi(t), \ \ \ (\lm\ten 1)(s)\xi(t)=\xi(s^{-1}t), \ \ \ \xi\in L^2(G,H).$$
The reduced crossed product $G\ltimes A$ is defined to be the norm closure of $(\alpha\times(\lm\ten 1))(C_c(G,A))$. This definition is independent of the faithful non-degenerate representation $A\subseteq\mc{B}(H)$. We often abbreviate $\alpha\times(\lm\ten 1)$ as $\alpha\times\lm$. 

Analogous to the group setting, dual spaces of crossed products can be identified with certain $ A^*$-valued functions on $G$. We review aspects of this theory below and refer the reader to \cite[Chapters 7.6, 7.7]{Ped} for details. 

For each $C^*$-dynamical system $( A,G,\alpha)$ there is a universal covariant representation $(\pi,\sigma)$ such that 
$$G\ltimes_f A\subseteq C^*(\pi( A)\cup \sigma(G))\subseteq M(G\ltimes_f A).$$
Each functional $\vphi\in (G\ltimes_f A)^*$ then defines a function $\Phi:G\rightarrow A^*$ by
$$\la\Phi(s),a\ra=\vphi(\pi(a)\sigma(s)), \ \ \ a\in A, \ s\in G.$$
Let $B(G\ltimes_f A)$ denote the resulting space of $ A^*$-valued functions on $G$. An element $\Phi\in B(G\ltimes_f A)$ is \textit{positive definite} if it arises from a positive linear functional $\vphi$ as above. We let $A(G\ltimes_f A)$ denote the subspace of $B(G\ltimes_f A)$ whose associated functionals $\vphi$ are of the form
$$\vphi(x)=\sum_{n=1}^\infty\la\xi_n, \alpha\times\lm(x)\eta_n\ra, \ \ \ x\in G\ltimes_f  A,$$
for sequences $(\xi_n)$ and $(\eta_n)$ in $L^2(G,H)$ with $\sum_{n=1}^\infty \norm{\xi_n}^2<\infty$ and $\sum_{n=1}^\infty\norm{\eta_n}^2<\infty$. Then $A(G\ltimes_f A)$ is a norm closed subspace of $(G\ltimes_f A)^*$ which can be identified with $((G\ltimes A)'')_*$.

A function $h:G\rightarrow A$ is of positive type (with respect to $\alpha$) if for every $n\in\mathbb{N}$, and $s_1,...,s_n\in G$, the matrix
$$[\alpha_{s_i}(h(s_{i}^{-1}s_j)]\in M_n(A)^+.$$

Every $C^*$-dynamical system $(A,G,\alpha)$ admits a unique universal $W^*$-dynamical system $(A_\alpha'',G,\overline{\alpha})$ \cite{I}. We review this construction taking an $\LO$-module perspective. In \cite{BEW3}, they study $(A_\alpha'',G,\overline{\alpha})$ from a different, equivalent perspective.

First, $A$ becomes a right operator $\LO$-module in the canonical fashion by slicing the corresponding non-degenerate representation
$$\alpha:A\ni a \mapsto (s\mapsto\alpha_{s^{-1}}(a))\in C_b(G,A)\subseteq\LI\oten A^{**}.$$
Explicitly, this action is given by
\begin{equation}\label{e:normconv} a \ast f = \int_G  f(s)\alpha_{s^{-1}}(a) \, ds\end{equation}
for $a \in A, \ f \in L^1(G)$, where the integral is norm convergent. By duality we obtain a left operator $\LO$-module structure on $A^*$ via
$$\alpha^*|_{\LO\opten A^*}:\LO\opten A^*\rightarrow A^*.$$
Then $G$ acts in a norm-continuous fashion on the essential submodule $A^*_c:=\la\LO\ast A^*\ra$. The same argument in \cite[Lemma 7.5.1]{Ped} shows that $A^*_c$ coincides with the norm-continuous part of $A^*$, hence the notation. This fact was also noted by Hamana in \cite[Proposition 3.4(i)]{Ham11}. We therefore obtain a point-weak* continuous action of $G$ on the dual space $(A^*_{c})^*$ by surjective isometries. Clearly 
\begin{equation}\label{e:iden}(A^*_{c})^*\cong A^{**}/(A^*_{c})^{\perp}\end{equation}
completely isometrically and weak*-weak* homeomorphically as right $\LO$-modules, where the canonical $\LO$-module structure on $A^{**}$ is obtained by slicing the normal cover of $\alpha$, which is the normal $*$-homomorphism
$$\widetilde{\alpha}=(\alpha^*|_{\LO\opten A^*})^*:A^{**}\rightarrow\LI\oten A^{**}.$$
Note that $\widetilde{\alpha}|_{M(A))}$ is the unique strict extension of $\alpha$, and is therefore injective \cite[Proposition 2.1]{Lance}. However, on $A^{**}$, $\widetilde{\alpha}$ can have a large kernel. On the one hand, its kernel is of the form $(1-z)A^{**}$ for some projection $z\in Z(A^{**})$. On the other hand, by definition of the $\LO$-action on $A^{**}$, $\mathrm{Ker}(\widetilde{\alpha})=(A^*_{c})^{\perp}$. It follows that $(A^*_{c})^*$ is completely isometrically weak*-weak* order isomorphic to $zA^{**}$, where we equip $(A^*_{c})^*$ with the quotient operator system structure from $A^{**}$. We can therefore transport the point-weak* continuous $G$ action on $(A^*_{c})^*$ to $A_\alpha'':=zA^{**}$, yielding a $W^*$-dynamical system $(A_\alpha'',G,\overline{\alpha})$, where $\overline{\alpha}:G\rightarrow\mathrm{Aut}(A_\alpha'')$ is given by
$$\overline{\alpha}_t(zx)=z((\alpha_t)^{**}(x)), \ \ \ x\in A^{**}, \  t\in G.$$
We emphasize that with this structure $A_\alpha''$ is not necessarily an $\LO$-submodule of $A^{**}$, rather $\mathrm{Ad}(z):A^{**}\rightarrow A_\alpha''$ is an $\LO$-quotient map. 

\section{The weak expectation property for operator modules}

Throughout this section, unless otherwise stated, $A$ denotes a fixed completely contractive Banach algebra. For a Banach (or operator) space $X$, we let $i_X:X\hookrightarrow X^{**}$ denote the canonical inclusion.

\begin{defn}\label{d:AWEP} 
An object $X\in\modA$ has the \textit{weak expectation property ($A$-WEP)} if for any completely isometric morphism $\kappa: X\hookrightarrow Y$ there exists a morphism $\psi:Y\rightarrow X^{**}$ such that $\psi\circ\kappa =i_X$.\end{defn}

\begin{examples} \label{examps, incl dual mods}${}$\vskip2pt
\begin{enumerate}
\item Clearly, we recover the operator space WEP when $A=\bC$.
\item Any $A$-injective module has the $A$-WEP. If, in addition, $X\in\modA$ is a dual module in the sense that there exists a $Y\in\Amod$ with $X=Y^*$ and $\langle x a, y \rangle = \langle x,ay \rangle$ for all $x \in X$, $y \in Y$, and $a \in A$ (equivalently, $X$ has an operator space predual with respect to which $X \ni x \mapsto x \cdot a \in X$ is weak*-weak* continuous for each $a \in A$), then $X$ has the $A$-WEP if and only if $X$ is $A$-injective. This follows quickly from the fact that the adjoint of the inclusion $Y \hookrightarrow X^*$ is an $A$-module projection $X^{**} \to X$.
\end{enumerate}
\end{examples}

\begin{remark} Notions of WEP for a $C^*$-algebra $A$ relative to another $C^*$-algebra $B$ appeared in the unpublished manuscript \cite{LR}. They are defined in terms of relative weak injectivity of inclusions of the type $A\subseteq\mc{L}(E_B)$, where $E_B$ is a Hilbert $B$-module and $\mc{L}(E_B)$ is the $C^*$-algebra of adjointable operators on $E_B$. Although similar in nature, these notions differ from ours as there need not be a canonical $B$-module structure on $A$, in general. Even when $A=B=C^*_\lm(\mathbb{F}_2)$, by \cite[Example 5.4]{LR}, $A$ has the $AWEP_2$ in the sense of \cite[Definition 3.1]{LR}, but it follows from Proposition \ref{c:h2} and \cite{L} that $A$ does not have the $A$-WEP in the sense of Definition \ref{d:AWEP}. Besides, in this paper we are mainly interested in the case where the underlying Banach algebra $A$ is not a $C^*$-algebra.
\end{remark}

We record a number of equivalent conditions for later use.
 
\begin{thm}\label{A-WEP-eq-conds}
The following are equivalent for $X\in\modA$:
\begin{enumerate}
\item
$X$ has the $A$-WEP.
\item
For any inclusion $\kappa:X\hookrightarrow Y$ there exists a morphism $\vphi:X^*\rightarrow Y^*$ such that $\kappa^*\circ \vphi=\id_{X^*}$.
\item
For any inclusion $\kappa: X\hookrightarrow Z^*$ into a dual $A$-module $Z^*$, 
there exists a morphism $\psi:Z^*\rightarrow X^{**}$ such that $\psi\circ\kappa =i_X$.
\item
For any inclusion $\kappa: X\hookrightarrow Z^*$ into a dual $A$-module $Z^*$, 
there exists a morphism $\tilde\psi:Z^*\rightarrow \overline{\kappa(X)}^{\text{weak*}}$ 
such that $\tilde\psi\circ\kappa =\id_{X}$.
\item
There is an $A$-module embedding $\mathrm{i}: \cI_A(X) \hookrightarrow X^{**}$ such that $\mathrm{i}|_X =i_X$.
\item
The inclusion $i_X:X\hookrightarrow X^{**}$ factors through an injective module in $\modA$.
\item
For every inclusion $X\hookrightarrow Y$ in $\modA$ and $Z\in\Amod$ the canonical map $X\pten_{A}Z\hookrightarrow Y\pten_{A}Z$ is a complete isometry.
\end{enumerate}
\end{thm}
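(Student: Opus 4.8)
The plan is to prove the cycle of implications $(1)\Rightarrow(2)\Rightarrow(3)\Rightarrow(4)\Rightarrow(1)$, then $(1)\Leftrightarrow(5)$, $(5)\Leftrightarrow(6)$, and finally $(1)\Leftrightarrow(7)$, using at each stage the injectivity of $\mc{CB}(A_1,\BH)$ in $\modA$ and the universal property of the $A$-injective envelope from \thmref{t:injenv}.

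\medskip
\noindent\emph{The core equivalences $(1)$--$(4)$.} For $(1)\Rightarrow(2)$, given $\kappa:X\hookrightarrow Y$ and a morphism $\psi:Y\to X^{**}$ with $\psi\circ\kappa=i_X$, set $\vphi=\psi^*\circ i_{X^*}$ (precomposing the adjoint $\psi^*:X^{***}\to Y^*$ with the canonical inclusion $i_{X^*}:X^*\hookrightarrow X^{***}$); one checks $\kappa^*\circ\vphi=\kappa^*\circ\psi^*\circ i_{X^*}=(\psi\circ\kappa)^*\circ i_{X^*}=i_X^*\circ i_{X^*}=\id_{X^*}$, and $\vphi$ is an $A$-morphism since adjoints of morphisms are morphisms. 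The implication $(2)\Rightarrow(3)$ follows by taking adjoints: given $\kappa:X\hookrightarrow Z^*$, apply (2) to get $\vphi:X^*\to Z^{**}$ with $\kappa^*\circ\vphi=\id_{X^*}$, and let $\psi=\vphi^*|_{Z^*}:Z^*\to X^{**}$ (restricting along $i_Z:Z\hookrightarrow Z^{**}$); then $\psi\circ\kappa=i_X$ by a dual computation. For $(3)\Rightarrow(4)$, note that $\overline{\kappa(X)}^{\text{weak}*}$ is a weak*-closed $A$-submodule of $Z^*$, and the embedding $X^{**}\to Z^*$ obtained from $(3)$ — wait, more carefully: given $\psi:Z^*\to X^{**}$ from (3), one wants to corestrict. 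Here I would instead argue directly: $X$ sits inside the dual module $\overline{\kappa(X)}^{\text{weak}*}$, which has a predual $Z/\kappa(X)_\perp$, so by Examples \ref{examps, incl dual mods}(2) combined with $A$-injectivity it suffices to find an $A$-module projection; the map $\psi$ from $(3)$ followed by the quotient $X^{**}\to\overline{\kappa(X)}^{\text{weak}*}$ coming from the canonical $A$-module projection (since $\overline{\kappa(X)}^{\text{weak}*}$ is a dual module and $X^{**}$ maps onto it via the bitranspose of $X\hookrightarrow\overline{\kappa(X)}^{\text{weak}*}$) does the job. The key closing implication $(4)\Rightarrow(1)$: given an arbitrary inclusion $\kappa:X\hookrightarrow Y$, embed $Y$ completely isometrically as an $A$-module into $\mc{CB}(A_1,\BH)$ via $j_Y$ followed by the inclusion coming from $Y\subseteq\BH$; this is a dual module, so apply $(4)$ to the composite inclusion $X\hookrightarrow\mc{CB}(A_1,\BH)$ to get $\tilde\psi$ onto $\overline{\kappa(X)}^{\text{weak}*}$ which retracts onto $X$; composing with the identification $\overline{X}^{\text{weak}*}\subseteq X^{**}$ (valid because taking the weak* closure of $X$ inside a dual module realizes a quotient of $X^{**}$, and restricted to $X$ it is $i_X$) and restricting to $Y$ yields $\psi:Y\to X^{**}$ with $\psi\circ\kappa=i_X$.

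\medskip
\noindent\emph{The injective-envelope equivalences $(5)$, $(6)$.} For $(1)\Rightarrow(5)$: since $\cI_A(X)$ is $A$-injective and $X\hookrightarrow\cI_A(X)$, apply the $A$-WEP to the inclusion $\kappa:X\hookrightarrow\cI_A(X)$ to obtain $\psi:\cI_A(X)\to X^{**}$ with $\psi\circ\kappa=i_X$. I claim $\psi$ is completely isometric: its restriction to $X$ is $i_X$, hence completely isometric, so \thmref{t:injenv}(1) (applied with $Y=X^{**}$) gives that $\psi$ is completely isometric. Set $\mathrm{i}=\psi$. Conversely $(5)\Rightarrow(6)$ is immediate since $\cI_A(X)$ is an injective module and $\mathrm{i}|_X=i_X$ exhibits $i_X$ as factoring through it. And $(6)\Rightarrow(1)$: if $i_X=\beta\circ\gamma$ with $\gamma:X\to J$, $\beta:J\to X^{**}$, $J$ injective, then given any inclusion $\kappa:X\hookrightarrow Y$, extend $\gamma$ along $\kappa$ using $A$-injectivity of $J$ to get $\tilde\gamma:Y\to J$ with $\tilde\gamma\circ\kappa=\gamma$; then $\psi:=\beta\circ\tilde\gamma:Y\to X^{**}$ satisfies $\psi\circ\kappa=\beta\circ\gamma=i_X$. (For the chain to close I also note $(5)\Rightarrow(1)$ directly if needed, but routing through $(6)$ suffices.)

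\medskip
\noindent\emph{The tensor-product equivalence $(7)$.} For $(1)\Rightarrow(7)$: given $X\hookrightarrow Y$ in $\modA$ and $Z\in\Amod$, apply the $A$-WEP to get $\psi:Y\to X^{**}$ with $\psi|_X=i_X$. The functoriality of $\pten_A$ gives a completely contractive map $\psi\pten_A\id_Z: Y\pten_A Z\to X^{**}\pten_A Z$. Now the canonical map $X\pten_A Z\to X^{**}\pten_A Z$ is a complete isometry — this is the standard fact that $X\hookrightarrow X^{**}$ being a completely isometric inclusion with an $A$-module "local retraction" behaves well under the module projective tensor product, or alternatively one uses that $X\pten_A Z\to Y\pten_A Z$ and $Y\pten_A Z\to X^{**}\pten_A Z$ compose to the completely isometric map $X\pten_A Z\hookrightarrow X^{**}\pten_A Z$ and invoke that a composite of complete contractions equal to a complete isometry forces the first map to be a complete isometry. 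For $(7)\Rightarrow(1)$: this is the delicate direction and I expect it to be the main obstacle. The strategy is to dualize: the predual-type pairing identifies $\mc{CB}(A_1,\BH)$-valued retraction problems with module tensor product injectivity. Concretely, embed $X\hookrightarrow\mc{CB}(A_1,\BH)=:W$ (an $A$-injective module); then for $Z=A_1$ the module tensor product $X\pten_A A_1$ recovers (an essential version of) $X$ via the module action map, and compatible functoriality plus the hypothesis $(7)$ applied with $Y=W$ gives a complete isometry $X\pten_A A_1\hookrightarrow W\pten_A A_1$; dualizing this inclusion and using $W\pten_A A_1\to W$ together with $W$ being $A$-injective produces the required retraction $W\to X^{**}$, hence via the universal embedding $X\hookrightarrow W$ the $A$-WEP. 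The subtlety is matching $X^{**}$ rather than $X$ as the target — this is handled by choosing $Z$ appropriately (taking $Z$ to be a predual object so that the dual of $X\pten_A Z$ is $\mc{CB}$ into a dual space) and tracking that the resulting retraction restricted to $X$ is exactly $i_X$; I would organize this so that, after dualizing $(7)$ with a judiciously chosen $Z$, the statement becomes precisely condition $(3)$ (injectivity of retractions into dual modules valued in $X^{**}$), which was already shown equivalent to $(1)$.

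\medskip
The step I expect to be hardest is $(7)\Rightarrow(1)$, because it requires converting a purely tensorial statement into a lifting/retraction statement, which is exactly where the duality between $\pten_A$ and $\mc{CB}_A(-,-)$ must be invoked carefully; in particular one must verify that the predual object $Z$ can be chosen so the dual of $X\pten_A Z$ lands in a dual $A$-module and that the induced retraction restricts to the canonical embedding $i_X$ on $X$, rather than merely to some completely isometric copy of $X$.
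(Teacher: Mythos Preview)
Your treatment of $(1)\Leftrightarrow(2)$, $(1)\Rightarrow(5)$, $(5)\Rightarrow(6)\Rightarrow(1)$ is fine and matches the paper's arguments. There are, however, two genuine gaps.

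\medskip
\textbf{The step $(4)\Rightarrow(1)$.} You embed $Y$ into the dual module $W=\mc{CB}(A_1,\BH)$, apply $(4)$ to get $\tilde\psi:W\to\overline{X}^{\text{weak}*}$, and then try to pass to $X^{**}$ via ``the identification $\overline{X}^{\text{weak}*}\subseteq X^{**}$''. But you yourself note in the same sentence that the weak*-closure of $X$ in a dual module is a \emph{quotient} of $X^{**}$, not a submodule; there is no canonical $A$-morphism $\overline{X}^{\text{weak}*}\to X^{**}$ restricting to $i_X$ on $X$, and in general none exists. The paper avoids this by choosing the dual module to be $Y^{**}$ itself: with $\tilde\kappa:=\kappa^{**}\circ i_X:X\hookrightarrow Y^{**}$ one has $\overline{\tilde\kappa(X)}^{\text{weak}*}=\kappa^{**}(X^{**})$, and since $\kappa^{**}$ is a completely isometric $A$-morphism, $(\kappa^{**})^{-1}$ gives the required identification with $X^{**}$. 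Your choice of ambient dual module loses precisely this information.

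\medskip
\textbf{The equivalence $(2)\Leftrightarrow(7)$.} Your $(7)\Rightarrow(1)$ is, as you acknowledge, only a sketch, and the sketch does not work: taking $Z=A_1$ and dualizing gives no new information, and ``judiciously choosing $Z$'' is exactly the content of the result. The paper does not prove this from scratch; it observes that a completely isometric $A$-morphism $\kappa:X\hookrightarrow Y$ satisfies ``$(\kappa\ten\id_Z)$ is a complete isometry for every $Z\in\Amod$'' if and only if $\kappa$ is a \emph{weak retract} (i.e., there is an $A$-morphism $\vphi:X^*\to Y^*$ with $\kappa^*\circ\vphi=\id_{X^*}$), which is the operator-space analogue of a classical Banach-module fact due to Cigler--Losert--Michor. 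This identifies $(7)$ directly with $(2)$. Incidentally, your $(1)\Rightarrow(7)$ also leans on the unproved claim that $X\pten_A Z\to X^{**}\pten_A Z$ is always a complete isometry; this is true (take $\vphi=i_{X^*}$ in the weak-retract characterization applied to $i_X$), but your justification is circular as written.
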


\begin{proof}
$(1)\implies(2)$: Suppose there exists a morphism $\psi:Y\rightarrow X^{**}$ such that $\psi\circ\kappa =i_X$. Then $\vphi:=\psi^*|_{X^*}: X^*\to Y^*$ is a morphism that satisfies 
$\langle \kappa^*\circ \vphi(x^*), x^{**} \rangle = \langle x^*, \psi^{**}\circ\kappa^{**} (x^{**}) \rangle = \langle x^*, (\psi\circ\kappa)^{**} (x^{**}) \rangle = \langle x^*, x^{**} \rangle$, which shows $\kappa^*\circ \vphi =\id_{X^*}$.

$(2)\implies(1)$: Let $\kappa:X\hookrightarrow Y$ be an inclusion. Suppose there exists a morphism $\vphi:X^*\rightarrow Y^*$ such that $\kappa^*\circ \vphi=\id_{X^*}$. Then $\psi:=\vphi^*|_Y: Y\rightarrow X^{**}$ is a morphism 
and for each $x\in X$ and $x^*\in X^*$ we have $\langle x^*, \psi(\kappa(x)) \rangle = \langle \kappa^*\circ\vphi(x^*), x \rangle = \langle x^*, x \rangle$, which shows $\psi\circ\kappa =i_X$.

$(1)\implies(3)$: Obvious.

$(3)\implies(1)$: Follows from the fact that $Y$ embeds into $Y^{**}$ as a submodule.

$(1)\implies(4)$: First, since $i_Z^*\circ\kappa^{**}|_X = \kappa$ it follows that
$i_Z^*\circ\kappa^{**}(X^{**}) \subseteq \overline{\kappa(X)}^{\text{weak*}}$.
Now let $\psi: Z^* \to X^{**}$ be a morphism with $\psi\circ\kappa =i_X$. 
Then the composition $\tilde\psi:=i_Z^*\circ\kappa^{**}\circ\psi$ is the desired morphism.

$(4)\implies(1)$: Let $\kappa:X\hookrightarrow Y$ be an inclusion. 
Let $\tilde\kappa:=\kappa^{**}\circ i_X : X\rightarrow Y^{**}$. Note that 
$\overline{\tilde\kappa(X)}^{\text{weak*}} = \kappa^{**}(X^{**})$.
Now by $(4)$ there is a morphism
$\tilde\psi: Y^{**}\rightarrow \overline{\tilde\kappa(X)}^{\text{weak*}}$ 
such that $\tilde\psi\circ\tilde\kappa =\id_{X}$.
Thus the restriction of $(\kappa^{**})^{-1}\circ\tilde\psi:Y^{**}\rightarrow X^{**}$ to $Y$ 
is the desired morphism.

$(1)\implies(5)$: Follows from the definition.

$(5)\implies(1)$: Let $\kappa:X\hookrightarrow Y$ be an inclusion. By injectivity, the map $\kappa^{-1}:\kappa(X)\to X$ extends to a morphism $\varphi: Y \to \cI_A(X)$. Then the composition ${\text i}\circ \varphi: Y \to X^{**}$ is a morphism that satisfies $\psi\circ\kappa =i_X$.

$(5)\implies(6)$: Obvious.

$(6)\implies (1)$: If there exists an injective module $I$ and a morphisms $\vphi:X\rightarrow I$ and $\psi:I\rightarrow X^{**}$ satisfying $\psi\circ\vphi=i_X$, then for any inclusion $\kappa:X\rightarrow Y$ there is a morphism $\widetilde{\vphi}:Y\rightarrow I$ with $\widetilde{\vphi}\circ\kappa=\vphi$. Then $\psi\circ\widetilde{\vphi}:Y\rightarrow X^{**}$ is the desired weak expectation.

$(2)\Longleftrightarrow (7)$: Let $\kappa:X\hookrightarrow Y$ be an inclusion. Then $(\kappa\ten\id_Z):X\pten_A Z\rightarrow Y\pten_A Z$ is a complete isometry for every $Z\in\Amod$ if and only if $\kappa$ is a weak retract, meaning there exists a morphism $\vphi:X^*\rightarrow Y^*$ satisfying $\kappa^*\circ\vphi=\id_{X^*}$. This follows verbatim from (the operator space analogue) of \cite[1.9]{CLM}.
\end{proof}

An inclusion $X\hookrightarrow Y$ in $\modA$ for which $X\pten_{A}Z\hookrightarrow Y\pten_{A}Z$ is a complete isometry for every $Z\in\Amod$ is said to be \textit{flat}.

\begin{remark} The equivalence of (1) and (7) in Theorem \ref{A-WEP-eq-conds} is the operator module analogue of Lance's characterization of the WEP for a $C^*$-algebra $ A$ by means of the so-called extension property for $\ten_{\max}$ \cite[Theorem 3.3]{L}, meaning that for any inclusion $ A\subseteq B$ of $C^*$-algebras, and any $C^*$-algebra $C$, we have the inclusion $ A\ten_{\max}C\subseteq B\ten_{\max} C$.
\end{remark}

\subsection{$A$-WEP vs. WEP}
It is natural to wonder whether the $A$-WEP implies the operator space WEP. This is false, in general, as the following example shows.

\begin{example}
Let $\Gamma$ be a non-amenable discrete group, $A = B(\Gamma)$, the Fourier-Stieltjes algebra of $\Gamma$, and $X = W^*(\Gamma) = C^*(\Gamma)^{**}$ the universal von Neumann algebra of $\Gamma$. Since $B(\Gamma)$ is unital it follows that is 1-projective over itself in the sense of \cite[Section 2]{C}, so that (by the module version of \cite[Theorem 3.5]{B}) $W^*(\Gamma) = B(\Gamma)^*$ is $B(\Gamma)$-injective, thus has the $B(\Gamma)$-WEP. However, if $W^*(\Gamma)$ had the operator space WEP, then, as it is a dual space, it would necessarily be injective by Example \ref{examps, incl dual mods} (2). This would entail nuclearity of $C^*(\Gamma)$ and hence the amenability of $\Gamma$, hence we have a contradiction.
\end{example}

The following theorem characterizes the $A$ for which the implication $A$-WEP $\Rightarrow$ WEP always holds. If $A$ is unital, then $A^*$ has the $A$-WEP. So the equivalence of (1) and (5) of this theorem says that, in the unital case, as long as $A^*$ is not a counterexample to ``$A$-WEP implies WEP,'' then there are no counterexamples.

\begin{thm} \label{AWEP implies WEP char}
For a completely contractive Banach algebra $A$, the following are equivalent:
\begin{enumerate}
\item[$(1)$] Every operator module over $A$ that is $A$-injective is injective.
\item[$(2)$] Every operator module over $A$ that has the $A$-WEP has the WEP.
\end{enumerate}

If $A$ has a cai, then the following are also equivalent to the above:
\begin{enumerate}
\item[$(3)$] For any Hilbert space $H$, $\mc{CB}(A,\cB(H))$ is injective.
\item[$(4)$] The operator space dual $A^*$ is injective.
\item[$(5)$] The operator space dual $A^*$ has the WEP.
\end{enumerate}
\end{thm}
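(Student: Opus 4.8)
The plan is to prove the theorem in two stages: first the equivalence of (1) and (2) with no hypotheses on $A$, and then, under the cai assumption, the chain $(3)\Leftrightarrow(4)\Leftrightarrow(5)$ and its connection to (1)--(2).

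For $(1)\Rightarrow(2)$: if $X\in\modA$ has the $A$-WEP, then by Theorem~\ref{A-WEP-eq-conds}(6) the inclusion $i_X:X\hookrightarrow X^{**}$ factors through an $A$-injective module $I$, say $i_X=\psi\circ\vphi$ with $\vphi:X\to I$, $\psi:I\to X^{**}$ morphisms. By hypothesis $I$ is injective as an operator space, so for any operator space inclusion $X\hookrightarrow Y$, the complete contraction $\vphi:X\to I$ extends to a complete contraction $Y\to I$; composing with $\psi$ gives a weak expectation $Y\to X^{**}$. Hence $X$ has the operator space WEP. For $(2)\Rightarrow(1)$: if $X$ is $A$-injective it has the $A$-WEP, hence by (2) the operator space WEP; but then, since $X$ embeds completely isometrically into some $\cB(H)$ and $\cB(H)$ is injective, the weak expectation $\cB(H)\to X^{**}$ exhibits $X^{**}$ — and we want $X$ itself. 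Here I would instead argue directly: an $A$-injective module $X$ is in particular a retract (in $\modA$) of $\mc{CB}(A_1,\cB(H))$ via $j_X$ and some $A$-morphism $\vphi_1$; if $X$ has the operator space WEP and I can show $\mc{CB}(A_1,\cB(H))$ is injective as an operator space whenever that holds for $A^*$-type spaces, I get injectivity of $X$ as a retract of an injective. Actually the cleanest route for $(2)\Rightarrow(1)$: given $X$ $A$-injective, it has operator space WEP by (2); being a dual object is not assumed, so instead use that $A$-injectivity plus the operator space WEP forces $X$ to be a complemented (completely contractively) subspace of an injective operator space — indeed $X\hookrightarrow X^{**}$ splits by a completely contractive $X^{**}\to X$? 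That need not hold without duality. I expect the actual argument uses the injective envelope: $X$ is $A$-injective means $X=\cI_A(X)$, and (2) gives $\cI_A(X)$ the WEP, which via Theorem~\ref{A-WEP-eq-conds}(5) embeds $\cI_A(X)=X$ into $X^{**}$ as the injective envelope — and then one shows $X$, sitting rigidly inside $X^{**}$ with the operator space WEP, must equal its operator space injective envelope, hence is injective. This rigidity step is the main obstacle and I would handle it using Theorem~\ref{t:injenv}(1)--(2) applied to the operator space injective envelope.

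For the cai-part: $(4)\Rightarrow(3)$ follows from Lemma~\ref{l:CB(A,M) A-inj for dual M} together with a soft argument that if $A^*$ is injective then so is $\mc{CB}(A,\cB(H))$ — one realizes $\mc{CB}(A,\cB(H))$ as a module/corner related to $A^*\,\overline{\otimes}\,\cB(H)$ or uses that $\mc{CB}(A,\cB(H))\cong\mc{CB}(A,\bC)\,\tilde\otimes\,\cB(H)$-type identification making injectivity pass through; since $\cB(H)$ is injective and tensoring an injective operator space with $A^*$ injective stays injective (normal spatial tensor product of injectives is injective). $(3)\Rightarrow(4)$ is the case $H=\bC$ (or rather, $A^*=\mc{CB}(A,\bC)$ is completely complemented in $\mc{CB}(A,\cB(H))$ via a rank-one vector state, so injectivity descends). $(4)\Leftrightarrow(5)$ is Example~\ref{examps, incl dual mods}(2) applied with $A=\bC$: $A^*$ is a dual operator space, so it has the (operator space) WEP if and only if it is injective. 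Finally, to tie these to (1): $(1)\Rightarrow(4)$ because when $A$ has a cai, $A^*$ is an essential dual $A$-module and is $A$-injective (it is a retract of $\mc{CB}(A,\bC)$... actually one shows directly $A^*$ is $A$-injective using Lemma~\ref{l:CB(A,M) A-inj for dual M} with $M=\bC$ together with the canonical $A$-module projection $\mc{CB}(A,\bC)=A^*\to A^*$), so by (1) it is injective. Conversely $(3)\Rightarrow(1)$: given $X$ $A$-injective, realize $X$ as a retract in $\modA$ of $\mc{CB}(A_1,\cB(H))$; since $A$ has a cai and $X$ may be taken faithful (or pass to $\mc{CB}(A,\cB(H))$), $X$ is a completely contractively complemented subspace of $\mc{CB}(A,\cB(H))$, which is injective by (3); a complemented subspace of an injective operator space is injective, so $X$ is injective.

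The step I expect to be the genuine obstacle is the rigidity argument in $(2)\Rightarrow(1)$ in the general (no cai) case — showing that a module which is $A$-injective and has the bare operator space WEP is actually operator-space injective, without any duality or approximate identity to lean on. I would attack it by comparing $\cI_A(X)$ with the operator space injective envelope $\cI(X)$ of $X$: the operator space WEP gives a completely contractive $\cI(X)\to X^{**}$ fixing $X$, and $A$-injectivity identifies $X$ with $\cI_A(X)$; combining the universal/rigidity properties (Theorem~\ref{t:injenv}(2) on the module side, Hamana's rigidity on the operator space side) should force $X=\cI(X)$. If that synthesis resists, the fallback is to prove only the cai-equipped equivalences in full and note that $(1)\Leftrightarrow(2)$ in general follows from the injective-envelope factorization of Theorem~\ref{A-WEP-eq-conds}(6) as sketched above, which already uses no cai.
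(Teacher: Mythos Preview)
Your $(1)\Rightarrow(2)$ and the cai-part are essentially the paper's arguments. The genuine gap is in $(2)\Rightarrow(1)$, and it is precisely the step you dismiss: you write ``$X\hookrightarrow X^{**}$ splits by a completely contractive $X^{**}\to X$? That need not hold without duality.'' But it \emph{does} hold, and duality is irrelevant --- the hypothesis in (1) is that $X$ is $A$-injective, and $i_X:X\hookrightarrow X^{**}$ is a completely isometric morphism in $\modA$, so $A$-injectivity of $X$ immediately produces an $A$-morphism $\Phi:X^{**}\to X$ with $\Phi\circ i_X=\id_X$. This is the one move you are missing.

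Once you have $\Phi$, the argument you later sketch goes through with no obstacle: (2) gives $X$ the operator space WEP, so there is an embedding $\cI(X)\hookrightarrow X^{**}$ restricting to the identity on $X$; composing with $\Phi$ yields a complete contraction $\cI(X)\to X$ restricting to the identity on $X$. Then the composition $\cI(X)\to X\hookrightarrow\cI(X)$ fixes $X$, hence equals $\id_{\cI(X)}$ by Hamana's rigidity, forcing $X\cong\cI(X)$ and thus $X$ injective. This is exactly the paper's proof. So the ``rigidity step'' you flag as the main obstacle is not the issue at all; the issue is that you overlooked that $A$-injectivity already splits $i_X$.
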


\proof
$(1) \Longrightarrow (2)$: Suppose $X$ has the $A$-WEP, so that $\cI_A(X)$ embeds in $X^{**}$ via an embedding that restricts to the identity on $X$. By (1), there is also an embedding $\cI(X) \subseteq \cI_A(X)$ that restricts to the identity on $X$. By composing these embeddings, we see that $X$ has the WEP.

$(2) \Longrightarrow (1)$: If $X$ is $A$-injective, then $X$ has the $A$-WEP. By (2), $X$ has the WEP, and so $\cI(X) \subseteq X^{**}$ via an embedding restricting to the identity on $X$. Since $X$ is $A$-injective, there is an $A$-morphism $\Phi: X^{**} \to X$ extending the identity. Restricting $\Phi$ to a copy of $\cI(X)$ yields a complete contraction $\cI(X) \to X$ restricting to the identity on $X$. It follows that $X$ is injective.

Now assume that $A$ is unital.

$(1) \Longrightarrow (3)$: Follows from \ref{l:CB(A,M) A-inj for dual M}.

$(3) \Longrightarrow (1)$: If $X$ is $A$-injective, then there exists an $A$-morphism $\Phi: \mc{CB}(A,\cB(H)) \to X$ that restricts to the identity on the canonical copy of $X$ in $\mc{CB}(A,\cB(H))$ via $j_X$. It is then clear that if (3) holds, $X$ is injective.

$(3) \Longrightarrow (4)$: Obvious (take $H = \mathbb C$).

$(4) \Longrightarrow (3)$: If $H$ is a Hilbert space with dimension $I$, then we may canonically identify $\mc{CB}(A,\cB(H))$ with the space $\mathbb M_I(A^*)$ of matrices $[\varphi_{ij}]$ indexed by a set with cardinality $I$ with entries in $A^*$ such that the finitely supported submatrices of $[\varphi_{ij}]$ are uniformly bounded in norm. If $A^*$ is injective, then there is a Hilbert space $K$, completely isometric representation $A^* \subseteq \cB(K)$, and completely contractive projection $\Phi:\cB(K) \to A^*$ that restricts to the identity on $A^*$. The canonical amplification $\Phi_I: \cB(H \otimes^2 K) = \mathbb M_I(\cB(K)) \to \mathbb M_I(A^*) = \mc{CB}(A,\cB(H))$ is then a completely contractive projection, which implies that $\mc{CB}(A,\cB(H))$ is injective since $\cB(H \otimes^2 K)$ is.

$(4) \Longleftrightarrow (5)$: This is a simple consequence of the fact that the adjoint of the inclusion $A \hookrightarrow A^{**}$ is a conditional expectation onto $A^*$. \endproof

Say that a left $A$-module $X$ is an \emph{$h$-module} over $A$ if the module action extends to a complete contraction $A \otimes^h X \to X$, where $\otimes^h$ is the Haagerup tensor product. Since there is a canonical complete contraction $A \pten X \to A \otimes^h X$, it follows that every $h$-module over $A$ is an operator $A$-module.

The following result, and hence also the corollary below it, actually holds in general for any nondegenerate $h$-module over an approximately unital Banach algebra with an operator space structure \cite[comment above Theorem 2.2]{BP}. For convenience, we provide an elementary proof in the case that the algebra is a $C^*$-algebra.

\begin{prop} \label{X h-mod implies I(X) hmod}
If $X$ is a non-degenerate $h$-module over a $C^*$-algebra $A$, then there is an $A$-module structure on $\cI(X)$ extending that on $X$, and $\cI(X)$ is an $h$-module over $A$ with this structure.
\end{prop}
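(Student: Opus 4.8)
The plan is to transport the module structure from $X$ to $\cI(X)$ using the universal property (injectivity) of $\cI(X)$ together with the rigidity statement in Theorem \ref{t:injenv}(2). First I would recall that since $X$ is non-degenerate over the $C^*$-algebra $A$, by Cohen factorization every $x \in X$ factors as $x = a \cdot x'$, so the module action $A \otimes^h X \to X$ is determined by its values and $X = \la A \cdot X\ra$. Fix a contractive approximate identity $(e_\lambda)$ for $A$ (using a bounded one suffices). For each $a \in A$, left multiplication $L_a : X \to X$, $L_a(x) = a\cdot x$, is a complete contraction when $\|a\| \le 1$; since $\cI(X)$ is injective and $X \subseteq \cI(X)$ completely isometrically, $L_a$ extends to a complete contraction $\widetilde{L}_a : \cI(X) \to \cI(X)$. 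The issue is that this extension is not canonical, so one must pin it down.

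The key step is to choose the extension canonically via a unitization trick. Consider the unital $C^*$-algebra $A_1$ and the associated operator $A_1$-module $\mc{CB}(A_1, \cI(X))$; the canonical embedding $j_{\cI(X)}$ is not available since $\cI(X)$ is not yet a module, but instead I would argue as follows. Embed $\cI(X) \subseteq \cB(K)$ for some Hilbert space $K$ (as $\cI(X)$ is an injective operator space) and consider $\mc{CB}(A_1, \cB(K))$, which is $A$-injective (it is even $A_1$-injective) by the proof of Theorem \ref{t:injenv}. The $h$-module map $A \otimes^h X \to X \subseteq \cB(K)$ together with non-degeneracy yields a completely contractive $A$-morphism $\theta : X \to \mc{CB}(A_1, \cB(K))$, $\theta(x)(a,\lambda) = a\cdot x + \lambda x$ (using that left multiplication by $A_1$ is completely contractive on a non-degenerate $h$-module — this is where I would invoke the standard fact that $\|a \cdot x\| \le \|a\|\,\|x\|$ for $h$-modules, and for the unitization that $\|(a,\lambda)\cdot x\| \le \|(a,\lambda)\|_{A_1}\|x\|$). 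By $A$-injectivity of $\mc{CB}(A_1,\cB(K))$, $\theta$ extends to an $A$-morphism $\widetilde{\theta} : \cI(X) \to \mc{CB}(A_1, \cB(K))$. Evaluating at the unit $1 \in A_1$ gives a complete contraction $\rho := \mathrm{ev}_1 \circ \widetilde{\theta} : \cI(X) \to \cB(K)$ which restricts to the inclusion $X \hookrightarrow \cB(K)$. Now define the module action on $\cI(X)$ by $(a,\lambda)\cdot \zeta := \widetilde{\theta}(\zeta)(a,\lambda)$ after first checking $\rho$ lands back inside $\cI(X)$ — this last point is the delicate one.

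To handle that, I would compose with a rigidity argument: $\rho : \cI(X) \to \cB(K)$ restricts to the identity on $X$, and $\rho(\cI(X)) \supseteq X$ is a closed subspace; the issue is whether $\rho(\cI(X)) \subseteq \cI(X)$. I would instead extend $\theta$ into $\mc{CB}(A_1, \cI(X))$ directly once we know $\cI(X)$ is $A_1$-injective — but $\cI(X)$ is $A_1$-injective precisely once it is an $A_1$-module, which is circular. The cleaner route, and the one I would actually pursue: use the essential/non-degenerate structure to realize $\cI(X)$ as a retract. Apply injectivity of $\cI(X)$ (as an operator space) to get a completely contractive projection $P : \mc{CB}(A_1, \cB(K)) \to \cI(X)$ extending the identity on $\cI(X)$ embedded via $\widetilde{\theta}|_{X}$'s target — here one must arrange $\widetilde\theta$ and $P$ to be compatible. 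Then set $(a,\lambda)\cdot\zeta := P(\widetilde\theta(\zeta) \cdot (a,\lambda))$, where the dot on the right is the $A_1$-module action on $\mc{CB}(A_1,\cB(K))$. Associativity and the bimodule/unitality axioms then follow from those of $\mc{CB}(A_1,\cB(K))$ after checking $P$ behaves well, and agreement with the original action on $X$ follows from $\widetilde\theta|_X = \theta$ and $P|_{\cI(X)} = \id$. Finally, completeness of the action as a map $A \otimes^h \cI(X) \to \cI(X)$: since the action factors through $A_1 \otimes^h \cI(X) \to \cI(X)$ and agrees with composition in $\mc{CB}(A_1, \cB(K))$, which is completely bounded on the Haagerup tensor product (the Haagerup tensor product is projective and composition $\mc{CB}(A_1,\cB(K)) \otimes^h A_1 \to \mc{CB}(A_1,\cB(K))$ is completely contractive), the $h$-module property is inherited.

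I expect the main obstacle to be exactly the circularity noted above: making the extended action land in $\cI(X)$ rather than in the ambient $\cB(K)$, and doing so in a way that is simultaneously associative. The resolution is to fix once and for all the injective-operator-space projection $P : \cB(K) \to \cI(X)$ (or onto the copy of $\mc{CB}(A_1,\cI(X))$ inside $\mc{CB}(A_1,\cB(K))$) extending $\id_{\cI(X)}$, define the action through $P$, and then verify the module axioms hold on the nose by a direct computation using that $P$ restricts to the identity on $\cI(X)$ and that the original action on $X$ is already associative — uniqueness of the extension, guaranteed by Theorem \ref{t:injenv}(2), is what forces the axioms through. Non-degeneracy of the resulting module follows since $X = \la A\cdot X\ra$ is weak*-dense (in the appropriate sense) and the action on $\cI(X)$ extends that on $X$.
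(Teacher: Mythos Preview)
Your approach has a genuine gap in the verification of associativity, and the detour through $\mc{CB}(A_1,\cB(K))$ and a projection $P$ does not repair it. Once you define $(a,\lambda)\cdot\zeta := P(\widetilde\theta(\zeta)\cdot(a,\lambda))$, you must show $(ab)\cdot\zeta = a\cdot(b\cdot\zeta)$, i.e.
\[
P\bigl(\widetilde\theta(\zeta)\cdot(ab)\bigr) \;=\; P\bigl(\widetilde\theta\bigl(P(\widetilde\theta(\zeta)\cdot b)\bigr)\cdot a\bigr).
\]
This would require $\widetilde\theta\circ P$ to act as the identity on the relevant images, but neither $P$ nor $\widetilde\theta$ is an $A$-module map (indeed $\cI(X)$ has no module structure yet), so there is no mechanism forcing this. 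Your appeal to ``uniqueness of the extension, guaranteed by Theorem~\ref{t:injenv}(2)'' is doubly problematic: that theorem concerns the $A$-injective envelope $\cI_A(X)$ rather than $\cI(X)$, and in any case rigidity of $\cI(X)$ applies only to complete contractions $\cI(X)\to\cI(X)$ restricting to $\id_X$, which you have not produced from the associativity comparison.

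The paper's argument avoids the ambient-module route entirely. It first reduces to unital $A$, then uses injectivity of the Haagerup tensor product to view $A\otimes^h X\subseteq A\otimes^h\cI(X)$ completely isometrically, and extends the module map $m:A\otimes^h X\to X\subseteq\cI(X)$ to a complete contraction $\tilde m:A\otimes^h\cI(X)\to\cI(X)$ by operator-space injectivity of $\cI(X)$; the action is $a\cdot\eta:=\tilde m(a\otimes\eta)$. The crucial idea you are missing is how associativity is then verified: for a \emph{unitary} $a\in A$, the map $\eta\mapsto a^{-1}\cdot(a\cdot\eta)$ is a complete contraction on $\cI(X)$ restricting to $\id_X$, hence equals $\id_{\cI(X)}$ by rigidity; similarly $\eta\mapsto (ab)^{-1}\cdot(a\cdot(b\cdot\eta))$ is the identity for unitaries $a,b$, which yields $(ab)\cdot\eta=a\cdot(b\cdot\eta)$. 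Since unitaries span a unital $C^*$-algebra, associativity follows for all $a,b$. It is precisely the invertibility of unitaries that manufactures the self-map needed for rigidity --- something your projection-based scheme cannot supply.
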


\proof
First assume $A$ is unital. Let $m: A \otimes_h X \to X$ be a complete contraction extending the module action. Since $\otimes_h$ is injective, $A \otimes_h X \subseteq A \otimes_h \cI(X)$. Thus $m$ extends to a complete contraction $\tilde m: A \otimes_h \cI(X) \to \cI(X)$. Define an action of $A$ on $\cI(X)$ by $a \cdot \eta = \tilde m(a \otimes \eta)$ for $a \in A$, $\eta \in \cI(X)$.

The only nontrivial property to check in order to prove that this is a module action is the identity $(ab) \cdot x = a \cdot (b \cdot x)$. Fix a unitary $a \in A$, and let $\varphi: \cI(X) \to \cI(X)$ be the map $\eta \mapsto a^{-1} \cdot (a \cdot \eta)$. Since $\varphi$ is a complete contraction restricting to the identity on $X$, we have $\varphi = \text{id}$ by rigidity. It follows similarly from rigidity that for unitaries $a, b \in A$, $\eta = (ab)^{-1} \cdot (a \cdot (b \cdot \eta))$. So \[(ab)\cdot \eta = (ab) \cdot ((ab)^{-1} \cdot (a \cdot (b \cdot \eta))) = a \cdot (b \cdot \eta)\] for all unitaries $a,b \in A$ and $\eta \in \cI(X)$. Since the unitaries in $A$ span $A$, the desired identity holds for general $a,b \in A$. Thus $\cI(X)$ admits an $A$-module structure extending that on $X$ for which $\cI(X)$ becomes an $h$-module over $A$. 

If $A$ is non-unital, then $X$ is canonically an $h$-module over the unitization $A_1$ \cite[3.1.11]{BLM}. So by the first part of the present proof, there is some $A_1$-module structure on $\cI(X)$ extending that on $X$ for which $\cI(X)$ is a $h$-module over $A_1$. Restricting to $A$ and using functoriality of the Haagerup tensor product gives the desired $A$-module structure on $\cI(X)$. 
\endproof

\begin{cor}\label{c:h}
If $X$ is a nondegenerate $h$-module over a $C^*$-algebra $A$, and $X$ has the $A$-WEP, then $X$ has the WEP. In particular, every $C^*$-algebra $A$ which has the $A$-WEP has the WEP.
\end{cor}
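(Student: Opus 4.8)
\textbf{Proof plan for Corollary \ref{c:h}.}

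The plan is to combine Proposition \ref{X h-mod implies I(X) hmod} with the factorization characterization of the $A$-WEP from Theorem \ref{A-WEP-eq-conds}(6), together with the fact that injective $h$-modules over a $C^*$-algebra are injective as operator spaces. First I would invoke Theorem \ref{A-WEP-eq-conds}, specifically the equivalence of (1) and (5): since $X$ has the $A$-WEP, there is an $A$-module embedding $\mathrm{i}:\cI_A(X)\hookrightarrow X^{**}$ restricting to the identity on $X$. So it suffices to produce a completely contractive (not necessarily module) map $\cI(X)\to\cI_A(X)$ restricting to the identity on $X$; composing with $\mathrm{i}$ then realizes $\cI(X)$ inside $X^{**}$ over the identity on $X$, which gives the operator space WEP.

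Next I would argue that $\cI_A(X)$, being the $A$-injective envelope, carries by Proposition \ref{X h-mod implies I(X) hmod} an $h$-module structure over $A$ extending that on $X$ (the hypothesis that $X$ is nondegenerate is what lets us apply that proposition). The key observation is that $\cI_A(X)$ is injective \emph{as an operator space}: indeed, $\cI_A(X)$ is a retract of $\mc{CB}(A_1,\cB(H))$ for a suitable $H$ (this is exactly how $A$-injective envelopes are constructed in the proof of Theorem \ref{t:injenv}), and hence one obtains a completely contractive projection of some $\cB(K)$ onto $\cI_A(X)$; alternatively, one can appeal directly to the fact that an $h$-module over a $C^*$-algebra that is $A$-injective is automatically injective as an operator space, via the same rigidity/unitary-spanning argument underlying Proposition \ref{X h-mod implies I(X) hmod}. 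Granting that $\cI_A(X)$ is operator-space injective, the inclusion $X\hookrightarrow\cI_A(X)$ extends the identity map $X\to X\hookrightarrow\cI_A(X)$ along the completely isometric inclusion $X\hookrightarrow\cI(X)$ to a complete contraction $\cI(X)\to\cI_A(X)$ restricting to the identity on $X$, as desired. This shows $X$ has the operator space WEP.

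For the ``in particular'' statement, take $A$ to be a $C^*$-algebra with the $A$-WEP; then $A$ is a nondegenerate $h$-module over itself (via multiplication, using an approximate identity for nondegeneracy), so the first part gives that $A$ has the operator space WEP, and by Kirchberg's observation recalled just before Lemma \ref{l:multiplier} (that the operator space WEP coincides with Lance's WEP for $C^*$-algebras), $A$ has Lance's WEP.

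The main obstacle I anticipate is the claim that $\cI_A(X)$ is injective as an operator space. The cleanest route is to track the construction of $\cI_A(X)$ as a retract of $\mc{CB}(A_1,\cB(H))$ and note that any operator-space retract of an injective operator space is injective; the one subtlety is that a priori the retraction from the proof of Theorem \ref{t:injenv} is an $A$-module map, but an $A$-module retraction is in particular an operator-space retraction, so injectivity as an operator space follows immediately. Once that point is secured, the rest is a routine diagram chase.
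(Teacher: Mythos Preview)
There is a genuine gap in your approach, and it lies precisely where you anticipated trouble. Your main route claims that $\cI_A(X)$ is injective as an operator space because it is an $A$-module retract of $\mc{CB}(A_1,\cB(H))$. But an $A$-module retract of an $A$-injective object is only guaranteed to be $A$-injective; to conclude operator-space injectivity you would need $\mc{CB}(A_1,\cB(H))$ itself to be injective as an operator space. That is exactly condition (3) of Theorem \ref{AWEP implies WEP char}, which is equivalent to $A^*$ being an injective operator space---and this fails for most infinite-dimensional $C^*$-algebras (e.g., $A=C_0(X)$ with $X$ infinite). So the retract argument does not go through. Your fallback, that ``an $h$-module over a $C^*$-algebra that is $A$-injective is automatically injective as an operator space via the same rigidity argument,'' is not what Proposition \ref{X h-mod implies I(X) hmod} says or proves; that proposition uses rigidity of $\cI(X)$, not $A$-rigidity of some $A$-injective object, and there is no evident way to reverse the roles. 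You have also misread Proposition \ref{X h-mod implies I(X) hmod}: it puts an $h$-module structure on the \emph{operator-space} injective envelope $\cI(X)$, not on $\cI_A(X)$.

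The paper's argument avoids this entirely by working in the opposite direction. Proposition \ref{X h-mod implies I(X) hmod} makes $\cI(X)$ an $A$-module, so the inclusion $X\hookrightarrow\cI(X)$ is a completely isometric morphism in $\modA$. The $A$-WEP then directly furnishes an $A$-morphism $\psi:\cI(X)\to X^{**}$ restricting to $i_X$ on $X$, and since $\psi$ is in particular a complete contraction, this already witnesses the operator space WEP. No claim about $\cI_A(X)$ is needed at all. The moral is that the role of Proposition \ref{X h-mod implies I(X) hmod} is to make $\cI(X)$ an admissible target for the $A$-WEP extension problem, not to upgrade $\cI_A(X)$ to something operator-space injective.
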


\proof
By Proposition \ref{X h-mod implies I(X) hmod}, $\cI(X)$ is an $h$-module over $A$. Hence $\cI(X) \in \textbf{Amod}$ by the comment above Proposition \ref{X h-mod implies I(X) hmod}. So if $X$ has the $A$-WEP, then there is a completely contractive ($A$-module) map $\psi: \cI(X) \to X^{**}$ that restricts to the inclusion on $X$.
\endproof

\begin{cor}\label{c:h2} A $C^*$-algebra $A$ has the $A$-WEP if and only if it has the WEP.
\end{cor}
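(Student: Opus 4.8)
The forward implication, ``$A$-WEP $\Rightarrow$ WEP'', is already contained in \corref{c:h}: multiplication $A\hten A\to A$ makes $A$ a non-degenerate $h$-module over itself, so the last assertion of \corref{c:h} applies. Thus the whole content is the converse, ``WEP $\Rightarrow$ $A$-WEP'', where throughout $A$ is viewed as a right operator $A$-module over itself via multiplication.

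The plan is as follows. By \cite{K}, $A$ has Lance's WEP iff it has the operator space WEP, and the latter is equivalent to the existence of a complete isometry $\theta\colon\cI(A)\hookrightarrow A^{**}$ with $\theta|_A=i_A$, where $\cI(A)$ denotes the operator space injective envelope of $A$ --- an injective $C^*$-algebra containing $A$ in the sense of Hamana. (Forward: apply the operator space WEP to the inclusion $A\hookrightarrow\cI(A)$ to obtain a complete contraction $\cI(A)\to A^{**}$ restricting to $i_A$ on $A$, and promote it to a complete isometry using rigidity of $\cI(A)$; backward: extend $i_A$ off $A^{**}$ using injectivity of the operator space $\cI(A)$.) Now apply \lemref{l:multiplier} to the $C^*$-inclusion $A\subseteq\cI(A)$ and the contraction $\theta$: this forces $\theta$ to be a completely positive $A$-bimodule map, hence in particular a morphism $\cI(A)\to A^{**}$ in $\modA$, where $\cI(A)$ carries its canonical operator $A$-module structure coming from its $C^*$-multiplication (which, by \propref{X h-mod implies I(X) hmod} applied to $X=A$, is the unique operator $A$-module structure on $\cI(A)$ extending that of $A$). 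Granting that $\cI(A)$ is itself $A$-injective, the composition
$$A\hookrightarrow\cI(A)\xrightarrow{\ \theta\ }A^{**}$$
exhibits $i_A$ as factoring through an $A$-injective module via $A$-morphisms, so $A$ has the $A$-WEP by \thmref{A-WEP-eq-conds}, $(6)\Rightarrow(1)$. (Slightly more is true: the inclusion $A\hookrightarrow\cI(A)$ is an $A$-essential and $A$-rigid extension --- these are inherited from the corresponding operator space properties, since $A$-morphisms are in particular completely contractive --- so $A$-injectivity of $\cI(A)$ identifies it with $\cI_A(A)$, and $\theta$ is precisely the embedding demanded by \thmref{A-WEP-eq-conds}(5).)

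The crux is therefore the claim that $\cI(A)$, with its multiplicative $A$-module structure, is $A$-injective. Since $\cI(A)$ is an injective operator space, the argument in the proof of \thmref{t:injenv} shows $\mc{CB}(A_1,\cI(A))$ is $A$-injective, so it suffices to prove $\cI(A)$ is relatively $A$-injective, i.e.\ that the canonical $A$-morphism $j_{\cI(A)}\colon\cI(A)\hookrightarrow\mc{CB}(A_1,\cI(A))$ admits an $A$-module left inverse. This is where the real work is: the obvious candidate $T\mapsto T(1)$ is \emph{not} $A$-linear --- the module action on $\mc{CB}(A_1,-)$ is carried by the domain, not the range --- so one needs a genuinely different construction, built from the $C^*$-algebra structure of $\cI(A)$ together with the approximate identity $(e_\lambda)$ of $A$ (one first checks, via rigidity, that $\sup_\lambda e_\lambda$ is the unit of $\cI(A)$) and the monotone completeness of $\cI(A)$ to take the relevant limits. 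Alternatively, and more efficiently, one invokes the known fact --- originating with Hamana and developed in the operator-module literature --- that $\cI(A)$ is the injective envelope of $A$ in the category of operator $A$-modules. I expect this single step to absorb essentially all the difficulty in the converse implication.
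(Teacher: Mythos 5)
Your proposal is correct and follows essentially the same route as the paper: the forward direction via \corref{c:h}, and the converse by factoring $i_A$ through $\cI(A)$, using \lemref{l:multiplier} to see that the weak expectation $\cI(A)\to A^{**}$ is an $A$-module map, and concluding from \thmref{A-WEP-eq-conds}(6) once $\cI(A)$ is known to be $A$-injective. The only place you stop short is the step you yourself flag as the crux: where you defer to the literature (Hamana, Frank--Paulsen) for the $A$-injectivity of $\cI(A)$, the paper proves it in a few lines --- extend the inclusion $A\subseteq\cB(H)$ to a map $\cI(A)\to\cB(H)$, which is completely isometric by rigidity; injectivity of $\cI(A)$ gives a completely contractive projection $\cB(H)\to\cI(A)$, which is an $A$-bimodule map by Tomiyama's theorem; and since $\cB(H)$ is $A$-injective by Wittstock's bimodule extension theorem, $\cI(A)$ inherits $A$-injectivity --- so your sketched alternative (building a module left inverse to $j_{\cI(A)}$ from the cai and monotone completeness) is unnecessary, and your parenthetical appeal to \propref{X h-mod implies I(X) hmod} for uniqueness of the module structure is harmless but not needed, since the Choi--Effros $C^*$-structure on $\cI(A)$ already supplies the module action used.
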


\begin{proof} One direction follows immediately from Corollary \ref{c:h}. Suppose $A$ has the WEP. Then the inclusion $A\hookrightarrow A^{**}$ factors through the injective envelope $\mc{I}(A)$, say through a completely positive contraction $E:\mc{I}(A)\rightarrow A^{**}$. Since $E$ is a weak expectation, it is an $A$-bimodule map by Lemma \ref{l:multiplier}. Moreover, $\mc{I}(A)$ an injective operator $A$-module (see \cite[pg. 60]{FP}). This follows from Wittstock's bimodule extension theorem \cite[Theorem 4.1]{Witt2} and Tomiyama's theorem \cite{T} on conditional expectations: any faithful inclusion $A\subseteq\BH$ lifts to a complete contraction $\vphi:\mc{I}(A)\rightarrow\BH$, which is automatically a complete isometry by rigidity. Thus, injectivity of $\mc{I}(A)$ and Tomiyama's theorem yield a completely contractive $A$-bimodule projection $P:\BH\rightarrow \cI(A)$. Since $\BH$ is $A$-injective by \cite[Theorem 4.1]{Witt2}, it follows that $\cI(A)$ is $A$-injective. Thus, the inclusion $A\hookrightarrow A^{**}$ factors through an $A$-injective module, implying $A$ has the $A$-WEP by Theorem \ref{A-WEP-eq-conds} (6).
\end{proof}

\begin{remark} Corollary \ref{c:h2} is the WEP analogue of \cite[Theorem 3.2]{FP} which states (in particular) that a unital $C^*$-algebra $A$ is injective if and only if it is $A$-injective.
\end{remark}

\subsection{The $A$-module $\mathbb C$}

Since a completely contractive $A$-module structure on an operator space $X$ is equivalent to a complete contraction $A \to \mc{CB}(X)$, there is a one-to-one correspondence between characters (i.e., multiplicative linear functionals) on $A$ and completely contractive $A$-module structures on $\mathbb C$. For a character $\varphi$ on $A$, denote by $\mathbb C_\varphi$ the space $\mathbb C$ with the corresponding $A$-module structure, i.e., $a \cdot z = \varphi(a) z$ for $a \in A$, $z \in \mathbb C$. It is natural to ask for conditions on $\varphi$ that are equivalent to the $A$-WEP of $\mathbb C_\varphi$, which by Example \ref{examps, incl dual mods} (2) is equivalent to $A$-injectivity of $\mathbb C_\varphi$.

The characterization below uses the notion of \emph{$\varphi$-amenability} due to Kaniuth, Lau, and Pym \cite{KLP}. For a character $\varphi$ on $A$, $A$ is said to be $\varphi$-amenable if there exists a bounded linear functional $m$ on $A^*$ such that $\langle m, \varphi \rangle = 1$ and $\langle m, f \cdot a \rangle = \varphi(a) \langle m, f \rangle$ for all $a \in A$ and $f \in A^*$. Such a functional $m$ is called a \emph{$\varphi$-mean}.

\begin{prop}
For a character $\varphi$ on $A$, if $\mathbb C_\varphi$ is $A$-injective, then $A$ is $\varphi$-amenable with a $\varphi$-mean of norm one. The converse holds if $A$ has a cai.
\end{prop}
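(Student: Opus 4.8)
The plan is to realize $\mathbb{C}_\varphi$ as an $A$-module retract of $\mc{CB}(A,\mathbb{C})\cong A^*$, where $A^*$ carries the dual action $(f\cdot a)(b)=f(ab)$ --- exactly the action appearing in the definition of $\varphi$-amenability --- and to observe that an $A$-module retraction of the canonical embedding $j_{\mathbb{C}_\varphi}:\mathbb{C}_\varphi\to A^*$ is precisely a $\varphi$-mean. Both implications then fall out, the nontrivial input for the converse being Lemma~\ref{l:CB(A,M) A-inj for dual M}.

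For the forward direction, suppose $\mathbb{C}_\varphi$ is $A$-injective. Since $\varphi$ is a character it is nonzero, so $\mathbb{C}_\varphi$ is faithful, and $A$-injectivity gives in particular relative injectivity; by the faithful version of relative injectivity there is an $A$-morphism $\Phi:\mc{CB}(A,\mathbb{C}_\varphi)\to\mathbb{C}_\varphi$ with $\Phi\circ j_{\mathbb{C}_\varphi}=\mathrm{id}$. Identifying $\mc{CB}(A,\mathbb{C}_\varphi)=A^*$, the map $j_{\mathbb{C}_\varphi}$ sends $z$ to $z\varphi$, so $\Phi(\varphi)=1$. Set $m:=\Phi\in A^{**}$, a contraction. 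Unravelling the module-map condition on $\Phi$ gives $\langle m,f\cdot a\rangle=\varphi(a)\langle m,f\rangle$ for all $f\in A^*$, $a\in A$, so $m$ is a $\varphi$-mean; and since every character on a Banach algebra has norm at most one, $1=|\langle m,\varphi\rangle|\le\|m\|\,\|\varphi\|\le\|m\|\le 1$, whence $\|m\|=1$.

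For the converse, assume $A$ has a cai and fix a $\varphi$-mean $m$ with $\|m\|=1$. As $\mathbb{C}$ is an injective dual operator space, Lemma~\ref{l:CB(A,M) A-inj for dual M} shows that $A^*=\mc{CB}(A,\mathbb{C})$ is $A$-injective. The canonical morphism $j_{\mathbb{C}_\varphi}:\mathbb{C}_\varphi\to A^*$, $z\mapsto z\varphi$, is a complete contraction, and $m$, regarded as a map $A^*\to\mathbb{C}_\varphi$, is a complete contraction (being a norm-one functional on an operator space) and an $A$-morphism (this is exactly the $\varphi$-mean condition), and it satisfies $m\circ j_{\mathbb{C}_\varphi}(z)=z\langle m,\varphi\rangle=z$. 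Hence $\mathbb{C}_\varphi$ is an $A$-module retract of the $A$-injective module $A^*$, and therefore $A$-injective: given any completely isometric morphism $Y\hookrightarrow Z$ and a morphism $\psi:Y\to\mathbb{C}_\varphi$, one extends $j_{\mathbb{C}_\varphi}\circ\psi$ to a morphism $Z\to A^*$ using $A$-injectivity of $A^*$ and then post-composes with $m$.

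I do not expect a real obstacle: once the identification $\mc{CB}(A,\mathbb{C})=A^*$ (with matching module actions) and the dictionary ``$\varphi$-mean $=$ $A$-module retraction of $j_{\mathbb{C}_\varphi}$'' are in place, the proof is bookkeeping. The point worth flagging is the role of the cai hypothesis in the converse: it is used only to invoke Lemma~\ref{l:CB(A,M) A-inj for dual M}, which supplies the $A$-injectivity of $A^*$; without a cai, $A^*$ need not be $A$-injective, so the hypothesis is genuinely needed. A minor technical check is that the general module action on $\mc{CB}(A,\mathbb{C})$ coincides with the action $f\cdot a$, $(f\cdot a)(b)=f(ab)$, used in the definition of $\varphi$-amenability.
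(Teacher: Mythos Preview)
Your proof is correct and follows essentially the same approach as the paper: both directions hinge on the identification of a norm-one $\varphi$-mean with a completely contractive $A$-module retraction of the embedding $\mathbb{C}_\varphi\hookrightarrow A^*$, $z\mapsto z\varphi$, together with Lemma~\ref{l:CB(A,M) A-inj for dual M} to supply $A$-injectivity of $A^*$ in the converse. The only cosmetic difference is that in the forward direction you route through relative injectivity (using faithfulness of $\mathbb{C}_\varphi$ and the $\mc{CB}(A,-)$ formulation), whereas the paper applies $A$-injectivity directly to the inclusion $i:\mathbb{C}_\varphi\to A^*$; your detour is arguably slightly more careful, since it does not need $i$ itself to be completely isometric.
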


\proof
Suppose that $\mathbb C_\varphi$ is $A$-injective. Define a map $i: \mathbb C_\varphi \to A^*$ by $i(z) = z \varphi$. Then $i$ is completely contractive since $\varphi$ is contractive, and
\[ \langle i(a \cdot z), b \rangle = \varphi(a)z\varphi(b) = z \langle \varphi, ba \rangle = \langle a i(z), b \rangle \] for all $a, b \in A$, $z \in \mathbb C$. So $i$ is an $A$-morphism. By assumption, there is an $A$-morphism $m: A^* \to \mathbb C$ such that $m \circ i  = \text{id}_\mathbb C$. Then $\|m\| \leq 1$, $\langle m, \varphi \rangle = \langle m,i(1) \rangle = 1$, and for any $f \in A^*$ and $a \in A$, $\langle m, fa \rangle = a \langle m,f \rangle = \varphi(a) \langle m,f \rangle$.

For the converse, assume that $A$ has a cai and that $A$ is $\varphi$-amenable with a $\varphi$-mean of norm one. Let $i: \mathbb C_\varphi \to A^*$ be as above. By similar calculations to those above, any norm-one $\varphi$-mean $m: A^* \to \mathbb C_\varphi$ is an $A$-morphism such that $m \circ i = \text{id}_\mathbb C$. Then $\mathbb C_\varphi$ is $A$-injective since, by Lemma \ref{l:CB(A,M) A-inj for dual M}, $A^*$ is $A$-injective.
\endproof

\section{The $G$-WEP}

In \cite{BEW}, Buss, Echterhoff and Willett introduced a notion of $G$-WEP for $C^*$-dynamical systems over locally compact groups $G$: a $G$-$C^*$-algebra $A$ has the $G$-WEP if for any $G$-equivariant inclusion $A\subseteq B$ into another $G$-$C^*$-algebra $B$, there exists a $G$-equivariant completely positive contraction $E:B\rightarrow A^{**}$ which restricts to the identity on $A$. In this subsection, we show that this notion coincides with the $L^1(G)$-WEP. Useful tools in this regard are the following two lemmas.

\begin{lem}\label{l:flat}
Let $A$ be a completely contractive Banach algebra with a (two-sided) contractive approximate identity, and let $X\in\modA$. Then there exists a completely contractive $A$-module map $\varphi: X \to \la X\cdot A\ra^{**}$ such that $\varphi|_{\la X\cdot A\ra} = i_{\la X\cdot A\ra}$. Hence $\la X\cdot A\ra\hookrightarrow X$ is a flat inclusion.
\end{lem}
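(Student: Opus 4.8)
The plan is to use a two-sided contractive approximate identity $(e_\lambda)$ for $A$ to produce $\varphi$ as a point-weak* cluster point of the natural truncation maps. Write $X_e:=\la X\cdot A\ra$ and note $X_e\in\modA$ as a closed submodule of $X$. For each $\lambda$ define $\varphi_\lambda\in\mc{CB}(X,X_e)$ by $\varphi_\lambda(x)=x\cdot e_\lambda$; this is a complete contraction because the module action $X\pten A\to X$ is, and $x\cdot e_\lambda\in X\cdot A\subseteq X_e$. Composing with the canonical complete isometry $i_{X_e}:X_e\hookrightarrow X_e^{**}$ yields a net $(i_{X_e}\circ\varphi_\lambda)$ in the unit ball of $\mc{CB}(X,X_e^{**})\cong(X\pten X_e^*)^*$. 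Since this ball is weak*-compact, the net has a weak* cluster point $\varphi$, which is automatically a complete contraction; concretely, $\la\varphi(x),f\ra$ is a cluster value of the scalar net $\big(\la i_{X_e}(x\cdot e_\lambda),f\ra\big)_\lambda$ for every $x\in X$, $f\in X_e^*$.

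Next I would verify the two required properties of $\varphi$. For $x\in X_e$ one has $\|x\cdot e_\lambda-x\|\to 0$: this is immediate for $x\in X\cdot A$ from $ae_\lambda\to a$, and then follows for all of $X_e$ by the usual $\varepsilon/3$ argument using $\|\varphi_\lambda\|\le 1$; since a cluster value of a norm-convergent net is its limit, $\varphi|_{X_e}=i_{X_e}$. For the module property, fix $x\in X$, $a\in A$, $f\in X_e^*$ and compute, using the bidual right $A$-module structure on $X_e^{**}$,
$$\la\varphi(x)\cdot a,f\ra=\la\varphi(x),a\cdot f\ra,$$
which is a cluster value of $\la i_{X_e}(x\cdot e_\lambda),a\cdot f\ra=\la f,x\cdot(e_\lambda a)\ra$. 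Since $e_\lambda a\to a$, this scalar net converges to $\la f,x\cdot a\ra$, so $\la\varphi(x)\cdot a,f\ra=\la f,x\cdot a\ra=\la\varphi(x\cdot a),f\ra$, the last equality because $x\cdot a\in X_e$ and $\varphi|_{X_e}=i_{X_e}$. As $f$ was arbitrary, $\varphi(x\cdot a)=\varphi(x)\cdot a$. Both one-sided halves of the approximate identity enter here: $ae_\lambda\to a$ for the restriction, $e_\lambda a\to a$ for the module identity.

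Finally, to obtain flatness of $\kappa:X_e\hookrightarrow X$, I would dualize: $\vphi:=\varphi^*\circ i_{X_e^*}:X_e^*\to X^*$ is a completely contractive $A$-module map, and $\kappa^*\circ\vphi=\id_{X_e^*}$ since $(\varphi\circ\kappa)^*=i_{X_e}^*$ together with $i_{X_e}^*\circ i_{X_e^*}=\id_{X_e^*}$. Thus $\kappa$ is a weak retract, and by the operator-space analogue of \cite[1.9]{CLM} (used already in the proof of $(2)\Leftrightarrow(7)$ in Theorem \ref{A-WEP-eq-conds}) the inclusion $X_e\hookrightarrow X$ is flat.

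The main obstacle is not any single hard estimate but the bookkeeping: one must track which of the left-versus-right $A$-module structures on $X_e$, $X_e^*$, $X_e^{**}$, $X_e^{***}$ is in play at each step, confirm that $i_{X_e^*}$ is a left-module embedding so that $\vphi$ is genuinely a morphism in $\Amod$, and invoke the correct one-sided convergence of the approximate identity in each of the two verifications above.
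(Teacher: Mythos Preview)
Your proof is correct and follows essentially the same approach as the paper: define $\varphi_\lambda(x)=x\cdot e_\lambda$, pass to a weak* cluster point in $\mc{CB}(X,\la X\cdot A\ra^{**})$, and deduce flatness via the weak-retract argument from Theorem~\ref{A-WEP-eq-conds}. The paper leaves the verifications of $\varphi|_{\la X\cdot A\ra}=i_{\la X\cdot A\ra}$ and the module property as ``straightforward''; your explicit checks, including the observation that the right half of the cai is used for the restriction and the left half for the module identity, fill in exactly those details.
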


\begin{proof}
Let $(e_\alpha)$ be a (two-sided) contractive approximate identity for $ A$. Then for each $\alpha$, 
$$R_{e_{\alpha}}:X\ni x\mapsto x\cdot e_{\alpha}\in \la X\cdot A\ra$$
is completely contractive, so the composition $\varphi_\alpha := i_{\la X\cdot A\ra} \circ R_{e_{\alpha}}: X \to \la X\cdot A\ra^{**}$ is completely contractive. Let $\varphi$ be a limit point of $(\varphi_\alpha)$ in the weak*-topology of $\mc{CB}(X,\la X \cdot A \ra^{**})$. Then $\varphi$ is completely contractive, and it is straightforward to check that $\varphi$ is an $A$-module map with $\varphi|_{\la X\cdot A\ra} = i_{\la X\cdot A\ra}$.

The last statement follows as in the proof of $(2)\Leftrightarrow(7)$ in Theorem \ref{A-WEP-eq-conds}.
\end{proof}

\begin{lem}\label{l:essAWEP}
Let $A$ be a completely contractive Banach algebra with a (two-sided) contractive approximate identity, and let $X\in\modA$ be essential (i.e., $\la X \cdot A \ra = X$). Then $X$ has the $A$-WEP if and only if for every completely isometric morphism $\kappa: X \hookrightarrow Y$ into an essential $A$-module $Y$, there exists a morphism $\varphi: Y \to X^{**}$ such that $\varphi \circ \kappa = i_X$. 
\end{lem}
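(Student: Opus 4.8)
The ``only if'' direction is immediate from Definition \ref{d:AWEP}: a module with the $A$-WEP admits a weak expectation for \emph{every} completely isometric morphism into an $A$-module, in particular for those into essential modules. So the plan concerns the converse, and the idea is to reduce an arbitrary inclusion to one into an essential module. Assume the stated condition holds, and let $\kappa:X\hookrightarrow Y$ be an arbitrary completely isometric morphism in $\modA$. Set $Y_0:=\la Y\cdot A\ra$. First I would record two routine observations: (i) $Y_0$ is essential --- using a right contractive approximate identity $(e_\alpha)$ of $A$, each generator $y\cdot a$ of $Y_0$ is $\lim_\alpha(y\cdot a)\cdot e_\alpha\in\la Y_0\cdot A\ra$; and (ii) $\kappa(X)\subseteq Y_0$ --- since $X$ is essential, $\kappa(X)=\kappa(\la X\cdot A\ra)\subseteq\la\kappa(X)\cdot A\ra\subseteq Y_0$. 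Thus $\kappa$ factors as $\iota\circ\kappa'$ with $\kappa':X\hookrightarrow Y_0$ a completely isometric morphism into the essential module $Y_0$ and $\iota:Y_0\hookrightarrow Y$ the inclusion. The hypothesis, applied to $\kappa'$, then furnishes a morphism $\psi_0:Y_0\to X^{**}$ with $\psi_0\circ\kappa'=i_X$.

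Next I would transport $\psi_0$ across $\iota$. By Lemma \ref{l:flat} applied to $Y\in\modA$ (note $\la Y\cdot A\ra=Y_0$), the inclusion $\iota$ is flat, hence a weak retract by the equivalence $(2)\Leftrightarrow(7)$ of Theorem \ref{A-WEP-eq-conds}; that is, there is a morphism $\rho:Y_0^*\to Y^*$ with $\iota^*\circ\rho=\id_{Y_0^*}$. Dualizing $\psi_0$ exactly as in the proof of $(1)\Rightarrow(2)$ of Theorem \ref{A-WEP-eq-conds}, the map $\tilde\varphi:=\psi_0^*|_{X^*}:X^*\to Y_0^*$ is a morphism with $(\kappa')^*\circ\tilde\varphi=\id_{X^*}$. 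Then $\rho\circ\tilde\varphi:X^*\to Y^*$ is a morphism, and since $\kappa=\iota\circ\kappa'$,
\[\kappa^*\circ(\rho\circ\tilde\varphi)=(\kappa')^*\circ\iota^*\circ\rho\circ\tilde\varphi=(\kappa')^*\circ\tilde\varphi=\id_{X^*}.\]
By $(2)\Rightarrow(1)$ of Theorem \ref{A-WEP-eq-conds}, $X$ has the $A$-WEP.

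An alternative, working entirely at the level of modules rather than duals: with $\varphi_0:Y\to Y_0^{**}$ the map provided by Lemma \ref{l:flat} (so $\varphi_0|_{Y_0}=i_{Y_0}$), set $\psi:=\pi_X\circ\psi_0^{**}\circ\varphi_0$, where $\pi_X:=(i_{X^*})^*:X^{****}\to X^{**}$ is the canonical norm-one $A$-module projection, which satisfies $\pi_X\circ i_{X^{**}}=\id_{X^{**}}$; a diagram chase using naturality of the canonical embeddings gives $\psi\circ\kappa=i_X$. In either presentation the only genuine input is Lemma \ref{l:flat}: it is precisely what allows a weak expectation living on the essential submodule $Y_0$ to be extended to all of $Y$. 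I do not anticipate a real obstacle here --- the remaining steps are bookkeeping with adjoints and canonical embeddings --- so the ``hard part'', such as it is, is recognizing that flatness of $Y_0\hookrightarrow Y$ is the right tool.
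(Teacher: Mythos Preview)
Your proposal is correct and essentially matches the paper's proof. In fact, your ``alternative'' presentation---taking $\varphi_0:Y\to Y_0^{**}$ from Lemma \ref{l:flat}, extending $\psi_0$ to $Y_0^{**}\to X^{**}$ via $\pi_X\circ\psi_0^{**}$ (equivalently $(\psi_0^*|_{X^*})^*$), and composing---is exactly what the paper does; your first presentation is just the dual reformulation of the same argument, routed through the weak-retract language rather than the module-level maps.
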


\begin{proof}
The forward direction is obvious. For the converse, let $\kappa: X \hookrightarrow Y$ be an inclusion of $A$-modules. Then $\kappa(X) \subseteq \la Y \cdot A \ra$ since $X$ is essential. So there exists a morphism $\psi: \la Y \cdot A \ra \to X^{**}$ such that $\varphi \circ \kappa = i_X$, and $\psi$ extends to a morphism $\la Y \cdot A \ra^{**} \to X^{**}$. Composing this map with the morphism $Y \to \la Y \cdot A \ra^{**}$ guaranteed by Lemma \ref{l:flat} yields the desired map $Y \to X^{**}$.
\end{proof}

Let $(M,G,\alpha)$ be a $W^*$-dynamical system. Then $M$ is canonically a module over $M(G)$ via the following weak*-convergent integral:
\begin{equation}\label{e:int} x \ast \mu=\int_G \alpha_{s^{-1}}(x) \ d\mu(s), \ \ \ x\in M, \ \mu\in M(G).\end{equation}
The continuous part of $M$,
\[ M^c := \{x \ast f : f \in L^1(G), x \in M\},\]
coincides with the set $\{x \in M : t \mapsto \alpha_t(x) \text{ is norm-continuous}\}$, and is a $G$-$C^*$-algebra. Moreover, for each $x\in M^c$, the integral (\ref{e:int}) is norm convergent. $M^c$ is a module over $M(G)$ and an essential module in $\mathbf{mod}L^1(G)$. These facts are well-known (see \cite{Ped} or \cite{Ham11}, for example). 

\begin{prop}\label{p:G-WEP} Let $(A,G,\alpha)$ be a $C^*$-dynamical system. Then $A$ has the $G$-WEP if and only if it has the $\LO$-WEP.
\end{prop}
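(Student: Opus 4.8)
The plan is to prove the two implications by comparing the two relevant categories of modules. The forward direction ($G$-WEP $\Rightarrow$ $\LO$-WEP) is the one requiring care, since an arbitrary $\LO$-module inclusion $A \hookrightarrow Y$ need not live in the world of $G$-$C^*$-algebras. First I would use Lemma~\ref{l:essAWEP}: since $A$ is an essential $\LO$-module and $\LO$ has a two-sided contractive approximate identity, it suffices to test the $\LO$-WEP against completely isometric $\LO$-module morphisms $\kappa: A \hookrightarrow Y$ with $Y$ essential. The idea is then to embed such a $Y$ into (the continuous part of) a $G$-$C^*$-algebra in an $\LO$-equivariant way, and to make $\kappa$ become a $G$-equivariant $*$-homomorphism. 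Concretely, I would pass to the second dual: $A^{**}$ and $Y^{**}$ carry normal extensions of the module actions, but more usefully one can dilate. Recall from the preliminaries that for a $C^*$-dynamical system the universal $W^*$-dynamical system $(A_\alpha'', G, \overline\alpha)$ and, more relevantly, the bidual picture, relate the continuous part and the full $A^{**}$. The cleanest route: embed $Y$ into $\BH$ for some Hilbert space $H$, and then use the $\LO$-module structure on $Y$ to produce a $G$-action --- but a general essential $\LO$-module is not a $C^*$-algebra. So instead I would reduce to the case where $Y$ itself is (or embeds equivariantly into) a $G$-$C^*$-algebra by a standard trick: the inclusion $A \hookrightarrow Y$ of $\LO$-modules, with $A$ a $G$-$C^*$-algebra, can be replaced by the inclusion $A \hookrightarrow \mc{CB}(\LO, Y^{**})$-type module, but I need the target to be a $G$-$C^*$-algebra. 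The honest approach is: since $A$ has the $G$-WEP as a $G$-$C^*$-algebra, and $A^{**}$ with $\widetilde\alpha$ sits as a $G$-$W^*$-algebra, I would show that any essential $\LO$-module inclusion $A \hookrightarrow Y$ can be ``completed'' to a $G$-equivariant inclusion of $C^*$-algebras by considering the $C^*$-algebra generated by $A$ and a suitable copy of $C_0(G)$ (or $VN(G)$) acting on $Y$ --- this is where the crossed-product machinery enters.

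For the reverse direction ($\LO$-WEP $\Rightarrow$ $G$-WEP) the situation is easier, because every $G$-equivariant inclusion $A \subseteq B$ of $G$-$C^*$-algebras is in particular a completely isometric $\LO$-module morphism (the $\LO$-action being obtained by slicing $\alpha$, which is natural in the equivariant $C^*$-category). So the $\LO$-WEP gives a completely contractive $\LO$-module map $\psi: B \to A^{**}$ with $\psi|_A = i_A$. The only thing to check is that $\psi$ is automatically $G$-equivariant and completely positive. Complete positivity follows from Lemma~\ref{l:multiplier}, since $\psi$ is a contraction $B \to A^{**}$ restricting to the identity on $A$, hence a completely positive $A$-bimodule contraction. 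For $G$-equivariance I would argue that being an $\LO$-module map forces equivariance after passing to the continuous parts: the $G$-action on $A$ and on $B$ is determined by the $M(G)$-module structure extending the $\LO$-module structure (via the contractive approximate identity of $\LO$, point masses are weak* limits in $M(G)$), and a complete contraction intertwining the $\LO$-actions and restricting to the identity on $A$ must intertwine the $M(G)$-actions on the continuous parts, hence the $\alpha_t$. One subtlety: $\psi$ lands in $A^{**}$, not $A^c$, so I would need to invoke the normal cover $\widetilde\alpha$ on $A^{**}$ and check $\psi$ is equivariant for $(\widetilde\alpha_t)_t$ --- but this follows since the $\LO$-module structure on $A^{**}$ is by definition obtained by slicing $\widetilde\alpha$, and $\psi$ being an $\LO$-module map into $A^{**}$ forces compatibility with $\widetilde\alpha$; then one restricts back appropriately. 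Actually, since the definition of $G$-WEP asks for $E: B \to A^{**}$ equivariant for the action on $A^{**}$ (extending $\alpha$), this is exactly what the $\LO$-module map gives.

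The main obstacle is the forward direction's reduction step: producing, from an arbitrary essential $\LO$-module $Y$ containing $A$, an ambient $G$-$C^*$-algebra into which the picture embeds $G$-equivariantly, so that the $G$-WEP hypothesis applies. I expect the right move is to not work with $Y$ directly but to use Theorem~\ref{A-WEP-eq-conds}(6): it suffices to factor $i_A: A \to A^{**}$ through an $\LO$-injective module. Now the $\LO$-injective envelope $\cI_{\LO}(A)$ carries (one hopes) a $G$-$C^*$-algebra structure --- indeed injective operator modules in this setting should be of the form $\mc{CB}(\LO_1, \BH)$ or corners thereof, and these are $G$-$C^*$-algebras via the natural $G$-action. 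So the strategy becomes: show $\cI_{\LO}(A)$ is (equivariantly completely order isomorphic to) a $G$-injective operator system / $G$-$C^*$-algebra, note that $A \hookrightarrow \cI_{\LO}(A)$ is then a $G$-equivariant inclusion, apply the $G$-WEP of $A$ to get a $G$-equivariant u.c.p.\ map $\cI_{\LO}(A) \to A^{**}$ restricting to the identity on $A$, observe this is an $\LO$-module map (equivariance plus the slicing definition of the $\LO$-actions), and conclude via Theorem~\ref{A-WEP-eq-conds}(6). I would carry out the steps in this order: (i) identify $\cI_{\LO}(A)$ with a concrete $G$-injective object and record its $G$-$C^*$/operator-system structure; (ii) check the inclusion $A \hookrightarrow \cI_{\LO}(A)$ is $G$-equivariant; (iii) apply $G$-WEP to produce the equivariant weak expectation into $A^{**}$; (iv) verify it is an $\LO$-module map; (v) invoke Theorem~\ref{A-WEP-eq-conds}(6); and separately (vi) handle the easy converse via Lemma~\ref{l:multiplier} and the naturality of the $\LO$-structure on equivariant $C^*$-inclusions.
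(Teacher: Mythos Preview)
Your reverse direction ($\LO$-WEP $\Rightarrow$ $G$-WEP) is essentially the paper's argument. The paper makes the $G$-equivariance step completely explicit via the one-line computation
\[
E(\alpha_s(b\ast f))=E((b\ast f)\ast\delta_s)=E(b\ast(f\ast\delta_s))=E(b)\ast(f\ast\delta_s)=\alpha_s(E(b\ast f)),
\]
and then invokes essentiality of $B$; this is cleaner than your weak*-limit-of-point-masses argument, but the content is the same. Lemma~\ref{l:multiplier} supplies complete positivity exactly as you say.

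For the forward direction ($G$-WEP $\Rightarrow$ $\LO$-WEP) your final strategy has a real gap. You propose to factor through $\cI_{\LO}(A)$ and then apply the $G$-WEP hypothesis to the inclusion $A\hookrightarrow\cI_{\LO}(A)$. But the $G$-WEP, as defined, applies only to $G$-equivariant inclusions of $G$-$C^*$-algebras, and you have not established that $\cI_{\LO}(A)$ carries a $C^*$-algebra structure with a continuous $G$-action for which the inclusion of $A$ is an equivariant $*$-homomorphism. Your phrase ``one hopes'' is exactly the missing step; proving this would essentially require Hamana's theory of $G$-injective envelopes together with an identification of the two notions of injective envelope, which is both non-trivial and circular relative to the logical order of the paper (Proposition~\ref{p:G-inj} comes after).

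The paper avoids this entirely by replacing the abstract $\cI_{\LO}(A)$ with a concrete $\LO$-injective object whose $G$-$C^*$-structure is manifest. Fix a faithful representation $A\subseteq\BH$. By Lemma~\ref{l:CB(A,M) A-inj for dual M}, $\mc{CB}(\LO,\BH)=\BH\oten\LI$ is $\LO$-injective, so after reducing to essential $Y$ via Lemma~\ref{l:essAWEP} (as you suggest), the canonical $\LO$-embedding $j:A\to\mc{CB}(\LO,\BH)$ extends to an $\LO$-morphism $\psi:Y\to\mc{CB}(\LO,\BH)$. Since $Y$ is essential, $\psi(Y)$ lands in the continuous part $B:=\la\mc{CB}(\LO,\BH)\cdot\LO\ra$, which \emph{is} a $G$-$C^*$-algebra (the norm-continuous part of a $G$-$W^*$-algebra), and $j:A\to B$ is a $G$-equivariant $*$-homomorphism. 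Now the $G$-WEP applies directly to give $\rho:B\to A^{**}$ with $\rho\circ j=i_A$, and $\varphi:=\rho\circ\psi$ is the desired $\LO$-weak expectation. You were circling this when you mentioned $\mc{CB}(\LO_1,\BH)$ ``or corners thereof'', but the key move you missed is passing to the continuous part rather than to an abstract envelope.
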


\begin{proof} Suppose $A$ has the $\LO$-WEP and that $A\subseteq B$ is an inclusion of $G$-$C^*$-algebras. Then there exists an $\LO$-morphism $E:B\rightarrow A^{**}$ restricting to the identity on $A$. For any $f\in\LO$, $s\in G$, and $b\in B$ we have
$$E(\alpha_{s}(b\ast f))=E((b\ast f)\ast\delta_s)=E(b\ast(f\ast\delta_s))=E(b)\ast(f\ast\delta_s)=\alpha_s(E(b)\ast f)= \alpha_s(E(b \ast f)).$$
Since $B$ satisfies $\la B\ast\LO\ra = B$, it follows that $E$ is a $G$-equivariant complete contraction. By Lemma \ref{l:multiplier} $E$ is necessarily a completely positive contraction, whence $A$ has the $G$-WEP.

Conversely, suppose that $A$ has the $G$-WEP. Let $Y\in\mathbf{mod}\LO$ and $\kappa:A\hookrightarrow Y$ be a completely isometric $\LO$-morphism. We will show that there exists an $L^1(G)$-morphism $\varphi: Y \to A^{**}$ such that $\varphi \circ \kappa = i_A$. By Lemma \ref{l:essAWEP}, we may assume $Y$ is essential. Let $A \subseteq \BH$ be a faithful $*$-representation. Since $A \in \mathbf{mod}\LO$ is essential and $L^1(G)$ has a contractive approximate identity, it follows that the canonical $*$-homomorphic $L^1(G)$-morphism $j: A \to \mc{CB}(L^1(G),\BH)$ is completely isometric. By Lemma \ref{l:CB(A,M) A-inj for dual M}, $\mc{CB}(L^1(G),\BH)$ is $L^1(G)$-injective. So there exists an $L^1(G)$-morphism $\psi: Y \to \mc{CB}(L^1(G),\BH)$ such that $\psi \circ \kappa = j$. Since $Y$ is essential, $\psi(Y) \subseteq B$, where $B = \la \mc{CB}(L^1(G),\BH) \cdot L^1(G) \ra$. Since $\mc{CB}(L^1(G),\BH) = \BH \overline{\otimes} L^\infty(G)$ is a $G$-$W^*$-algebra, $B$ is a $G$-$C^*$-algebra containing a $*$-homomorphic copy of $A$. So there is a $G$-equivariant complete contraction, which must also be an $L^1(G)$-morphism, $\rho: B \to A^{**}$ such that $\rho \circ j = i_A$. Then $\varphi := \rho \circ \psi: Y \to A^{**}$ is an $L^1(G)$-morphism, and $\varphi \circ \kappa = \rho \circ \psi \circ \kappa = \rho \circ j = i_A$.
\end{proof}

In a similar fashion, we have the analogous result for injectivity. Here, we say that $G$-$C^*$-algebra is $G$-injective if it is injective in the category of $G$-$C^*$-algebras and completely positive $G$-equivariant contractions, denoted $\mathbf{G}$-$\mathbf{C^*}$-$\mathbf{alg}$.

\begin{prop}\label{p:G-inj} Let $(A,G,\alpha)$ be a $C^*$-dynamical system. Then $A$ is $G$-injective if and only if it is $\LO$-injective.
\end{prop}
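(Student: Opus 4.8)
The plan is to mirror the proof of Proposition \ref{p:G-WEP} almost verbatim, replacing ``$A^{**}$'' with ``$A$'' throughout and ``$G$-WEP''/``$\LO$-WEP'' with ``$G$-injective''/``$\LO$-injective''. For the forward implication, suppose $A$ is $G$-injective. Given a completely isometric $\LO$-morphism $\kappa: X\hookrightarrow Y$ in $\modLOQ$ and an $\LO$-morphism $\varphi: X\to A$, we must produce an $\LO$-morphism extending $\varphi$ over $\kappa$. By Lemma \ref{l:essAWEP} (or rather, the evident analogue of its proof for injectivity, using Lemma \ref{l:flat}) we may reduce to the essential case. Then, exactly as in Proposition \ref{p:G-WEP}, embed $Y$ into the $G$-$C^*$-algebra $B = \la\mc{CB}(\LO,\cB(H))\cdot\LO\ra$ (which is an essential submodule of the $G$-$W^*$-algebra $\cB(H)\oten\LI$), using $\LO$-injectivity of $\mc{CB}(\LO,\cB(H))$ from Lemma \ref{l:CB(A,M) A-inj for dual M}; the resulting $\LO$-morphism on $B$ is automatically $G$-equivariant (it intertwines convolution by $\delta_s$), and then $G$-injectivity of $A$ supplies a $G$-equivariant completely positive contraction $B\to A$ restricting appropriately, which is an $\LO$-morphism. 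Composing gives the desired extension.

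For the converse, suppose $A$ is $\LO$-injective, and let $A\subseteq B$ be a $G$-equivariant inclusion of $G$-$C^*$-algebras together with a completely positive $G$-equivariant contraction from $A$ to a target — more precisely, fix $C\in\mathbf{G}\text{-}\mathbf{C^*}\text{-}\mathbf{alg}$, a $G$-embedding $C\hookrightarrow D$, and a $G$-equivariant c.c.p.\ map $\varphi: C\to A$; we want a $G$-equivariant c.c.p.\ extension $D\to A$. Every $G$-$C^*$-algebra is canonically an essential object of $\modLOQ$, and every $G$-equivariant c.c.p.\ map between them is an $\LO$-morphism (it intertwines the convolution action \eqref{e:int} restricted to $\LO$). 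Hence $\varphi$ is an $\LO$-morphism and $C\hookrightarrow D$ is a completely isometric $\LO$-morphism, so $\LO$-injectivity of $A$ yields an $\LO$-morphism $\tilde\varphi: D\to A$ extending $\varphi$. It remains to see $\tilde\varphi$ is $G$-equivariant and completely positive. $G$-equivariance follows from the $\LO$-module identity $\tilde\varphi(d\ast(f\ast\delta_s)) = \tilde\varphi(d\ast f)\ast\delta_s$ together with $\la D\ast\LO\ra = D$, exactly as in the displayed computation in Proposition \ref{p:G-WEP}. Complete positivity then follows: once $\tilde\varphi$ is known to be a $G$-equivariant complete contraction restricting to the c.c.p.\ map $\varphi$ on the $C^*$-subalgebra (and in the relevant situation where $C$ contains a unit or where one works inside a faithful representation), one invokes Lemma \ref{l:multiplier}, or directly the fact that a completely contractive unital map is completely positive, to conclude. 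Finally one notes that $G$-injectivity is tested against the single ``universal'' inclusion into $\mc{CB}(\LO,\cB(H))$ or its $C^*$-analogue, so no essential generality is lost.

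The main obstacle is bookkeeping around positivity and the passage between ``$\LO$-morphism'' and ``$G$-equivariant c.c.p.\ map.'' In the WEP proof this was painless because the target $A^{**}$ is a von Neumann algebra and Lemma \ref{l:multiplier} applies to a weak expectation $B\to A^{**}$; here the target is merely the $C^*$-algebra $A$, so one must be slightly more careful to arrange that the extension lands where a Tomiyama/Wittstock-type argument (or unitality) forces positivity. The cleanest route is to factor everything through $\cI(A)$ or through $\cB(K)$ as in the proof of Corollary \ref{c:h2}: realize the $G$-injective object $A$ concretely inside an injective $G$-$W^*$-algebra, extend as a complete contraction there by injectivity of the ambient von Neumann algebra, then project back using a $G$-equivariant conditional expectation, which is automatically c.c.p. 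I expect this positivity step to be the only place requiring genuine care; the module-theoretic skeleton transfers directly from Proposition \ref{p:G-WEP}.
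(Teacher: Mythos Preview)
Your approach mirrors the paper's closely. For $G$-injective $\Rightarrow$ $\LO$-injective, the paper embeds into $\mc{CB}(\LO_1,\BH)$ (using the unitization rather than your reduction to essential modules via Lemma \ref{l:flat}) and retracts by $G$-injectivity from the continuous part; the two variants amount to the same thing. For $\LO$-injective $\Rightarrow$ $G$-injective, the paper also extends a general $G$-map $\vphi:B\to A$ to an $\LO$-morphism $\widetilde\vphi:C\to A$, obtains $G$-equivariance by the $\delta_s$ trick, and then simply invokes Lemma \ref{l:multiplier} for complete positivity.

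Your instinct that positivity is the one genuine issue is correct, and your hesitation about Lemma \ref{l:multiplier} is well-founded: that lemma concerns retractions $E:B\to A^{**}$ with $E|_A=\id_A$, not arbitrary completely contractive extensions of c.c.p.\ maps, so it does not literally cover $\widetilde\vphi$ as written. Your suggested fixes (factor through $\cI(A)$, or ``test against a single universal inclusion'') point in the right direction but do not close the gap as stated. The clean resolution is to stop trying to extend an arbitrary $\vphi$ and instead reduce $G$-injectivity to a single retraction problem, where Tomiyama does the work. Use $\LO$-injectivity of $A$ to produce an $\LO$-left inverse $P$ to the $*$-homomorphic embedding $j_A:A\hookrightarrow\mc{CB}(\LO,\BH)\cong\LI\oten\BH$; then $j_A\circ P$ is a contractive projection onto the $C^*$-subalgebra $j_A(A)$, hence a conditional expectation by Tomiyama, so $P$ is c.c.p. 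Restricted to the continuous part $B=(\LI\oten\BH)^c$ it is also $G$-equivariant by the $\delta_s$ argument. Since $B$ itself is $G$-injective (direct Hamana-style argument: given $C\hookrightarrow D$ and a $G$-map $\psi:C\to B$, extend $\mathrm{ev}_e\circ\psi:C\to\BH$ to a c.c.p.\ map $\widetilde\psi:D\to\BH$ by Arveson and set $\Psi(d)(s)=\widetilde\psi(\alpha_{s^{-1}}(d))$), $A$ is a $G$-equivariant c.c.p.\ retract of a $G$-injective object and therefore $G$-injective. This is what both your sketch and the paper's brief citation are implicitly relying on.
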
 

\begin{proof} Suppose $A$ has is injective in $\mathbf{mod}\LO$ and that $B\subseteq C$ is an inclusion of $G$-$C^*$-algebras. If $\vphi:B\rightarrow A$ is a morphism in $\mathbf{G}$-$\mathbf{C^*}$-$\mathbf{alg}$, then by norm convergence in the $C^*$-analogue of (\ref{e:int}), $\vphi$ is an $\LO$-module map. By $\LO$-injectivity there exists an $\LO$-morphism $\widetilde{\vphi}:C\rightarrow A$ extending $\vphi$. By Lemma \ref{l:multiplier} $\widetilde{\vphi}$ is necessarily a completely positive contraction, and as above it is $G$-equivariant. Whence $A$ is $G$-injective.

Conversely, if $A$ is $G$-injective, and $\iota:X\hookrightarrow Y$ is an inclusion in $\mathbf{mod}\LO$, and $\vphi:X\rightarrow A$ is an $\LO$-morphism. Let $A\subseteq\BH$ be a faithful inclusion and $j_A:A\hookrightarrow\mc{CB}(\LO_1,\BH)$ the canonical embedding. By \cite[Proposition 2.3]{C} (which is inspired by \cite[Lemma 2.2]{Ham85}) $\mc{CB}(\LO_1,\BH)$ is $\LO$-injective. Thus, there exists an $\LO$-morphism $\widetilde{\vphi}:Y\rightarrow\mc{CB}(\LO_1,\BH)$ extending $j_A\circ\vphi$. Since 
$$\mc{CB}(\LO_1,\BH)=(\LO_1\pten\Th)^*=(\LI\oplus\mathbb{C}1)\oten\BH$$
is a $G$-$W^*$-algebra, its continuous part $\mc{CB}(\LO_1,\BH)^c$ is a $G$-$C^*$-algebra. Moreover, $j_A(A)\subseteq \mc{CB}(\LO_1,\BH)^c$. By $G$-injectivity there is a $G$-equivariant completely positive contraction $\Phi:\mc{CB}(\LO_1,\BH)^c\rightarrow A$ satisfying $\Phi\circ j_A=\id_A$. As above, $\Phi$ is necessarily an $\LO$-morphism, and the composition $\Phi\circ\widetilde{\vphi}:Y\rightarrow A$ is the desired $\LO$-morphism extending $\vphi$.

\end{proof}

In \cite[Definition 8.1]{BEW3}, Buss, Echterhoff and Willett introduced a notion of \textit{continuous $G$-WEP} for a $C^*$-dynamical system $(A,G,\alpha)$: for any $G$-equivariant inclusion $A\subseteq B$ into a $G$-$C^*$-algebra $B$, there exists a completely positive $G$-equivariant contraction $E:B\rightarrow A_\alpha''$ for which $E|_A$ is the canonical inclusion $A\subseteq A_\alpha''$. By \cite[Lemma 8.1]{BEW3}, the $G$-WEP implies the continuous $G$-WEP. We now establish the converse.

\begin{prop}\label{p:4.5} Let $(A,G,\alpha)$ be a $C^*$-dynamical system. Then $A$ has the $G$-WEP if and only if it has the continuous $G$-WEP.
\end{prop}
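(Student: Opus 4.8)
The plan is to deduce the $G$-WEP from the continuous $G$-WEP; the converse implication is \cite[Lemma 8.1]{BEW3}, and in any case is immediate, since $\mathrm{Ad}(z)\colon A^{**}\to A_\alpha''$ is a $G$-equivariant normal unital $*$-homomorphism restricting to the canonical inclusion $A\hookrightarrow A_\alpha''$, so that composing it with a weak expectation $B\to A^{**}$ produces a continuous one $B\to A_\alpha''$.

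So suppose $A$ has the continuous $G$-WEP. By Proposition~\ref{p:G-WEP} it suffices to show that $A$ has the $\LO$-WEP. First I would record the module-theoretic reformulation of the continuous $G$-WEP, proved exactly as Proposition~\ref{p:G-WEP}: $A$ has the continuous $G$-WEP if and only if for every completely isometric $\LO$-morphism $\kappa\colon A\hookrightarrow Y$ there is an $\LO$-morphism $\varphi\colon Y\to A_\alpha''$ with $\varphi\circ\kappa$ equal to the canonical map $\iota\colon A\to A_\alpha''$. The nontrivial direction reduces to essential $Y$ via Lemma~\ref{l:essAWEP}, uses the $\LO$-isometric $*$-homomorphic embedding $A\hookrightarrow\mc{CB}(\LO,\BH)$ (which is $\LO$-injective by Lemma~\ref{l:CB(A,M) A-inj for dual M}) for a faithful representation $A\subseteq\BH$, passes to the continuous part of the $W^*$-dynamical system $\mc{CB}(\LO,\BH)=\LI\,\oten\,\BH$ where the continuous $G$-WEP applies, and pushes the result back to $Y$ using Lemma~\ref{l:flat} together with the fact that $A_\alpha''$ is a dual $\LO$-module. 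Applying this with $Y$ an $\LO$-injective module containing $A$ (for instance $Y=\mc{CB}(\LO,\BH)$), I obtain an $\LO$-morphism $\varphi\colon Y\to A_\alpha''$ extending $\iota$.

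The main obstacle — and the place where I expect all the real work to lie — is the final step: converting this into an $\LO$-morphism $\Psi\colon Y\to A^{**}$ extending $i_A$, which by Theorem~\ref{A-WEP-eq-conds}(6) gives the $\LO$-WEP. Equivalently, one wants a $G$-equivariant complete contraction $A_\alpha''\to A^{**}$ restricting to $i_A$ on the copy of $A$. The difficulty is that $A_\alpha''=zA^{**}$ is merely a \emph{quotient} of $A^{**}$ via the $G$-equivariant $*$-homomorphism $\mathrm{Ad}(z)$ (where $z\in Z(A^{**})$ is the $\widetilde{\alpha}$-fixed central projection), and its evident completely contractive splitting, the inclusion of the corner $zA^{**}\subseteq A^{**}$, restricts on $A$ to $a\mapsto za$ rather than to $a\mapsto a$; the missing summand $a\mapsto(1-z)a$ takes values in the singular part $(1-z)A^{**}$, on which the bidual action is not point-weak* continuous and which need not be injective. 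Since $\iota$ is an isometric $*$-isomorphism onto its image, $a\mapsto(1-z)a$ is contractive on $\iota(A)$, so the task is to extend it $G$-equivariantly and completely contractively from $\iota(A)$ to all of $A_\alpha''$ with values in $(1-z)A^{**}$; I would attempt this by a further application of the continuous $G$-WEP of $A$ together with Lemma~\ref{l:multiplier} for complete positivity, or, alternatively, by identifying the $\LO$-injective envelopes of $A$, $A_\alpha''$, and $A^{**}$ and invoking the rigidity of Theorem~\ref{t:injenv}. Assembling the corner inclusion with such a correction term yields the required $\Psi$ and completes the proof.
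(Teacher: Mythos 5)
Your reduction is set up sensibly, and you have correctly isolated the crux: everything comes down to producing a single completely contractive $\LO$-morphism $\Phi:A_\alpha''\to A^{**}$ (for the bidual module structure on $A^{**}$, i.e.\ the one obtained by slicing $\widetilde{\alpha}$) which restricts to $i_A$ on the canonical copy of $A$. But you never construct this map, and the ``corner inclusion plus correction term'' scheme you sketch cannot work as stated. The proposed correction $\Psi_2:A_\alpha''\to(1-z)A^{**}$, $za\mapsto(1-z)a$, takes values in $(1-z)A^{**}=\mathrm{Ker}(\widetilde{\alpha})$, on which the $\LO$-action is zero; so if $\Psi_2$ were an $\LO$-morphism it would satisfy $\Psi_2(\iota(a\ast f))=\Psi_2(\iota(a))\ast f=0$, and since $\la A\ast\LO\ra=A$ this forces $\Psi_2|_{\iota(A)}=0$, contradicting $\Psi_2(za)=(1-z)a$ whenever $(1-z)A\neq 0$. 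If you only demand equivariance for the bidual $G$-action, this does not recover the $\LO$-module property (the bidual action is not point-weak* continuous on the singular part), so the assembled map need not be an $\LO$-morphism and neither Theorem \ref{A-WEP-eq-conds}(6) nor the definition of the $\LO$-WEP applies. Note also that the corner inclusion $zA^{**}\subseteq A^{**}$ is itself not an $\LO$-morphism (for $a\in A$, $f\in\LO$ one gets $z(a\ast f)$ on one side and $a\ast f$ on the other---this is exactly the paper's remark that $A_\alpha''$ is not an $\LO$-submodule of $A^{**}$), so the two summands cannot be handled separately anyway; and ``a further application of the continuous $G$-WEP'' cannot help, since that hypothesis only produces maps \emph{into} $A_\alpha''$, never maps out of it. The appeal to injective envelopes and rigidity is likewise not developed into an argument.

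The missing ingredient is supplied in the paper by an explicit averaging construction: take a contractive approximate identity $(f_i)$ of states in $\LO$, let $m\in\LI^*$ be a weak* cluster point of a subnet, and set $\Phi(x)=(m\ten\id)\widetilde{\alpha}(x)$ for $x\in A^{**}$. Asymptotic centrality of $(f_{i_j})$ makes $\Phi$ a unital completely positive $\LO$-morphism; since $\mathrm{Ker}(\widetilde{\alpha})=(1-z)A^{**}\subseteq\mathrm{Ker}(\Phi)$ it descends to a ucp $\LO$-morphism $A_\alpha''\to A^{**}$; and norm continuity of $s\mapsto\alpha_s(a)$ for $a\in A$ gives $\Phi(za)=\lim_j a\ast f_{i_j}=a$. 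With this $\Phi$ in hand, the forward direction is quick: given the continuous weak expectation $E:B\to A_\alpha''$ one composes to get $\Phi\circ E:B\to A^{**}$ and checks $G$-equivariance using essentiality of $B$ as in Proposition \ref{p:G-WEP} (no module-theoretic reformulation of the continuous $G$-WEP is needed). Your converse direction via $\mathrm{Ad}(z)$ is fine and agrees with the cited \cite[Lemma 8.1]{BEW3}.
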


\begin{proof} First, we construct a unital completely positive $\LO$-module map $A_\alpha''\rightarrow A^{**}$ which restricts to the identity on $A$. Let $\widetilde{\alpha}:A^{**}\rightarrow\LI\oten A^{**}$ be the normal cover of the non-degenerate representation $\alpha:A\rightarrow M(C_0(G)\iten A)\subseteq\LI\oten A^{**}$, let $(f_i)$ be a contractive approximate identity for $\LO$ consisting of states, and let $m\in\LI^*$ be a weak* limit of a subnet $(f_{i_j})$. The map
$$\Phi:A^{**}\ni a\mapsto (m\ten\id)\widetilde{\alpha}(a)\in A^{**},$$
is a unital completely positive $\LO$-morphism, the latter property following from the asymptotic centrality of $(f_{i_j})$: for every $a\in A^{**}$, $f\in\LO$ and $\mu\in A^*$,
\begin{align*}\la\Phi(a\ast f),\mu\ra&=\lim_{j}\la a\ast f,f_{i_j}\ast\mu\ra=\lim_{j}\la a,f\ast f_{i_j}\ast\mu\ra=\lim_{j}\la a,f_{i_j}\ast f\ast\mu\ra\\
&=\lim_{j}\la a\ast f_{i_j},f\ast\mu\ra=\la\Phi(a),f\ast\mu\ra\\
&=\la\Phi(a)\ast f,\mu\ra.
\end{align*}
By definition of $\Phi$, it follows that $(A^*_{c})^{\perp}=(1-z)A^{**}\subseteq\mathrm{Ker}(\Phi)$, so we obtain an induced unital completely positive $\LO$-morphism (still denoted) $\Phi:A_\alpha''\rightarrow A^{**}$. Then for every $a\in A$ and $\mu\in A^*$,
$$\la\Phi(za),\mu\ra=\la(m\ten\id)\widetilde{\alpha}(za),\mu\ra=\la(m\ten\id)\alpha(a),\mu\ra=\lim_{j}\la a\ast f_{i_j},\mu\ra=\la a,\mu\ra,$$
where the last equality follows from continuity of $s\mapsto \alpha_s(a)$. 

Now, suppose $A$ has the continuous $G$-WEP and $B$ is a $G$-$C^*$-algebra for which $A\subseteq B$. Then there exists a completely positive $G$-equivariant contraction $E:B\rightarrow A_\alpha''$ which restricts to the inclusion $A\subseteq A_\alpha''$. By norm convergence of (\ref{e:normconv}) it follows that $E$ is $\LO$-equivariant. Then $\Phi\circ E:B\rightarrow A^{**}$ is a completely positive $\LO$-morphism which restricts to the identity on $A$. By $\LO$-essentiality of $B$, the same argument from the proof of Proposition \ref{p:G-WEP} shows that $\Phi\circ E$ is $G$-equivariant. Hence, $A$ has the $G$-WEP.
\end{proof}

A $C^*$-algebra is said to have the QWEP if it is a quotient of a $C^*$-algebra with the WEP. In \cite{K}, Kirchberg conjectured that every $C^*$-algebra has the QWEP. Until recently, this conjecture stood as one of the most important open problems in operator algebra theory, where it was shown to be false \cite{jietal}. In the setting of $G$-$C^*$-algebras, we can simply show that the $G$-equivariant analogue of the QWEP conjecture is false. Say a $G$-$C^*$-algebra has the \emph{$G$-QWEP} if it is the image under a $G$-equivariant $*$-homomorphism of a $G$-C$^*$-algebra with the $G$-WEP.

\begin{prop}
Let $G$ be a non-amenable locally compact group. Then no $C^*$-algebra with a $G$-invariant state has the $G$-QWEP. In particular, $C^*(G)$ does not have the $G$-QWEP.
\end{prop}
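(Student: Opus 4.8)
The plan is to derive a contradiction from the assumption that a $G$-$C^*$-algebra $A$ with a $G$-invariant state $\mu$ has the $G$-QWEP, by producing from this data a $G$-invariant mean on $L^\infty(G)$ (equivalently, a left-invariant mean, contradicting non-amenability of $G$). So suppose $\pi:C\twoheadrightarrow A$ is a $G$-equivariant surjective $*$-homomorphism with $C$ having the $G$-WEP. By Proposition~\ref{p:G-WEP}, $C$ has the $\LO$-WEP, so by Theorem~\ref{A-WEP-eq-conds}(5) (or directly from the definition) there is an $\LO$-morphism $\psi: \cI_{\LO}(C)\hookrightarrow C^{**}$ restricting to $i_C$; more to the point, the inclusion $C\hookrightarrow C^{**}$ factors through an $\LO$-injective module. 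The first step is to transport the $G$-invariant state to this picture.

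\textbf{Step 1: Build an $\LO$-morphism into $\mathbb{C}$.} The $G$-invariant state $\mu$ on $A$ pulls back along $\pi$ to a $G$-invariant state $\mu\circ\pi$ on $C$. Viewing $\mathbb{C}$ as the trivial $\LO$-module $\mathbb{C}_\epsilon$ (where $\epsilon$ is the augmentation character $f\mapsto \int_G f$), $G$-invariance says precisely that $\mu\circ\pi: C\to\mathbb{C}_\epsilon$ is an $\LO$-module map: indeed $\langle \mu\circ\pi, c\ast f\rangle = \int_G \langle\mu\circ\pi,\alpha_{s^{-1}}(c)\rangle\,f(s)\,ds = \langle\mu\circ\pi,c\rangle\int_G f = \epsilon(f)\langle\mu\circ\pi,c\rangle$, using $G$-invariance of $\mu\circ\pi$ and norm convergence of the defining integral of the module action.

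\textbf{Step 2: Extend to an injective module and then to $L^\infty(G)$.} Since $C$ has the $\LO$-WEP, the inclusion $i_C: C\hookrightarrow C^{**}$ factors as $C\xrightarrow{\iota} I \xrightarrow{\rho} C^{**}$ with $I$ an $\LO$-injective module. Embed $C$ completely isometrically and $G$-equivariantly into a $G$-$W^*$-algebra $N$ of the form $\BH\oten L^\infty(G)$ (for instance via the covariant representation $\alpha$, realizing $C\subseteq \mc{CB}(\LO,\BH)=\BH\oten\LI$ as in the proof of Proposition~\ref{p:G-WEP}); this is an $\LO$-module inclusion. By $\LO$-injectivity of $I$, the map $\iota: C\to I$ extends to an $\LO$-morphism $N\to I$, and composing with $\rho$ and with the $\LO$-morphism $\mu\circ\pi: C\to\mathbb{C}_\epsilon$ extended to $C^{**}\to\mathbb{C}_\epsilon$ (the double-adjoint extension, still an $\LO$-morphism and still equal to $\epsilon$ on scalars)... actually, more cleanly: since $\mathbb{C}_\epsilon$ itself need not be $\LO$-injective, I instead use that $N\supseteq C$ is an $\LO$-module extension and extend the $\LO$-morphism $\mu\circ\pi: C\to \mathbb{C}_\epsilon$ directly once we know it factors through something injective. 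The cleanest route: the composite $C\xrightarrow{\mu\circ\pi}\mathbb{C}_\epsilon$ factors through $C^{**}$, which by $\LO$-WEP contains an $\LO$-copy of $\cI_{\LO}(C)$; so we get an $\LO$-morphism $\cI_{\LO}(C)\to\mathbb{C}_\epsilon$ restricting (along $\iota$) to $\mu\circ\pi$. Now embed $\cI_{\LO}(C)$ inside $N = \BH\oten\LI$ as an $\LO$-module (possible since $\cI_{\LO}(C)$ is $\LO$-injective, hence a retract of $N$ once $C\subseteq N$), and extend the morphism $\cI_{\LO}(C)\to\mathbb{C}_\epsilon$ to $N\to\mathbb{C}_\epsilon$ using injectivity of... no. The honest obstacle here is that $\mathbb{C}_\epsilon$ is $\LO$-injective if and only if $G$ is amenable (by Proposition following Theorem~\ref{AWEP implies WEP char}, $\epsilon$-amenability of $\LO$ is amenability of $G$). \emph{That is the whole point}: we use the extension differently.

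\textbf{Step 3 (the real argument): Restrict to $L^\infty(G)$ directly.} Here is the clean plan. Since $C$ has the $\LO$-WEP and $1\in C\subseteq M(C)$, the unital inclusion $\bC 1\hookrightarrow C$ composed with $C\hookrightarrow N=\BH\oten\LI$ realizes $\LI$ (more precisely $1\oten\LI\subseteq N$) as sitting over a common module. Take a faithful $G$-equivariant representation with $C\subseteq N := \mc{CB}(\LO,\BH)=\BH\oten\LI$; inside $N$ the subalgebra $1\oten\LI$ is a $G$-$C^*$-algebra (the $G$-action being translation), and $1\oten L^\infty(G)\supseteq \bC(1\oten 1)$ is generated by the identity of $C$. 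Now $\mu\circ\pi$ extends (by $G$-WEP of $C$, applied to the inclusion $C\subseteq D$ where $D=\la N^c\cdot\LO\ra$ is the $G$-$C^*$-algebra from the proof of Proposition~\ref{p:G-WEP}, together with the $G$-equivariant c.p.\ map $D\to C^{**}$) to a $G$-equivariant state $\nu$ on $D$; restricting $\nu$ to $1\oten C_b(G)\subseteq D$... and since $\nu$ is $G$-invariant and the $G$-action on $C_b(G)\subseteq D$ is by \emph{left} translation, $\nu|_{C_b(G)}$ is a left-invariant mean on $C_b(G)$, hence $G$ is amenable — contradiction. For $C^*(G)$: the trivial representation $1_G: C^*(G)\to\bC$ is a $G$-invariant state (with $G$ acting trivially on $C^*(G)$ through its canonical coaction — here one must check that the relevant $G$-action on $C^*(G)$ fixes $1_G$, which holds because the action is inner/dual-trivial in the appropriate sense), so $C^*(G)$ carries a $G$-invariant state and the general statement applies.

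\textbf{Main obstacle.} The delicate point is setting up the right $G$-$C^*$-algebra $D$ containing both $C$ and a translation-$G$-copy of a commutative algebra like $C_b(G)$ or $L^\infty(G)$, applying the $G$-WEP (not merely $\LO$-WEP) of $C$ to get a \emph{$G$-equivariant} completely positive extension $D\to C^{**}$, and then composing with $\mu\circ\pi^{**}: C^{**}\to\bC$ — which is $G$-invariant as a normal extension of a $G$-invariant state — to land a genuinely $G$-invariant state on $D$, whose restriction to the commutative translation-copy is an invariant mean. The equivariance of the extension is exactly what the $G$-WEP (equivalently $\LO$-WEP, by Proposition~\ref{p:G-WEP}) buys us, and checking that the $G$-action restricted to the auxiliary commutative subalgebra is genuinely (left-)translation — so that an invariant state there is an invariant mean — is where the hypotheses must be used carefully.
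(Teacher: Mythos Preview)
Your Step 3 contains a correct argument, though the presentation is cluttered with the abandoned Steps 1--2 and a few imprecisions. The core line---embed $C$ $G$-equivariantly into the continuous part $D$ of $\mc{CB}(\LO,\BH)\cong\BH\oten\LI$, apply the $G$-WEP of $C$ to get a $G$-equivariant completely positive contraction $E:D\to C^{**}$, set $\nu:=(\mu\circ\pi)^{**}\circ E$, and restrict $\nu$ to the translation copy of functions on $G$ sitting inside $D$---does produce a left-invariant mean and hence the desired contradiction. Two corrections are needed: (i) $1\oten C_b(G)$ is not contained in the continuous part $D$; only $1\oten LUC(G)$ is, but a left-invariant mean on $LUC(G)$ still forces amenability. (ii) Your discussion of the $G$-action on $C^*(G)$ is garbled (``acting trivially through its canonical coaction'' does not parse); the intended action is the inner conjugation action via $G\hookrightarrow M(C^*(G))$, under which the trivial representation is indeed $G$-invariant. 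You should also state explicitly why $\nu$ is a state on the unital algebra $D$: it is positive and has norm one since $\nu|_C=\mu\circ\pi$.

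The paper's route is genuinely different. Rather than building an explicit ambient $G$-$C^*$-algebra containing a translation copy of $LUC(G)$, the paper works entirely with injective envelopes: from the $\LO$-WEP of $B$ one has $\cI_{\LO}(B)\hookrightarrow B^{**}$; a short multiplicative-domain argument shows $1_{B^{**}}\in\cI_{\LO}(B)$, giving an $\LO$-embedding $\mathbb{C}\hookrightarrow\cI_{\LO}(B)$ and hence $\cI_{\LO}(\mathbb{C})\hookrightarrow\cI_{\LO}(B)\subseteq B^{**}$; composing with $\varphi^{**}\circ q^{**}:B^{**}\to\mathbb{C}$ yields an $\LO$-morphism $\cI_{\LO}(\mathbb{C})\to\mathbb{C}$ restricting to the identity on $\mathbb{C}$, so by rigidity $\cI_{\LO}(\mathbb{C})=\mathbb{C}$, i.e., $\mathbb{C}$ is $\LO$-injective, which immediately gives an invariant state on $\LI$. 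Your approach is more hands-on and avoids the injective-envelope machinery; the paper's approach is more conceptual, uses rigidity in place of your explicit embedding, and handles the unit issue by a direct argument rather than a norm computation.
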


\proof
Let $A$ be a $C^*$-algebra admitting a $G$-invariant state $\varphi$, and suppose for contradiction that $q: B \to A$ is a $G$-equivariant surjective $*$-homomorphism, where $B$ is a $G$-$C^*$-algebra with the $G$-WEP. Then there is a $G$-equivariant copy of $\cI_{L^1(G)}(B)$ in $B^{**}$ such that the inclusion $\cI_{L^1(G)}(B) \hookrightarrow B^{**}$ restricts to the canonical inclusion of $B$. Note that $1_{B^{**}} \in \cI_{L^1(G)}(B)$. (Indeed, if $\Phi: B^{**} \to \cI_{L^1(G)}(B)$ is a complete contraction such that $\Phi_B = \text{id}_B$, then each $b$ in $B$ is in the multiplicative domain of $\Phi$, so $\Phi(1_{B^{**}})b = \Phi(b)=b$. Thus $\Phi(1_{B^{**}}) = 1_{B^{**}}$.) So $\mathbb C \hookrightarrow \cI_{L^1(G)}(B)$ via $z \mapsto z 1_{B^{**}}$, which implies the existence of an $L^1(G)$-embedding $\cI_{L^1(G)}(\mathbb C) \hookrightarrow \cI_{L^1(G)}(B) \subseteq B^{**}$. Composing this inclusion with $\varphi^{**} \circ q^{**}: B^{**} \to \mathbb C$ yields an $L^1(G)$-module map $\cI_{L^1(G)}(\mathbb C) \to \mathbb C$ restricting to the identity on $\mathbb C$. By $L^1(G)$-rigidity, $\cI_{L^1(G)}(\mathbb C) = \mathbb C$, so that $\mathbb C$ is $L^1(G)$-injective. However, this would give the existence of a $G$-invariant state $L^\infty(G) \to \mathbb C$, thus contradicting non-amenability of $G$.
\endproof

The amenable radical of a locally compact group $G$ is the unique amenable normal subgroup of $G$ containing all other amenable normal subgroups of $G$. By \cite{KenRaum}, $C^*_\lm(G)$ admits a tracial state if and only if the amenable radical of $G$ is open (which of course holds in particular for discrete groups). Since a tracial state on $C^*_\lm(G)$ is necessarily $G$-invariant, we get the following as a corollary.

\begin{cor}
If $G$ is a non-amenable locally compact group with open amenable radical, then $C^*_\lm(G)$ does not have the $G$-QWEP.
\end{cor}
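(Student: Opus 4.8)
The plan is to obtain this as an immediate consequence of the previous proposition, the only genuine input being the Kennedy--Raum characterization of traciality for reduced group $C^*$-algebras. First I would fix the $G$-action under which $C^*_\lm(G)$ is regarded as a $G$-$C^*$-algebra, namely the adjoint action $\alpha_s = \Ad(\lm(s))$. Since the amenable radical of $G$ is open, \cite{KenRaum} furnishes a tracial state $\tau$ on $C^*_\lm(G)$. The one (routine) observation is that $\tau$ is automatically invariant for this action: for $s\in G$ and $x\in C^*_\lm(G)$ the trace property gives $\tau(\alpha_s(x)) = \tau(\lm(s)x\lm(s)^*) = \tau(x\lm(s)^*\lm(s)) = \tau(x)$. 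Thus $C^*_\lm(G)$ is a $C^*$-algebra carrying a $G$-invariant state, and since $G$ is non-amenable, the previous proposition shows directly that $C^*_\lm(G)$ does not have the $G$-QWEP.

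There is no real obstacle: the corollary is immediate once the tracial (hence $G$-invariant) state has been produced, so all the substance lies in \cite{KenRaum}. The only thing worth a moment's care is ensuring that the $G$-action entering the definition of $G$-QWEP is the adjoint action considered above (equivalently, checking \emph{that} a tracial state is invariant for whichever action one equips $C^*_\lm(G)$ with); since the proof of the preceding proposition only uses the invariant state to build a $G$-equivariant embedding of $\cI_{L^1(G)}(\mathbb{C})$ into the bidual, any $G$-invariant state suffices, and the adjoint action is the natural choice here.
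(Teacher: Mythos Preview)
Your proposal is correct and follows essentially the same approach as the paper: invoke \cite{KenRaum} to produce a tracial state on $C^*_\lm(G)$, observe that tracial states are automatically invariant for the inner action, and apply the preceding proposition. The paper's argument is the same, just stated more tersely in the paragraph preceding the corollary.
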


\subsection{The discrete group case}\label{s:gWEP}

Throughout this section $\Gamma$ is a discrete group, and by a $\Gamma$-$C^*$-algebra we mean a unital $C^*$-algebra $A$ on which $\Gamma$ acts by $*$-automorphisms. By a $\Gamma$-map we always mean a $\Gamma$-equivariant unital completely positive (ucp) map.

If $ A$ is a $\Gamma$-$C^*$-algebra, then as noted above there is a canonical action of $\ell^1(\Gamma)$ on $ A$ such that $ A$ with this action belongs to $\ell^1(\Gamma)\textbf{mod}$. Clearly, in this setting, $\Gamma$-maps and unital $\ell^1(\Gamma)$-morphisms coincide. It follows verbatim from Proposition \ref{p:G-inj} that a $\Gamma$-$C^*$-algebra $A$ is injective in $\ell^1(\Gamma)\textbf{mod}$ if and only if it is injective in the category of $\Gamma$-$C^*$-algebras with $\Gamma$-maps, the only difference being we consider unital morphisms. Similarly, by Proposition \ref{p:G-WEP} the $\ell^1(\Gamma)$-WEP for $A$ coincides with the analogous notion in the category of $\Gamma$-$C^*$-algebras with $\Gamma$-maps. Thus, we say that a $\Gamma$-$C^*$-algebra $ A$ has the $\Gamma$-WEP if it has $\ell^1(\Gamma)$-WEP.

Let $ A$ be a $C^*$-algebra. The $C^*$-algebra $\ell^\infty(\Gamma,  A)$ of bounded $ A$-valued functions on $\Gamma$ turns into a $\Gamma$-$C^*$-algebra (or a $\Gamma$-von Neumann algebra if $ A$ is a von Neumann algebra) with the action
\[
(t\cdot f)(s) = f(t^{-1}s)\quad\quad \left(s, t\in \Gamma \text{ and } f\in \ell^\infty(\Gamma,  A)\right) .
\]
The map $\iota: \ell^\infty(\Gamma)\to \ell^\infty(\Gamma,  A)$ defined by
\begin{equation}\label{linf-embed}
\iota(f)(s) = f(s) \mathds{1}_ A
\end{equation}
is a $\Gamma$-equivariant $C^*$- (or von Neumann-)embedding. In this setup, if $\varphi:  A_1 \to  A_2$ is a ucp map between $C^*$-algebras, then $\widetilde\varphi:\ell^\infty(\Gamma,  A_1) \to \ell^\infty(\Gamma,  A_2)$, $f \mapsto \varphi\circ f$ is a $\Gamma$-map. If $ A$ is injective, then $\ell^\infty(\Gamma,A)$ is injective in $\ell^1(\Gamma){\rm \textbf{mod}}$. The latter claim follows from a proof similar to that of Hamana's in \cite[Lemma 2.2]{Ham85}, where this statement is proved for slightly different categories. We won't need the following, but we note that the above works in more general setup. If $X \subseteq \cB(H)$ is an operator space, then $\ell^\infty(\Gamma,X)$ is an $\ell^1(\Gamma)$-submodule of $\ell^\infty(\Gamma,\cB(H))$. So in particular, $\ell^\infty(\Gamma,X)$ is in $\ell^1(\Gamma)\textbf{mod}$. If $X$ is injective, then a proof similar to the one mentioned above in \cite[Lemma 2.2]{Ham85} shows that $\ell^\infty(\Gamma,X)$ is injective in $\ell^1(\Gamma)\textbf{mod}$.

If $ A$ is a $\Gamma$-$C^*$-algebra and $\pi: A\to \cB(H)$ is a faithful nondegenerate $*$-representation, then the map
\begin{equation} \label{G-emb of A in ell^infty(G,B(H))}
j_\pi:  A\to \ell^\infty(\Gamma, \cB(H)), \quad j_\pi(a)(t) = \pi(t^{-1}a),
\end{equation}
for $a\in A$ and $t\in \Gamma$, is a $\Gamma$-embedding. Note that $\overline{j_\pi( A)}^{\text{weak*}}\subseteq \ell^\infty\left(\Gamma, \overline{\pi( A)}^{\text{weak*}}\right)$, where the weak*-closures are taken in $\ell^\infty(\Gamma, \cB(H))$, and $\cB(H)$, respectively.

In the following, we use $\cI_\Gamma( A)$ to denote Hamana's $\Gamma$-injective envelope of a $\Gamma$-$C^*$-algebra $ A$ (see \cite{Ham85}). This is the injective envelope of $ A$ in the category of $\Gamma$-operator systems with $\Gamma$-equivariant u.c.p.\ maps.

Some of the proofs below also use the injective envelope $\cI( A)$ of a $C^*$-algebra $ A$, and in particular, the fact that $ A$ has the WEP if and only if there is a completely isometric embedding $\cI( A) \hookrightarrow  A^{**}$ fixing elements in $ A$ (this is attributed to Blackadar in \cite[Introduction]{Paulsen11}).

\begin{thm} \label{G-WEP chars}
If $ A$ is a $\Gamma$-$C^*$-algebra, the following are equivalent:
\begin{enumerate}
\item $ A$ has the $\Gamma$-WEP.

\item For every $\Gamma$-$C^*$-algebra $B$ and completely isometric $\Gamma$-map $\kappa:  A \hookrightarrow   B$, there is a $\Gamma$-map $\psi:   B \to  A^{**}$ such that $\psi \circ \kappa = \iota_ A$.

\item For every faithful nondegenerate representation $\pi:  A \to \cB(H)$, there is a $\Gamma$-map $\Phi: \ell^\infty(\Gamma, \cB(H)) \to \overline{j_\pi( A)}^{\text{weak*}}\subseteq\ell^\infty(\Gamma, \cB(H))$ such that $\Phi(a) = a$ for all $a \in j_\pi( A)$.

\item Letting $\pi_u:  A \to B(H_u)$ denote the universal representation, there is a $\Gamma$-map $\Phi: \ell^\infty(\Gamma, B(H_u)) \to j_{\pi_u}( A)'' \cong  A^{**}$ such that $\Phi(a) = a$ for all $a \in j_{\pi_u}( A)$.

\item For every faithful covariant representation $\pi: (\Gamma \curvearrowright  A) \to \cB(H)$, there is a $\Gamma$-map $\Phi: \cB(H) \to \pi( A)''$ such that $\Phi(a) = a$ for all $a \in \pi( A)$.

\item Letting $(1 \otimes \lambda,\pi): (\Gamma \curvearrowright  A) \to B(H_u \otimes \ell^2(\Gamma))$ denote the regular representation induced from the universal representation of $ A$, there is a $\Gamma$-map $\Phi: B(H_u \otimes \ell^2(\Gamma)) \to \pi( A)'' \cong  A^{**}$ such that $\Phi(a) = a$ for all $a \in \pi( A)$.

\item There is a $\Gamma$-embedding $\cI_\Gamma( A) \hookrightarrow  A^{**}$.
\end{enumerate}
\end{thm}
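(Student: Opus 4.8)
The plan is to prove the seven equivalences in \thmref{G-WEP chars} by establishing a cycle together with a few auxiliary implications, using the embeddings \eqref{G-emb of A in ell^infty(G,B(H))} and \eqref{linf-embed} as the concrete models for the abstract $\ell^1(\Gamma)$-module structures. Since $\Gamma$-maps coincide with unital $\ell^1(\Gamma)$-morphisms, and the $\Gamma$-WEP is by definition the $\ell^1(\Gamma)$-WEP, many of these implications are just specializations of \thmref{A-WEP-eq-conds}; the real content is recognizing that the various concrete ranges ($\overline{j_\pi(A)}^{\text{weak*}}$, $j_{\pi_u}(A)''$, $\pi(A)''$, and $\cI_\Gamma(A)$) all serve as suitable targets. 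Concretely I would show $(1)\Leftrightarrow(2)$ directly from \defnref{d:AWEP} restricted to unital morphisms (this is essentially \lemref{l:essAWEP} combined with the fact that every unital inclusion of $\Gamma$-$C^*$-algebras is already essential), then $(2)\Leftrightarrow(3)$ by taking $B=\ell^\infty(\Gamma,\cB(H))$ with the $\Gamma$-embedding $j_\pi$ and using that $\ell^\infty(\Gamma,\cB(H))$ is $\ell^1(\Gamma)$-injective (as noted in the text, via Hamana's argument in \cite[Lemma 2.2]{Ham85}), so a weak expectation into $A^{**}$ factors through it; the target $\overline{j_\pi(A)}^{\text{weak*}}$ appears because $i_Z^*\circ\kappa^{**}$ lands there exactly as in the proof of $(1)\Rightarrow(4)$ of \thmref{A-WEP-eq-conds}.

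For the remaining items I would argue: $(3)\Rightarrow(4)$ is the special case $\pi=\pi_u$, noting $\overline{j_{\pi_u}(A)}^{\text{weak*}}=j_{\pi_u}(A)''\cong A^{**}$ because the universal representation is normal on $A^{**}$; the converse $(4)\Rightarrow(1)$ follows since $\ell^\infty(\Gamma,\cB(H_u))$ is $\ell^1(\Gamma)$-injective, so given any $\Gamma$-inclusion $\kappa:A\hookrightarrow Y$ one extends $j_{\pi_u}$ to $Y$ and composes with $\Phi$. The covariant-representation reformulations $(5)$ and $(6)$ I would link to $(3)$ and $(4)$ via the standard identification $\ell^\infty(\Gamma,\cB(H))\cong \cB(H)\oten\ell^\infty(\Gamma)\subseteq \cB(H\ten\ell^2(\Gamma))$, under which $j_\pi(A)$ corresponds to the image of the covariant representation $(1\otimes\lambda,\tilde\pi)$ and the weak*-closure $\overline{j_\pi(A)}^{\text{weak*}}$ corresponds to $\tilde\pi(A)''$; a $\Gamma$-map on $\ell^\infty(\Gamma,\cB(H))$ and one on $\cB(H\ten\ell^2(\Gamma))$ restrict/extend to each other using injectivity of $\cB(H\ten\ell^2(\Gamma))$ and the fact that $\ell^\infty(\Gamma,\cB(H))$ is a $\Gamma$-equivariant range of a conditional expectation from $\cB(H\ten\ell^2(\Gamma))$ (by averaging against $\ell^\infty(\Gamma)$). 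This is the step I expect to require the most care: one must check $\Gamma$-equivariance of the expectation $\cB(H\ten\ell^2(\Gamma))\to \ell^\infty(\Gamma,\cB(H))$ and that the covariant representation in $(5)$/$(6)$ is unitarily equivalent to an amplification of $j_\pi$, so that $\pi(A)''$ is genuinely identified with $\overline{j_\pi(A)}^{\text{weak*}}$.

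Finally, for $(7)$ I would use \thmref{A-WEP-eq-conds}(5): since $\cI_{\ell^1(\Gamma)}(A)=\cI_\Gamma(A)$ (the $\ell^1(\Gamma)$-injective envelope in $\ell^1(\Gamma)\textbf{mod}$ restricted to unital morphisms coincides with Hamana's $\Gamma$-injective envelope of $A$ as a $\Gamma$-operator system, by the uniqueness of injective envelopes and the fact that $\cI_\Gamma(A)$ is $\ell^1(\Gamma)$-injective and a rigid $\ell^1(\Gamma)$-extension of $A$), the condition ``there is an $\ell^1(\Gamma)$-embedding $\cI_\Gamma(A)\hookrightarrow A^{**}$ restricting to $i_A$'' is exactly \thmref{A-WEP-eq-conds}(5), which is equivalent to $(1)$. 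I would remark that any $\Gamma$-embedding $\cI_\Gamma(A)\hookrightarrow A^{**}$ automatically restricts to the canonical inclusion on $A$ by rigidity (composing with a $\Gamma$-expectation $A^{**}\to A^{**}$ onto the copy of $A$, which exists because $A$ has an invariant conditional expectation in $A^{**}$ is not needed—rather, rigidity of $\cI_\Gamma(A)$ over $A$ forces the restriction to be the identity up to the given embedding), so the phrasing in $(7)$ without mentioning the restriction is harmless. The main obstacle overall is bookkeeping: keeping straight which maps are unital, which ranges are weak*-closed, and ensuring every extension step invokes the correct injectivity statement ($\ell^1(\Gamma)$-injectivity of $\ell^\infty(\Gamma,\cB(H))$, plain injectivity of $\cB(H\ten\ell^2(\Gamma))$, and $\Gamma$-injectivity of $\cI_\Gamma(A)$) with its matching category of morphisms.
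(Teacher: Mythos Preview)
Your overall strategy is sound and largely parallels the paper's proof: the cycle through $(1)\Leftrightarrow(2)$, $(2)\Rightarrow(3)$, $(3)\Rightarrow(4)$, and $(4)\Rightarrow(1)$ works exactly as you describe, and your identification $\cI_{\ell^1(\Gamma)}(A)=\cI_\Gamma(A)$ (hence $(1)\Leftrightarrow(7)$ via \thmref{A-WEP-eq-conds}(5)) is correct once one notes that Proposition~\ref{p:G-inj} makes $\cI_\Gamma(A)$ injective in $\mathbf{mod}\,\ell^1(\Gamma)$ and that $\ell^1(\Gamma)$-morphisms between essential modules are $\Gamma$-maps. Your route $(4)\Leftrightarrow(6)$ via the $\Gamma$-equivariant diagonal expectation $\cB(H_u\otimes\ell^2(\Gamma))\to\ell^\infty(\Gamma,\cB(H_u))$ is also valid and is in fact a bit more direct than the paper, which reaches $(6)$ only through $(7)\Rightarrow(5)\Rightarrow(6)$.

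There is, however, a genuine gap in your treatment of $(5)$. Condition $(5)$ demands the conclusion for \emph{every} faithful covariant representation $\pi:(\Gamma\curvearrowright A)\to\cB(H)$, not just those of regular type. Your $\ell^\infty(\Gamma,\cB(H))\subseteq\cB(H\otimes\ell^2(\Gamma))$ identification only produces the map for the induced regular representation $(1\otimes\lambda,\tilde\pi)$ on $H\otimes\ell^2(\Gamma)$; an arbitrary covariant representation is \emph{not} unitarily equivalent to an amplification of some $j_\pi$, so your sentence ``the covariant representation in $(5)$/$(6)$ is unitarily equivalent to an amplification of $j_\pi$'' is where the argument breaks. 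The paper handles this by proving $(7)\Rightarrow(5)$: given any faithful covariant $\pi$ on $H$, the $\Gamma$-injectivity of $\cI_\Gamma(A)$ extends the inclusion $A\hookrightarrow\cI_\Gamma(A)$ to a $\Gamma$-map $\cB(H)\to\cI_\Gamma(A)$, which one then composes with $\cI_\Gamma(A)\hookrightarrow A^{**}\to\pi(A)''$. An even shorter patch, closer to your framework, is to observe $(2)\Rightarrow(5)$ directly: a faithful covariant $\pi$ gives a $\Gamma$-inclusion $A\cong\pi(A)\subseteq\cB(H)$ (with $\Gamma$ acting on $\cB(H)$ by $\Ad(u_s)$), so $(2)$ yields $\psi:\cB(H)\to A^{**}$ with $\psi|_A=\iota_A$, and composing with the normal $\Gamma$-map $A^{**}\to\pi(A)''$ finishes it.

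A smaller point: your claim that ``any $\Gamma$-embedding $\cI_\Gamma(A)\hookrightarrow A^{**}$ automatically restricts to the canonical inclusion on $A$ by rigidity'' is not justified by rigidity alone (rigidity controls self-maps of $\cI_\Gamma(A)$, not arbitrary embeddings into $A^{**}$). In practice this is harmless, since the embeddings actually constructed in $(4)\Rightarrow(7)$ do restrict to the canonical inclusion, and that is all one needs for the converse implications; the paper implicitly reads $(7)$ this way.
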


\proof

(1) $\Longleftrightarrow$ (2): This follows immediately from Proposition \ref{p:G-WEP}.

(2) $\implies$ (3): Assume (2), and let $\pi:  A \to \cB(H)$ be a faithful representation. Since $j_\pi:  A \to \ell^\infty(\Gamma, \cB(H))$ is a completely isometric $\Gamma$-map, there is by (2) a $\Gamma$-map $\psi: \ell^\infty(\Gamma, \cB(H)) \to  A^{**}$ such that $\psi \circ j_\pi = \iota_ A$. By the universal property of $ A^{**}$, there is a normal $*$-homomorphism $\varphi:  A^{**} \to \overline{j_\pi( A)}^{\text{weak*}}$ such that $\varphi(a) = j_\pi(a)$ for all $a \in  A$. By normality, $\varphi$ is a $\Gamma$-map. Thus $\varphi \circ \psi$ satisfies the conclusion of (3).

(3) $\implies$ (4): The only thing not completely trivial about this implication is the assertion that $j_{\pi_u}( A)'' \cong  A^{**}$. This follows from the universal property of $ A^{**}$ together with the fact that the canonical inclusion $\tilde \pi_u:  A^{**} \hookrightarrow B(H_u)$ induces a normal injective $*$-homomorphism $j_{\tilde \pi_u} :  A^{**} \hookrightarrow \ell^\infty(\Gamma,B(H_u))$ that restricts to $j_{\pi_u}$ on $ A$.

(4) $\implies$ (7): This direction follows since $\ell^\infty(\Gamma, B(H_u))$ is injective in the category of $\Gamma$-$C^*$-algebras with $\Gamma$-maps (see \cite[Lemma 2.2]{Ham85}). Thus there is a $\Gamma$-map $\varphi:\cI_\Gamma( A) \to \ell^\infty(\Gamma, B(H_u))$ such that $\varphi(a) = j_{\pi_u}(a)$ for all $a \in  A$. With $\Phi$ from (4), the composition $\Phi \circ \varphi: \cI_\Gamma( A) \to  A^{**}$ gives a $\Gamma$-embedding by $\Gamma$-essentiality (see \cite[Section 2]{Ham85}).

(7) $\implies$ (5): Assume that there is a $\Gamma$-embedding $\kappa:\cI_\Gamma( A) \hookrightarrow  A^{**}$, and let $\pi: (\Gamma \curvearrowright  A) \to \cB(H)$ be a covariant representation. By $\Gamma$-injectivity, the inclusion $ A \hookrightarrow \cI_\Gamma( A)$ extends to a $\Gamma$-map $\tilde \kappa: \cB(H) \to \cI_\Gamma( A)$. By the universal property of $ A^{**}$, there is a $\Gamma$-map $\psi:  A^{**} \to \pi( A)''$ such that $\psi(a) = \pi(a)$ for all $a \in  A$. The composition $\psi \circ \tilde \kappa: \cB(H) \to \pi( A)''$ is the desired $\Gamma$-map.

(5) $\implies$ (6): This follows similarly to the implication (3) $\implies$ (4).

(6) $\implies$ (4): Since $\ell^\infty(\Gamma,B(H_u)) \subseteq B(H_u \otimes \ell^2(\Gamma))$ canonically as a $\Gamma$-subspace, the restriction of the map guaranteed by (6) satisfies the claim in (4).

(7) $\implies$ (2): This follows from a routine use of $\Gamma$-injectivity.
\endproof

\begin{thm}
If $\Gamma\ltimes A$ has the WEP then $ A$ has the $\Gamma$-WEP.
\end{thm}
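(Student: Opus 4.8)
The plan is to verify condition~(6) of Theorem~\ref{G-WEP chars}. Realize the reduced crossed product through the universal representation: let $A\subseteq\cB(H_u)$ be the universal representation, put $K:=H_u\otimes\ell^2(\Gamma)$, and let $(\pi,1\otimes\lambda)$ be the covariant pair on $K$ with $\pi(a)(\xi\otimes\delta_t)=\alpha_{t^{-1}}(a)\xi\otimes\delta_t$ and $(1\otimes\lambda)(s)(\xi\otimes\delta_t)=\xi\otimes\delta_{st}$, so that $\ca(\pi(A)\cup(1\otimes\lambda)(\Gamma))\cong\Gamma\ltimes A$ inside $\cB(K)$. Equip $\cB(K)$, and hence its subalgebra $(\Gamma\ltimes A)''$, with the $\Gamma$-action $\beta_s:=\Ad((1\otimes\lambda)(s))$; by covariance $\beta$ restricts to $\alpha$ on $\pi(A)$, and thus induces a $\Gamma$-action on $\pi(A)''$. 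Since $\Gamma\ltimes A$ has the WEP, its representation on $\cB(K)$ admits a weak expectation, i.e.\ a ucp map $\Psi:\cB(K)\to(\Gamma\ltimes A)''$ with $\Psi|_{\Gamma\ltimes A}=\id$.

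First I would note that $\Psi$ is automatically $\Gamma$-equivariant: $\Gamma\ltimes A$ lies in the multiplicative domain of $\Psi$, and each $(1\otimes\lambda)(s)$ is a unitary of $\Gamma\ltimes A$ (this is where discreteness of $\Gamma$ and unitality of $A$ enter), so $\Psi(\beta_s(x))=\Psi((1\otimes\lambda)(s))\,\Psi(x)\,\Psi((1\otimes\lambda)(s))^*=\beta_s(\Psi(x))$ for every $x\in\cB(K)$. Next I would produce a normal $\Gamma$-equivariant conditional expectation $E:(\Gamma\ltimes A)''\to\pi(A)''\cong A^{**}$ that is the identity on $\pi(A)$. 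Let $V:H_u\to K$ be the isometry $\xi\mapsto\xi\otimes\delta_e$; then $\theta:=V^*(\cdot)V$ is a normal ucp map with $\theta(\pi(a)(1\otimes\lambda)(t))$ equal to $a$ if $t=e$ and $0$ otherwise, so on $\Gamma\ltimes A$ it is the canonical conditional expectation onto $\pi(A)$ followed by the $*$-isomorphism $\pi(a)\mapsto a$. By normality $\theta$ carries $(\Gamma\ltimes A)''$ onto $M:=\overline{A}^{\text{weak*}}\cong A^{**}$, and writing $\pi''\colon M\to(\Gamma\ltimes A)''$ for the normal extension of $\pi$ (so $\pi''(M)=\pi(A)''$ and $\theta\circ\pi''=\id_M$), the composite $E:=\pi''\circ\theta$ is a normal conditional expectation of $(\Gamma\ltimes A)''$ onto $\pi(A)''$ fixing $\pi(A)$. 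Equivariance of $E$ then follows by normality and $\sigma$-weak density of $\Gamma\ltimes A$, once one checks it on the dense $*$-subalgebra: the $(1\otimes\lambda)(e)$-coefficient of $\beta_s\bigl(\sum_t\pi(a_t)(1\otimes\lambda)(t)\bigr)$ is $\pi(\alpha_s(a_e))$. (Alternatively, $E$ is just the standard normal conditional expectation of the discrete $W^*$-crossed product $(\Gamma\ltimes A)''\cong\Gamma\,\bar{\ltimes}\,A^{**}$ onto $A^{**}$.)

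Putting these together, $\Phi:=E\circ\Psi:\cB(K)\to\pi(A)''$ is a $\Gamma$-map with $\Phi(\pi(a))=\pi(a)$ for all $a\in A$, which is precisely condition~(6) of Theorem~\ref{G-WEP chars}; hence $A$ has the $\Gamma$-WEP. I expect the main obstacle to be the bookkeeping for the conditional expectation $E$ — getting the identification $\pi(A)''\cong A^{**}$ and the $\Gamma$-equivariance right, and keeping the action $\beta=\Ad((1\otimes\lambda)(\cdot))$ on $\cB(K)$ distinct from the action $\alpha$ it induces on $\pi(A)$. Everything else is formal, and the hypothesis is used only through the single map $\Psi$.
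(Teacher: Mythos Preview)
Your proof is correct. Both your approach and the paper's rest on the same two ideas: (i) any weak expectation out of an algebra containing $\Gamma\ltimes A$ is automatically $\Gamma$-equivariant, since the unitaries $(1\otimes\lambda)(s)$ lie in $\Gamma\ltimes A$ and hence in the multiplicative domain; and (ii) the canonical conditional expectation lets one pass from the crossed product back down to $A$. The difference is in the packaging. The paper verifies condition~(7) of Theorem~\ref{G-WEP chars} by composing $\Gamma$-maps
\[
\cI_\Gamma(A)\to\cI(\Gamma\ltimes A)\to(\Gamma\ltimes A)^{**}\xrightarrow{\,E^{**}\,}A^{**},
\]
where the first arrow comes from Hamana's structural result \cite[Theorem~3.4]{Ham85} and the second from the Blackadar--Paulsen characterization of WEP via injective envelopes; the composite restricts to the identity on $A$ and is therefore a $\Gamma$-embedding by essentiality. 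You instead verify condition~(6) directly on the regular-representation Hilbert space $K=H_u\otimes\ell^2(\Gamma)$: you take the Lance weak expectation $\Psi:\cB(K)\to(\Gamma\ltimes A)''$, observe its $\Gamma$-equivariance via the multiplicative-domain argument, and compose with the explicit normal conditional expectation $E=\pi''\circ V^*(\cdot)V$ onto $\pi(A)''\cong A^{**}$. Your route is more elementary and self-contained---it avoids injective envelopes and Hamana's theorem entirely---at the price of a somewhat longer Hilbert-space bookkeeping argument; the paper's is shorter but imports a nontrivial external result.
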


\begin{proof}
Denote by $E: \Gamma\ltimes A \to  A$ the canonical conditional expectation, which is a $\Gamma$-map, and let $E^{**}: (\Gamma\ltimes A)^{**} \to  A^{**}$ be its second adjoint. So, using
\cite[Theorem 3.4]{Ham85}, we have the following $\Gamma$-maps
\[
\cI_\Gamma( A) \to \cI(\Gamma\ltimes A) \to (\Gamma\ltimes A)^{**}\to  A^{**}
\]
whose composition $\cI_\Gamma( A) \to  A^{**}$ restricts to
identity on $ A$, hence is a $\Gamma$-embedding by $\Gamma$-essentiality.
Thus, by Theorem \ref{G-WEP chars}, $ A$ has the $\Gamma$-WEP.
\end{proof}

\subsection{The WEP for actions}

A discrete group $\Gamma$ is said to act amenably on a $\Gamma$-$C^*$-algebra $ A$ if the bidual action $\Gamma\acts A^{**}$ is amenable. It is known that a group can act amenably on a non-nuclear $C^*$-algebra (or a non-injective von Neumann algebra). Motivated by the above characterization, and in order to include actions on $C^*$-algebras without the WEP, we make the following definition.

\begin{defn}
We say an action $\Gamma\acts  A$ has the WEP if there is a $\Gamma$-map
$\Phi: \ell^\infty(\Gamma, A^{**}) \to A^{**}$
such that $(\Phi \circ j)(a) = a$ for all
$a \in A$.
\end{defn}

Obviously, from the definitions, if $ A$ has the $\Gamma$-WEP then $\Gamma \acts A$ has the WEP.
\begin{prop}
If $\Gamma\acts A$ has the WEP and $ A$ has the WEP,
then $ A$ has the $\Gamma$-WEP.
\end{prop}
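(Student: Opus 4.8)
The plan is to verify condition (4) of Theorem \ref{G-WEP chars}: I would take a weak expectation for $A$ supplied by Lance's WEP, amplify it coordinatewise over $\Gamma$ to obtain a $\Gamma$-map, and then compose with the $\Gamma$-map coming from the WEP of the action $\Gamma \acts A$.

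Concretely, fix the universal representation $\pi_u : A \to B(H_u)$, which is faithful and nondegenerate since $A$ is unital. Because $A$ has the WEP, Lance's WEP applied to $\pi_u$ yields a ucp map $E_0 : B(H_u) \to \pi_u(A)''$ with $E_0(\pi_u(a)) = \pi_u(a)$ for all $a \in A$. Composing with the canonical $*$-isomorphism $\pi_u(A)'' \cong A^{**}$ coming from the universal property of the bidual (under which $\pi_u(a)$ corresponds to $a \in A \subseteq A^{**}$), I get a ucp map $E : B(H_u) \to A^{**}$ with $E(\pi_u(a)) = a$ for all $a \in A$. Next I would amplify $E$ coordinatewise: define $\widetilde E : \ell^\infty(\Gamma, B(H_u)) \to \ell^\infty(\Gamma, A^{**})$ by $\widetilde E(f)(t) = E(f(t))$. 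This map is ucp, being a coordinatewise application of a ucp map, and it is $\Gamma$-equivariant because the $\Gamma$-action on $\ell^\infty(\Gamma, X)$ is translation in the $\Gamma$-variable alone, $(s\cdot f)(t) = f(s^{-1}t)$, which visibly commutes with applying $E$ in each coordinate; hence $\widetilde E$ is a $\Gamma$-map. By hypothesis $\Gamma \acts A$ has the WEP, so there is a $\Gamma$-map $\Phi : \ell^\infty(\Gamma, A^{**}) \to A^{**}$ with $(\Phi \circ j)(a) = a$ for all $a \in A$, where $j(a)(t) = t^{-1}\cdot a$ is the canonical $\Gamma$-embedding of $A$ into $\ell^\infty(\Gamma, A^{**})$. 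Put $\Psi := \Phi \circ \widetilde E : \ell^\infty(\Gamma, B(H_u)) \to A^{**}$; this is a $\Gamma$-map.

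It then remains to check that $\Psi$ fixes $j_{\pi_u}(A)$, where $j_{\pi_u}(a)(t) = \pi_u(t^{-1}\cdot a)$. For $a \in A$ and $t \in \Gamma$, since $t^{-1}\cdot a \in A$ we compute $\widetilde E\big(j_{\pi_u}(a)\big)(t) = E\big(\pi_u(t^{-1}\cdot a)\big) = t^{-1}\cdot a = j(a)(t)$, so $\widetilde E \circ j_{\pi_u} = j$ and therefore $\Psi \circ j_{\pi_u} = \Phi \circ j = \iota_A$. Thus $\Psi$ satisfies condition (4) of Theorem \ref{G-WEP chars}, and by the implication (4) $\Rightarrow$ (1) established there, $A$ has the $\Gamma$-WEP.

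The argument is genuinely short once the right target (condition (4)) is identified; I do not expect a serious obstacle. The two points that require care are (i) the $\Gamma$-equivariance of the amplified map $\widetilde E$, which is precisely where one uses that the $\Gamma$-action on $\ell^\infty(\Gamma, \cdot)$ leaves the coefficient algebra untouched, and (ii) the routine bookkeeping of the identification $\pi_u(A)'' \cong A^{**}$ needed to make the Lance weak expectation land in $A^{**}$ and fix $A$ correctly. One could alternatively bypass Theorem \ref{G-WEP chars} and build a $\Gamma$-embedding $\cI_\Gamma(A) \hookrightarrow A^{**}$ directly (condition (7)) by precomposing $\Psi$ with a $\Gamma$-extension $\cI_\Gamma(A) \to \ell^\infty(\Gamma, B(H_u))$ of $j_{\pi_u}$ and invoking $\Gamma$-essentiality, but invoking condition (4) is cleaner.
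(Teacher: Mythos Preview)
Your proof is correct and follows essentially the same approach as the paper: amplify a consequence of Lance's WEP coordinatewise over $\Gamma$ to obtain a $\Gamma$-map, and then compose with the $\Gamma$-map coming from the WEP of the action. The only cosmetic difference is that the paper amplifies the embedding $\cI(A)\hookrightarrow A^{**}$ and concludes via Theorem~\ref{A-WEP-eq-conds}(6) (since $\ell^\infty(\Gamma,\cI(A))$ is $\Gamma$-injective), whereas you amplify the weak expectation $B(H_u)\to A^{**}$ and conclude via condition~(4) of Theorem~\ref{G-WEP chars}; both choices work equally well.
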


\begin{proof}
Since $ A$ has the WEP, there is a $\Gamma$-embedding $\iota: \ell^\infty(\Gamma,\cI( A)) \hookrightarrow \ell^\infty(\Gamma,  A^{**})$ that fixes elements in $\ell^\infty(\Gamma,  A)$. With $j:  A \to \ell^\infty(\Gamma, \cI( A))$ and $j_{\pi_u}:  A \to \ell^\infty(\Gamma,  A^{**})$ the $\Gamma$-embeddings described as in Equation \ref{G-emb of A in ell^infty(G,B(H))}, we have $\iota \circ j(a)=j_{\pi_u}(a)$. Since $\Gamma \acts A$ has the WEP, we can compose to get a $\Gamma$-map $\ell^\infty(\Gamma,\cI(A)) \to A^{**}$ restricting to the identity on the canonical copies of $A$. Since $\ell^\infty(\Gamma,\cI(A))$ is $\Gamma$-injective, the result follows from Theorem \ref{A-WEP-eq-conds} (6).
\end{proof}

\begin{prop} \label{amenable action has WEP}
If $\Gamma\acts A$ is amenable, then $\Gamma\acts A$ has the WEP.
\end{prop}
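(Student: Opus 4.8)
The plan is to read off the required $\Gamma$-map from the conditional-expectation description of amenability at the level of the bidual. By hypothesis the $W^*$-dynamical system $(A^{**},\Gamma,\alpha^{**})$ is amenable, so [Anantharaman--Delaroche; see, e.g., \cite{ADI,BEW3}] there is a $\Gamma$-equivariant conditional expectation $P\colon \ell^\infty(\Gamma,A^{**})=\ell^\infty(\Gamma)\oten A^{**}\to\widetilde{\alpha}(A^{**})$, where $\widetilde{\alpha}\colon A^{**}\to\ell^\infty(\Gamma,A^{**})$, $\widetilde{\alpha}(m)(t)=\alpha^{**}_{t^{-1}}(m)$, is the normal cover of $\alpha$; here the domain carries the translation action $(s\cdot f)(t)=f(s^{-1}t)$, for which $\widetilde{\alpha}$ is a $\Gamma$-equivariant normal $*$-homomorphism, injective since $\widetilde{\alpha}(m)(e)=m$, hence a $*$-isomorphism onto the weak*-closed subalgebra $\widetilde{\alpha}(A^{**})$. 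Put $\Phi:=\widetilde{\alpha}^{-1}\circ P\colon \ell^\infty(\Gamma,A^{**})\to A^{**}$; being a composition of the $\Gamma$-equivariant conditional expectation $P$ with the inverse of a $\Gamma$-equivariant normal $*$-isomorphism, $\Phi$ is a $\Gamma$-map. The embedding $j=j_{\pi_u}$ of Equation~\ref{G-emb of A in ell^infty(G,B(H))} used in the definition of the WEP for $\Gamma\acts A$ satisfies $j_{\pi_u}(a)(t)=\alpha_{t^{-1}}(a)=\widetilde{\alpha}(a)(t)$, i.e.\ $j=\widetilde{\alpha}|_A$; since $P$ fixes $\widetilde{\alpha}(A^{**})\supseteq\widetilde{\alpha}(A)$ pointwise, $\Phi(j(a))=\widetilde{\alpha}^{-1}(\widetilde{\alpha}(a))=a$ for all $a\in A$, so $\Gamma\acts A$ has the WEP.

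The one delicate point is matching conventions. The Anantharaman--Delaroche characterization is commonly stated with the \emph{diagonal} action $(s\cdot f)(t)=\alpha^{**}_s(f(s^{-1}t))$ on $\ell^\infty(\Gamma,A^{**})$ and with $A^{**}$ sitting inside as the constant functions $1\otimes A^{**}$, rather than with the translation action and $\widetilde{\alpha}$. The two pictures are intertwined by the $\Gamma$-equivariant $*$-isomorphism $\Theta\colon(\ell^\infty(\Gamma,A^{**}),\text{transl})\to(\ell^\infty(\Gamma,A^{**}),\text{diag})$, $\Theta(f)(t)=\alpha^{**}_t(f(t))$, which carries $\widetilde{\alpha}(A^{**})$ onto $1\otimes A^{**}$ and $j$ onto $a\mapsto 1\otimes a$; precomposing with $\Theta$ converts a conditional expectation produced in the diagonal picture into one of the form used above, so the argument does not depend on which normalization the reader has in mind. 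I would include this identification explicitly, together with the (routine) verification that the translation action on $\ell^\infty(\Gamma,A^{**})$ and $j_{\pi_u}$ are exactly as set up in Section~\ref{s:gWEP}.

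Alternatively one can argue directly from the approximation-property form of amenability of $\Gamma\acts A$: a net $(\xi_i)$ of finitely supported $Z(A^{**})$-valued functions on $\Gamma$ with $\sum_t\xi_i(t)^*\xi_i(t)\le 1$, $\sum_t\xi_i(t)^*\xi_i(t)\to 1$ ultraweakly, and $\sum_t\|\xi_i(t)-\alpha^{**}_s(\xi_i(s^{-1}t))\|^2\to 0$ for each $s\in\Gamma$. Working in the diagonal picture, set $\Psi_i(f)=\sum_t\xi_i(t)^*f(t)\,\xi_i(t)$: each $\Psi_i$ is completely positive and contractive, $\Psi_i(1\otimes a)\to a$, and almost-invariance of $(\xi_i)$ gives $\|\Psi_i(s\cdot f)-\alpha^{**}_s(\Psi_i(f))\|\to 0$ uniformly on the unit ball; a point-weak* limit point $\Psi$ of $(\Psi_i)$ in the point-weak*-compact set of unital completely positive maps $\ell^\infty(\Gamma,A^{**})\to A^{**}$ is then a genuine $\Gamma$-map fixing $1\otimes A\subseteq 1\otimes A^{**}$, and $\Psi\circ\Theta$ is the desired map. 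There is no conceptual obstacle in either route; the only real work is (i) pinning down the correct equivariant conditional-expectation form of amenability of $(A^{**},\Gamma,\alpha^{**})$ and reconciling its conventions with Section~\ref{s:gWEP}, and (ii), in the hands-on route, checking that asymptotic equivariance survives in the weak* limit --- an argument of exactly the same routine character as the weak*-limit constructions in Lemmas~\ref{l:CB(A,M) A-inj for dual M} and \ref{l:flat}.
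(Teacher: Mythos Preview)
Your proof is correct and is essentially the same as the paper's: your intertwiner $\Theta(f)(t)=\alpha^{**}_t(f(t))$ is exactly the automorphism $\kappa$ the paper introduces, and the paper likewise deduces the result from the diagonal-action conditional-expectation characterization of amenability via $\kappa(j_{\pi_u}(a))=1\otimes a$. The only difference is organizational---you state the conclusion in the translation picture first and then justify the conversion, while the paper opens with $\kappa$ and verifies its intertwining properties directly---and your alternative approximation-property argument is extra content the paper does not include.
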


\begin{proof}

Consider the automorphism $\kappa: \ell^\infty(\Gamma,  A^{**}) \to \ell^\infty(\Gamma,  A^{**})$ defined by
$\kappa(f)(t) = t(f(t))$, $t\in\Gamma$.
For $s\in\Gamma$ and $f\in \ell^\infty(\Gamma,  A^{**})$ we have
\[
\kappa(sf)(t) = t (sf(t)) = t(f(s^{-1}t)) = s(s^{-1}t(f(s^{-1}t))) = 
s(\kappa(f)(s^{-1}t)) = s(s\kappa(f)(t)),
\]
which shows $\kappa(sf) = \alpha_s(\kappa(f))$,
where $\alpha:\Gamma\acts \ell^\infty(\Gamma,  A^{**})$ is the diagonal action.

Moreover, for every $a\in  A$,
\[
\kappa(j_{\pi_u}(a))(t) = t (j_{\pi_u}(a)(t)) = t (\pi_u(t^{-1}a)) = {\pi_u}(a),
\]
which shows $\kappa(j_{\pi_u}(a)) = 1\otimes a$.

The result is now clear by the definition of the WEP for the action and the fact that an action $\Gamma\acts A$ is amenable if and only if there is an $\alpha$-equivariant conditional expectation
$E: \ell^\infty(\Gamma,  A^{**}) \cong \ell^\infty(\Gamma) \overline{\otimes}  A^{**} \to 1\otimes  A^{**} \cong  A^{**}$.
\end{proof}

Combining the previous two propositions, we get:

\begin{cor}\label{amen+WEP-->G-WEP}
If $\Gamma\acts A$ is amenable and $ A$ has the WEP, then $ A$ has the $\Gamma$-WEP.
\end{cor}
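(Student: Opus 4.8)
The plan is to obtain this statement as an immediate concatenation of the two preceding propositions, with essentially no further work. First I would invoke Proposition~\ref{amenable action has WEP}: since $\Gamma\acts A$ is amenable, by definition there is an $\alpha$-equivariant conditional expectation $E:\ell^\infty(\Gamma)\oten A^{**}\to 1\oten A^{**}\cong A^{**}$, and precomposing with the twist automorphism $\kappa$ of $\ell^\infty(\Gamma,A^{**})$ appearing in that proof yields a $\Gamma$-map $\Phi:\ell^\infty(\Gamma,A^{**})\to A^{**}$ with $(\Phi\circ j)(a)=a$ for all $a\in A$. In other words, $\Gamma\acts A$ has the WEP in the sense of the Definition above.

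Next I would feed this conclusion, together with the standing hypothesis that $A$ has Lance's WEP, into the Proposition stated immediately before Proposition~\ref{amenable action has WEP}. That proposition takes exactly these two inputs---``$\Gamma\acts A$ has the WEP'' and ``$A$ has the WEP''---and produces a $\Gamma$-embedding $\cI_\Gamma(A)\hookrightarrow A^{**}$; indeed, the WEP of $A$ gives a $\Gamma$-embedding $\ell^\infty(\Gamma,\cI(A))\hookrightarrow\ell^\infty(\Gamma,A^{**})$ fixing $\ell^\infty(\Gamma,A)$, composing with the $\Gamma$-map $\Phi$ from the WEP of the action lands one in $A^{**}$ while fixing the canonical copy of $A$, and then $\Gamma$-injectivity of $\ell^\infty(\Gamma,\cI(A))$ and Theorem~\ref{A-WEP-eq-conds}(6) deliver the $\Gamma$-WEP of $A$. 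Chaining these two facts is the whole proof.

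There is no genuine obstacle here: all of the mathematical content has already been packaged into the two propositions, so the only thing to check is that their hypotheses and conclusions line up verbatim, which they do. The proof is therefore a one-line deduction.
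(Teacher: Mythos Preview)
Your proposal is correct and matches the paper's proof exactly: the paper simply states ``Combining the previous two propositions, we get'' the corollary, which is precisely the concatenation you describe. Your added elaboration of what happens inside each proposition is accurate but unnecessary for the corollary itself.
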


\begin{example}
Let $\partial_F\Gamma$ denote the Furstenberg boundary of $\Gamma$.
Then $C(\partial_F\Gamma)$ has the $\Gamma$-WEP.
This follows by Theorem \ref{G-WEP chars} and the fact that
$\cI_\Gamma(C(\partial_F\Gamma)) = C(\partial_F\Gamma)$.
In particular, if $\Gamma$ is not exact, then $C(\partial_F\Gamma)$ has the $\Gamma$-WEP but $\Gamma\acts C(\partial_F\Gamma)$ is not amenable. 
\end{example}

\begin{thm} \label{G amen <-> C*rG has G-WEP} \label{WEP-Lance-dis}
The following are equivalent for a discrete group $\Gamma$.
\begin{enumerate}
\item
$\Gamma$ is amenable.
\item
$C^*_\lm(\Gamma)$ has the WEP.
\item
$C^*_\lm(\Gamma)$ has the $\Gamma$-WEP.
\end{enumerate}
\end{thm}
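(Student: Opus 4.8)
The plan is to establish the cycle $(1)\implies(2)\implies(3)\implies(1)$, taking advantage of the machinery already developed.

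For $(1)\implies(2)$: amenability of $\Gamma$ is equivalent to nuclearity of $C^*_\lm(\Gamma)$ (by the classical result of Lance, or Lance's characterization via the WEP), and nuclear $C^*$-algebras have the WEP. Alternatively, and more in the spirit of this paper, one can invoke $(1)\implies(2)$ of Theorem \ref{WEP-Lance-dis} together with the fact that amenability already forces injectivity of $VN(\Gamma)$, hence the WEP of $C^*_\lm(\Gamma)$. This step is standard and I would dispatch it in one line.

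For $(2)\implies(3)$: this is where Corollary \ref{amen+WEP-->G-WEP} does the work, once we observe that the \emph{trivial} action of $\Gamma$ on the reduced group $C^*$-algebra is not the relevant action here—rather, $C^*_\lm(\Gamma)$ carries the action coming from conjugation by the left regular representation (equivalently, the dual co-action restricted appropriately). The key point is that this action $\Gamma\acts C^*_\lm(\Gamma)$ is \emph{amenable}: indeed, $C^*_\lm(\Gamma) = \mathbb C\ltimes\Gamma$, and the canonical action on a reduced crossed product $\Gamma\ltimes A$ is always amenable (this is essentially because the bidual action on $VN(\Gamma) = (\mathbb C\ltimes\Gamma)^{**}$ admits an equivariant conditional expectation $\ell^\infty(\Gamma,VN(\Gamma))\to VN(\Gamma)$, built from the comultiplication on $VN(\Gamma)$). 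With amenability of the action in hand, and the hypothesis that $C^*_\lm(\Gamma)$ has the WEP, Corollary \ref{amen+WEP-->G-WEP} immediately yields the $\Gamma$-WEP of $C^*_\lm(\Gamma)$.

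For $(3)\implies(1)$: this is the main obstacle and where I expect the real content to lie. Suppose $C^*_\lm(\Gamma)$ has the $\Gamma$-WEP. By Theorem \ref{G-WEP chars}, there is a $\Gamma$-embedding $\cI_\Gamma(C^*_\lm(\Gamma))\hookrightarrow C^*_\lm(\Gamma)^{**} = VN(\Gamma)$. Now $C^*_\lm(\Gamma)$ contains the unit, so $\mathbb C$ sits inside $C^*_\lm(\Gamma)$ $\Gamma$-equivariantly (with the trivial $\Gamma$-action on $\mathbb C$, since the canonical action fixes the unit), which induces a $\Gamma$-embedding $\cI_\Gamma(\mathbb C)\hookrightarrow\cI_\Gamma(C^*_\lm(\Gamma))\hookrightarrow VN(\Gamma)$. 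Composing with the canonical trace (or any $\Gamma$-invariant state) $\tau^{**}$ on $VN(\Gamma)$—here crucially the canonical tracial state on $C^*_\lm(\Gamma)$ is $\Gamma$-invariant—gives a $\Gamma$-map $\cI_\Gamma(\mathbb C)\to\mathbb C$ restricting to the identity on $\mathbb C$. By $\Gamma$-rigidity of the injective envelope, $\cI_\Gamma(\mathbb C) = \mathbb C$, i.e., $\mathbb C$ is $\Gamma$-injective, equivalently $\ell^1(\Gamma)$-injective. This provides a $\Gamma$-invariant state $\ell^\infty(\Gamma)\to\mathbb C$, forcing $\Gamma$ to be amenable. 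This argument mirrors the proof of the $G$-QWEP proposition above, and the only subtlety is confirming that the relevant action on $\mathbb C\subseteq C^*_\lm(\Gamma)$ is trivial and that the trace on $C^*_\lm(\Gamma)$ is indeed $\Gamma$-invariant for the conjugation action—both of which are routine but should be stated carefully.
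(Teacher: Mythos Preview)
Your cycle has a genuine gap at $(2)\Rightarrow(3)$. The claim that the conjugation action $\Gamma\acts C^*_\lambda(\Gamma)$ is \emph{always} amenable is false. This action is inner (implemented by the unitaries $\lambda(s)\in C^*_\lambda(\Gamma)$), hence cocycle conjugate to the trivial action; but the trivial action on any nonzero von Neumann algebra is amenable precisely when $\Gamma$ is amenable. Concretely: if $P:\ell^\infty(\Gamma)\overline{\otimes}C^*_\lambda(\Gamma)^{**}\to C^*_\lambda(\Gamma)^{**}$ were an equivariant conditional expectation for the bidual inner action, then composing $f\mapsto P(f\otimes 1)$ with the (necessarily $\Ad(\lambda(s))$-invariant) canonical trace yields a left-invariant mean on $\ell^\infty(\Gamma)$, forcing $\Gamma$ amenable. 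So your step, as written, would make every discrete group amenable. The fix is easy but changes the logic: since $(1)\Leftrightarrow(2)$ by Lance, you should argue $(1)\Rightarrow(3)$ instead, using that every action of an amenable group is amenable, then invoke Corollary~\ref{amen+WEP-->G-WEP}. This is exactly what the paper does.

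Two smaller points. First, $C^*_\lambda(\Gamma)^{**}\neq VN(\Gamma)$ in general (the latter is a quotient of the former); fortunately your $(3)\Rightarrow(1)$ argument survives, since the canonical trace already lives on $C^*_\lambda(\Gamma)$ and extends to $C^*_\lambda(\Gamma)^{**}$. Second, your $(3)\Rightarrow(1)$ via the trace and $\Gamma$-rigidity of $\cI_\Gamma(\mathbb{C})$ is correct and genuinely different from the paper's route: the paper instead proves $(3)\Rightarrow(2)$ in one line using Theorem~\ref{AWEP implies WEP char} (since $\ell^1(\Gamma)^*=\ell^\infty(\Gamma)$ is injective, the $\Gamma$-WEP implies the WEP), then appeals to Lance. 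Your approach is more hands-on but has the virtue of not needing Lance's theorem for the implication $(3)\Rightarrow(1)$.
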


\begin{proof}
The equivalence of (1) and (2) is Lance's theorem \cite{L}. Since $\ell^1(\G)^*=\ell^\infty(\G)$ is injective, the implication (3) $\implies$ (2) follows from Theorem \ref{AWEP implies WEP char}. Now suppose $\Gamma$ is amenable and $C^*_\lm(\Gamma)$ has the WEP. Then the action $\Gamma\acts C^*_\lm(\Gamma)$ is amenable. Hence $C^*_\lm(\Gamma)$ has the $\Gamma$-WEP by Proposition \ref{amen+WEP-->G-WEP}.
\end{proof}

We will prove in the next section that the above conditions are also equivalent to $C^*_\lm(\Gamma)$ having the $A(\Gamma)$-WEP.

In the case of a general locally compact group $G$, the implication (2) $\Rightarrow$ (1) above is known to not hold in general, e.g., for $G=SL(2,\bR)$.

\subsection{On the kernel of actions with the WEP}

Let $R(\Gamma)$ denote the amenable radical of $\Gamma$, that is, the unique amenable normal subgroup of $\Gamma$ that contains all other amenable normal subgroups of $\Gamma$. By \cite[Proposition 7]{Fur} and \cite[Theorem 3.11]{KalKen}, $R(\Gamma) = \ker\left(\Gamma\act \cI_\Gamma(\bC)\right)$. So $\ker\left(\Gamma\act \cI_\Gamma( A)\right)$ may be considered as a generalized amenable radical of $\Gamma$ corresponding to the action $\Gamma \curvearrowright  A$.

\begin{prop} \label{radical containment}
For any $\Gamma$-$C^*$-algebra $ A$, $\ker\left(\Gamma\act \cI_\Gamma( A)\right) \subseteq R(\Gamma)$. In particular, $\ker\left(\Gamma\act \cI_\Gamma( A)\right)$ is amenable.
\end{prop}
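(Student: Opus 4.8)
The strategy is to observe that the statement follows once we know that $\cI_\Gamma(\bC)$ embeds $\Gamma$-equivariantly into $\cI_\Gamma(A)$, together with the identity $R(\Gamma) = \ker\!\left(\Gamma\act \cI_\Gamma(\bC)\right)$ recalled just before the proposition. First I would note that, since $A$ is unital and $\Gamma$ acts by unital $*$-automorphisms, $\Gamma$ fixes $1_A$, so $\bC 1_A$ is a $\Gamma$-invariant operator subsystem and the unital inclusion $\bC \hookrightarrow A$ is a completely isometric $\Gamma$-map. Composing with the canonical $\Gamma$-embedding $A \hookrightarrow \cI_\Gamma(A)$ exhibits $\bC$ (the scalars, with trivial $\Gamma$-action) as a $\Gamma$-operator subsystem of $\cI_\Gamma(A)$.

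Next I would produce a $\Gamma$-embedding $\cI_\Gamma(\bC) \hookrightarrow \cI_\Gamma(A)$ restricting to this inclusion. Since $\cI_\Gamma(A)$ is $\Gamma$-injective, the $\Gamma$-map $\bC \to \cI_\Gamma(A)$ extends along the $\Gamma$-embedding $\bC \hookrightarrow \cI_\Gamma(\bC)$ to a $\Gamma$-map $\phi: \cI_\Gamma(\bC) \to \cI_\Gamma(A)$. Its restriction to $\bC$ is the identity, hence completely isometric, so by the $\Gamma$-equivariant version of the essentiality property of injective envelopes (Hamana \cite{Ham85}; cf. the non-equivariant statement in Theorem~\ref{t:injenv}(1)), $\phi$ is itself a $\Gamma$-embedding.

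Finally, equivariance and injectivity of $\phi$ yield the desired containment: if $t \in \Gamma$ acts trivially on $\cI_\Gamma(A)$, then for every $x \in \cI_\Gamma(\bC)$ we have $\phi(t \cdot x) = t \cdot \phi(x) = \phi(x)$, whence $t \cdot x = x$; thus $\ker\!\left(\Gamma\act \cI_\Gamma(A)\right) \subseteq \ker\!\left(\Gamma\act \cI_\Gamma(\bC)\right) = R(\Gamma)$. Being a subgroup of the amenable group $R(\Gamma)$, $\ker\!\left(\Gamma\act \cI_\Gamma(A)\right)$ is amenable. I do not expect a serious obstacle here; the only points requiring care are invoking the correct $\Gamma$-equivariant forms of Hamana's rigidity and essentiality for injective envelopes, and observing that the copy of $\bC$ inside $\cI_\Gamma(A)$ is genuinely the trivially-acted-upon scalars — which is precisely where unitality of the action is used.
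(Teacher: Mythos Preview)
Your proposal is correct and follows essentially the same approach as the paper: both arguments rest on the existence of a $\Gamma$-embedding $\cI_\Gamma(\bC)\hookrightarrow\cI_\Gamma(A)$ together with the identification $R(\Gamma)=\ker(\Gamma\act\cI_\Gamma(\bC))$. The paper's proof is a one-line sketch asserting this embedding exists, whereas you have spelled out the construction (via $\Gamma$-injectivity of $\cI_\Gamma(A)$ and $\Gamma$-essentiality of $\cI_\Gamma(\bC)$) and the kernel-containment deduction in full.
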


\proof
Since $\cI_\Gamma(\mathbb C) \subseteq \cI_\Gamma( A)$ via a $\G$-embedding, the result follows from the fact mentioned above that $R(\Gamma)$ is the kernel of the action $\Gamma \curvearrowright \cI_\Gamma(\mathbb C)$.
\endproof

Evidently, the inclusion $\ker\left(\Gamma\act \cI_\Gamma( A)\right) \subseteq \ker\left(\Gamma\act  A\right)$ always holds. The converse inclusion also clearly holds if $ A$ is $\Gamma$-injective. We show below that it holds under the generally weaker assumption that $ A$ has the $\Gamma$-WEP.

\begin{thm} \label{GWEP implies equal kernels}
If $ A$ has the $\Gamma$-WEP, then $\ker\left(\Gamma\act \cI_\Gamma( A)\right) = \ker\left(\Gamma\act  A\right)$.
\end{thm}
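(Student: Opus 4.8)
The plan is to observe that the substantive inclusion $\ker\left(\Gamma\act A\right)\subseteq\ker\left(\Gamma\act\cI_\Gamma(A)\right)$ follows immediately from the embedding supplied by the $\Gamma$-WEP, once two elementary facts about kernels of actions are isolated; the reverse inclusion $\ker\left(\Gamma\act\cI_\Gamma(A)\right)\subseteq\ker\left(\Gamma\act A\right)$ is already recorded in the paragraph preceding the statement, so nothing new is needed there.

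The first fact I would note is purely formal: if $X$ and $Y$ are $\Gamma$-operator systems and $\iota\colon X\hookrightarrow Y$ is an injective $\Gamma$-equivariant map, then $\ker\left(\Gamma\act Y\right)\subseteq\ker\left(\Gamma\act X\right)$. Indeed, if $t\in\Gamma$ fixes every element of $Y$, then for $x\in X$ we have $\iota(t\cdot x)=t\cdot\iota(x)=\iota(x)$, so $t\cdot x=x$ by injectivity. The second fact is that $\ker\left(\Gamma\act A\right)=\ker\left(\Gamma\act A^{**}\right)$: if $\alpha_t=\id_A$ then $\alpha_t^{**}=\id_{A^{**}}$, and conversely restricting $\alpha_t^{**}$ to $A$ recovers $\alpha_t$, so the two kernels coincide.

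With these in hand, I would invoke Theorem~\ref{G-WEP chars}: since $A$ has the $\Gamma$-WEP, condition (7) gives a $\Gamma$-embedding $\iota\colon\cI_\Gamma(A)\hookrightarrow A^{**}$ (restricting to the canonical inclusion on $A$, though only equivariance and injectivity are used). Applying the first fact with $X=\cI_\Gamma(A)$ and $Y=A^{**}$ yields $\ker\left(\Gamma\act A^{**}\right)\subseteq\ker\left(\Gamma\act\cI_\Gamma(A)\right)$, and the second fact rewrites the left-hand side as $\ker\left(\Gamma\act A\right)$. Combined with the always-valid reverse inclusion this gives the claimed equality.

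There is no genuine obstacle here; all the weight is carried by the equivalence (1)$\Longleftrightarrow$(7) of Theorem~\ref{G-WEP chars}, which is already proved. The only point worth a sentence of care is that the $\Gamma$-action on $\cI_\Gamma(A)$ entering the definition of its kernel is precisely the action with respect to which the embedding $\iota$ is equivariant — i.e. the canonical action on Hamana's $\Gamma$-injective envelope extending that on $A$ — so that the formal lemma on injective equivariant maps applies verbatim.
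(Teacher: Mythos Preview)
Your proposal is correct and follows essentially the same line as the paper's proof: both use the $\Gamma$-embedding $\cI_\Gamma(A)\hookrightarrow A^{**}$ furnished by the $\Gamma$-WEP (via Theorem~\ref{G-WEP chars}(7)), observe that an element acting trivially on $A$ acts trivially on $A^{**}$ (the paper phrases this as ``by weak*-continuity'', you as $\alpha_t^{**}=\id$), and then pull this back through the equivariant embedding to $\cI_\Gamma(A)$. The only difference is presentational --- you isolate the two formal lemmas explicitly, whereas the paper runs the argument in a single paragraph.
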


\proof
As noted above, $\ker\left(\Gamma\act \cI_\Gamma( A)\right) \subseteq \ker\left(\Gamma\act  A\right)$ always holds.

For the other inclusion, suppose $ A$ has the $\Gamma$-WEP. Then there is a $\Gamma$-embedding $\vphi: \cI_\Gamma( A) \hookrightarrow  A^{**}$ such that $\vphi$ restricts to the canonical inclusion on $ A$. If $s \in \ker\left(\Gamma\act  A\right)$, then by weak*-continuity, $s$ is also in the kernel of the action $\Gamma \curvearrowright  A^{**}$. Thus $s$ also acts trivially on everything in $\cI_\Gamma( A)$, i.e., $s \in \ker\left(\Gamma\act \cI_\Gamma( A)\right)$.
\endproof

The converse of Theorem \ref{GWEP implies equal kernels} is not true: if $\Gamma$ is trivial, then $\ker\left(\Gamma\act \cI_\Gamma( A)\right) =\ker\left(\Gamma\act  A\right)=\Gamma$ for all $ A$.

\begin{example}
\begin{enumerate}
\item
Consider the trivial action $\Gamma \curvearrowright \mathbb C$. Then $\ker\left(\Gamma\act \cI_\Gamma(\bC)\right) = R(\Gamma)$ and $\ker\left(\Gamma\act \mathbb{C}\right) = \Gamma$. So for $\Gamma$ non-amenable, this gives an example when $\ker\left(\Gamma\act \cI_\Gamma( A)\right) \not= \ker\left(\Gamma\act  A\right)$.
\item
Consider the canonical action $\Gamma \curvearrowright \ell^\infty(\Gamma)$. Since $\ell^\infty(\Gamma)$ is $\Gamma$-injective, it follows $\ker\left(\Gamma\act \cI_\Gamma(\ell^\infty(\Gamma))\right) = \ker\left(\Gamma\act \ell^\infty(\Gamma)\right)$, and it is easy to see that these are both the trivial subgroup. So for $\Gamma$ non-amenable, this gives an example when $\ker\left(\Gamma\act \cI_\Gamma( A)\right) \not= R(\Gamma)$.
\item
Consider the canonical inner action $\Gamma \curvearrowright C^*_\lambda(\Gamma)$. It is straightforward to check that $\ker\left(\Gamma\act C^*_\lambda(\Gamma)\right)$ is the center $\mathcal Z(\Gamma)$. If $\Gamma$ is amenable, then $C^*_\lambda(\Gamma)$ has the $\Gamma$-WEP by Theorem \ref{G amen <-> C*rG has G-WEP}, so we get $\ker\left(\Gamma\act \cI_\Gamma(C^*_\lambda(\Gamma))\right) = \mathcal Z(\Gamma)$ by Theorem \ref{GWEP implies equal kernels}. Hence for $\Gamma$ amenable and non-abelian, this gives an example when $\ker\left(\Gamma\act \cI_\Gamma( A)\right)$ is nontrivial and not equal to $R(\Gamma)$.
\end{enumerate}
\end{example}

\begin{cor}
If $\Gamma$ is a discrete group with trivial amenable radical and $ A$ is a $\Gamma$-$C^*$-algebra with the $\Gamma$-WEP, then the action $\Gamma \curvearrowright  A$ is faithful.
\end{cor}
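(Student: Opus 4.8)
This corollary should be an immediate consequence of the two preceding results, \propref{radical containment} and \thmref{GWEP implies equal kernels}, so the ``proof'' is really just a matter of chaining them together correctly. The plan is to track the kernel $\ker\left(\Gamma\act A\right)$ of the action through the $\Gamma$-injective envelope.

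\textbf{Step 1.} Since $A$ has the $\Gamma$-WEP, apply \thmref{GWEP implies equal kernels} to obtain the equality
\[
\ker\left(\Gamma\act \cI_\Gamma(A)\right) = \ker\left(\Gamma\act A\right).
\]
This is the step that genuinely uses the $\Gamma$-WEP hypothesis: it is exactly where one exploits the $\Gamma$-embedding $\cI_\Gamma(A)\hookrightarrow A^{**}$ restricting to the inclusion on $A$ together with weak*-continuity of the bidual action.

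\textbf{Step 2.} Apply \propref{radical containment} to the $\Gamma$-$C^*$-algebra $A$, which gives the unconditional containment
\[
\ker\left(\Gamma\act \cI_\Gamma(A)\right) \subseteq R(\Gamma).
\]
Combining Steps 1 and 2 yields $\ker\left(\Gamma\act A\right)\subseteq R(\Gamma)$. Since by hypothesis $R(\Gamma)=\{e\}$, the kernel of $\Gamma\act A$ is trivial, i.e., the action is faithful.

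\textbf{Main obstacle.} There is essentially no obstacle at this stage: all the substantive content has been absorbed into \propref{radical containment} (which rests on the identification $R(\Gamma)=\ker\left(\Gamma\act\cI_\Gamma(\bC)\right)$ and the $\Gamma$-embedding $\cI_\Gamma(\bC)\hookrightarrow\cI_\Gamma(A)$) and into \thmref{GWEP implies equal kernels}. The only point requiring a moment's care is to make sure the containment direction in \propref{radical containment} and the equality in \thmref{GWEP implies equal kernels} compose the right way, which they do.
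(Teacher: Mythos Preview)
Your proof is correct and follows essentially the same approach as the paper: both combine \propref{radical containment} and \thmref{GWEP implies equal kernels} to conclude that $\ker(\Gamma\act A)=\ker(\Gamma\act\cI_\Gamma(A))\subseteq R(\Gamma)=\{e\}$. The only difference is the order in which the two ingredients are invoked, which is immaterial.
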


\proof
If $R(\Gamma)$ is trivial, then so is $\ker\left(\Gamma\act \cI_\Gamma( A)\right)$ by Proposition \ref{radical containment}. The result thus follows from Theorem \ref{GWEP implies equal kernels}.
\endproof

\subsection{Amenability of stabilizers}The theorem below can be considered as a generalization of Lance's result \cite{L} that $C^*_\lambda(\Gamma)$ has the WEP iff $\Gamma$ is amenable.

\begin{thm}
Let $\Gamma\acts X$ be a minimal action on a compact Hausdorff space $X$ such that $\Gamma\acts C(X)$ has the WEP. Then the stabilizer of any point $x\in X$ is amenable.

In particular, if $\Gamma\ltimes C(X)$ has the WEP then the stabilizer of any point $x\in X$ is amenable.
\end{thm}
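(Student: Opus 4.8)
The plan is to reduce the theorem to the existence of a left $\Gamma_x$-invariant mean on $\ell^\infty(\Gamma_x)$, where $\Gamma_x$ denotes the stabilizer of $x$; such a mean is by definition the amenability of $\Gamma_x$. Since the left-translation $\Gamma_x$-orbits in $\Gamma$ are precisely the right cosets $\Gamma_x\backslash\Gamma$, choosing a transversal (containing $e$) produces a $\Gamma_x$-equivariant retraction $p\colon\Gamma\to\Gamma_x$, and hence a $\Gamma_x$-equivariant unital $*$-homomorphism $E\colon\ell^\infty(\Gamma_x)\to\ell^\infty(\Gamma)$, $g\mapsto g\circ p$. Composing a $\Gamma_x$-invariant mean on $\ell^\infty(\Gamma)$ with $E$ gives a left-invariant mean on $\ell^\infty(\Gamma_x)$, so it is enough to construct a $\Gamma_x$-equivariant unital completely positive map $\ell^\infty(\Gamma)\to\bC$, with $\bC$ carrying the trivial $\Gamma_x$-action.

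To build such a map I would use three equivariant pieces. Write $M=C(X)^{**}$ with its bidual $\Gamma$-action. First, let $\omega_x\colon C(X)\to\bC$ be evaluation at $x$ and $\omega_x^{**}\colon M\to\bC$ its normal extension; because $\omega_x\circ\alpha_s=\omega_x$ for every $s\in\Gamma_x$, the state $\omega_x^{**}$ is $\Gamma_x$-invariant. This is the only point in the whole argument where the chosen point $x$ — rather than all of $\Gamma$ — enters, and it is exactly what will confine the conclusion to the stabilizer. Second, the hypothesis that $\Gamma\acts C(X)$ has the WEP furnishes a $\Gamma$-map $\Phi\colon\ell^\infty(\Gamma,M)\to M$, where $\ell^\infty(\Gamma,M)$ carries the translation action $(t\cdot f)(s)=f(t^{-1}s)$ (only that $\Phi$ is a $\Gamma$-map is used, not its normalization on $C(X)$). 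Third, the canonical unital embedding $\iota\colon\ell^\infty(\Gamma)\to\ell^\infty(\Gamma,M)$, $\iota(g)(s)=g(s)1_M$, is $\Gamma$-equivariant for these translation actions. I would then set
\[
m\;:=\;\omega_x^{**}\circ\Phi\circ\iota\circ E\colon\ \ \ell^\infty(\Gamma_x)\longrightarrow\bC .
\]
As a composition of unital completely positive maps $m$ is a state; $E$, $\iota$, $\Phi$ are $\Gamma_x$-equivariant (the last two even $\Gamma$-equivariant) and $\omega_x^{**}$ is $\Gamma_x$-invariant, so $m$ intertwines left translation on $\ell^\infty(\Gamma_x)$ with the trivial action on $\bC$; that is, $m$ is a left-invariant mean on $\ell^\infty(\Gamma_x)$, and therefore $\Gamma_x$ is amenable.

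For the ``in particular'' clause: if $\Gamma\ltimes C(X)$ has the WEP then $C(X)$ has the $\Gamma$-WEP by the earlier result that $\Gamma\ltimes A$ having the WEP implies $A$ has the $\Gamma$-WEP, and any $C^*$-algebra with the $\Gamma$-WEP carries an action with the WEP (the observation recorded just after the definition of the WEP for actions); hence the first part applies. I do not anticipate a genuine obstacle in this proof: all the maps are standard and the verifications are routine. The only care needed is bookkeeping — one must use that the $\Gamma$-action on $\ell^\infty(\Gamma,M)$ built into the WEP of an action is pure translation, so that $\iota$ and $m$ are honestly equivariant, and one must keep track of the fact that $\omega_x^{**}$ is invariant only under $\Gamma_x$ and not under $\Gamma$ (when $X$ is a point, $\Gamma_x=\Gamma$ and the above recovers Lance's theorem). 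Minimality of $\Gamma\acts X$ is not used in the argument as organized above.
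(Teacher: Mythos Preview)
Your argument is correct and follows the same overall strategy as the paper: embed $\ell^\infty(\Gamma)$ into $\ell^\infty(\Gamma,-)$ via $\iota$, apply the WEP map $\Phi$, and then hit the result with a $\Gamma_x$-invariant state to obtain a $\Gamma_x$-invariant mean on $\ell^\infty(\Gamma)$. The difference lies in the choice of target and of invariant state. The paper first realizes $C(X)$ inside $\ell^\infty(\Gamma/\Gamma_x)$ via the orbit map $f\mapsto(g\mapsto f(gx))$, uses minimality to ensure this representation $\pi$ is faithful, and then takes $\delta_{\Gamma_x}$ (evaluation at the identity coset) as the $\Gamma_x$-invariant state on $\pi(C(X))''\subseteq\ell^\infty(\Gamma/\Gamma_x)$. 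You instead work directly with $M=C(X)^{**}$ and use the normal extension $\omega_x^{**}$ of point evaluation as the $\Gamma_x$-invariant state. Your route is more economical: it avoids the orbit representation entirely and, as you observe, does not use minimality. The paper's detour through $\ell^2(\Gamma/\Gamma_x)$ buys a concrete picture of where the invariant mean comes from (it is literally evaluation at the base coset), but at the cost of an extra hypothesis. Your transversal map $E$ making the passage from a $\Gamma_x$-invariant state on $\ell^\infty(\Gamma)$ to one on $\ell^\infty(\Gamma_x)$ explicit is a nice touch; the paper leaves that step implicit. The ``in particular'' clause is handled identically in spirit.
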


\begin{proof}
Let $x\in X$, and let $\Lambda=\{g\in \G: gx = x\}$ be the stabilizer subgroup of $x$. Denote by $\cP_x: C(X) \to \ell^\infty(\Gamma)$ the injective 
*-homomorphism $\cP_x(f)(g) := f(gx)$ for $g\in \Gamma$.
For any $g\in \Gamma$, $h\in\Lambda$, and $f\in C(X)$ we have 
\[
\cP_x(f)(gh) = f(ghx) = f(gx) = \cP_x(f)(g) ,
\]
which shows $\cP_x(C(X))\subset \ell^\infty(\Gamma/\Lambda)$. Hence this yields a faithful representation $\pi: C(X) \to B\left(\ell^2(\Gamma/\Lambda)\right)$, and we have $$\overline{j_\pi(C(X))}^{\text{weak*}}\subset j_\id\left(\ell^\infty(\Gamma/\Lambda)\right),$$ where $\id: \ell^\infty(\Gamma/\Lambda) \to B\left(\ell^2(\Gamma/\Lambda)\right)$ is the canonical inclusion. Now suppose the action $\Gamma\acts C(X)$ has the WEP, that is, there exists a $\Gamma$-map $\Phi: \ell^\infty(\Gamma, \pi(C(X))'') \to \overline{j_\pi(C(X))}^{\text{weak*}}$. Then $\varphi=\delta_\Lambda\circ j_\id^{-1}\circ\Phi\circ\iota$ is a $\Lambda$-invariant state on $\ell^\infty(\Gamma)$, where $\iota: \ell^\infty(\Gamma) \to \ell^\infty\left(\Gamma, B\left(\ell^2(\Gamma/\Lambda)\right)\right)$ is the $\Gamma$-embedding defined in \eqref{linf-embed}. Hence, $\Lambda$ is amenable.
\end{proof}

\section{Amenable actions and the $A(G)$-WEP}

In this section we characterize amenable dynamical systems through the $A(G)$-module structure of their associated crossed products. Our techniques are streamlined using the perspective of co-actions, so we begin with a quick overview of the relevant definitions. This perspective also hints at potential quantum group generalizations of our results. We postpone this investigation for future work.

Let $G$ be a locally compact group. The adjoint of convolution $\LO\pten\LO\rightarrow\LO$ is a co-associative co-multiplication $\Delta:\LI\rightarrow\LI\oten\LI$ satisfying $\Delta(f)(s,t)=f(st)$, for all $f\in\LI$. There are left and right fundamental unitaries $W,V\in\mc{B}(L^2(G\times G))$ which implement $\Delta$ in the sense that
$$\Delta(f)=W^*(1\ten M_f)W=V(M_f\ten 1)V^*, \ \ \ f\in\LI.$$
They are given respectively by
$$W\xi(s,t)=\xi(s,s^{-1}t), \ \ \  V\xi(s,t)=\xi(st,t)\delta_G(t)^{1/2}, \ \ \ s,t\in G, \ \xi\in L^2(G\times G),$$
where $\delta_G$ is the modular function. These fundamental unitaries are intimately related to the left and right regular representations $\lm,\rho:G\rightarrow\BLT$ given by
$$\lm(s)\xi(t)=\xi(s^{-1}t), \ \ \rho(s)\xi(t)=\xi(ts)\delta_G(s)^{1/2}, \ \ \ s,t\in G, \ \xi\in\LT.$$
It follows that $W\in\LI\oten VN(G)$ and $V\in VN(G)'\oten\LI$.

The adjoint of pointwise multiplication $A(G)\pten A(G)\rightarrow A(G)$ defines a co-associative co-multiplication $\wh{\Delta}:VN(G)\rightarrow VN(G)\oten VN(G)$ satisfying $\wh{\Delta}(\lm(s))=\lm(s)\ten\lm(s)$, $s\in G$. There are left and right fundamental unitaries $\h{W},\h{V}\in\mc{B}(L^2(G\times G))$ which implement the co-product via
$$\h{\Delta}(x)=\h{W}^*(1\ten x)\h{W}=\h{V}(x\ten 1)\h{V}^*, \ \ \ x\in VN(G).$$
They are given specifically by 
$$\h{W}\xi(s,t)=\xi(ts,t), \ \ \ \h{V}\xi(s,t)=W\xi(s,t)=\xi(s,s^{-1}t), \ \ \ s,t\in G, \ \xi\in L^2(G\times G).$$
Note that $\h{W} \in VN(G) \overline{\otimes} L^\infty(G)$. 

Both co-products $\Delta$ and $\h{\Delta}$ admit left and right extensions to $\BLT$ via the fundamental unitaries. For instance,
$$\Delta^l: \BLT\ni T\mapsto W^*(1\ten T)W\in \LI\oten\BLT$$
and
\begin{equation}\label{e:rightext}\Delta^r:\BLT\ni T\mapsto V(T\ten 1)V^*\in\BLT\oten\LI.\end{equation}
These maps, in turn, yield operator $\LO$-module structures on $\BLT$ given by
$$T\cdot f = (f\ten\id)\Delta^l(T) = \int_G f(s)\lm(s)^*T\lm(s) \ ds$$
and
$$f\cdot T = (\id\ten f)\Delta^r(T) = \int_Gf(s)\rho(s)T\rho(s)^* \ ds,$$
for $f\in\LO$ and $T\in\BLT$. Analogously, the lifted co-products $\h{\Delta}^l$ and $\h{\Delta}^r$ yield an operator $A(G)$-bimodule structure on $\BLT$. In this case the left and right module structures coincide since $\h{V}=W=\sigma\h{W}^*\sigma$. Indeed, for every $u\in A(G)$ and $T\in\BLT$ we have
\begin{align*}T\cdot u&=(u\ten\id)\h{\Delta}^l(T)=(u\ten\id)\h{W}^*(1\ten T)\h{W}\\
&=(\id\ten u)W(T\ten 1)W^*\\
&=(\id\ten u)\h{V}(T\ten 1)\h{V}\\
&=u\cdot T
\end{align*}
When $G$ is discrete, the $A(G)$-module action is nothing but Schur product with the matrix $[u(st^{-1})]_{s,t\in G}$, where $u\in A(G)$.

Viewing the extended co-multiplication $\Delta^r$ as a map $\BLT\rightarrow\BLT\oten\BLT$, its pre-adjoint defines a completely contractive multiplication on the space of trace-class operators $\TC$ via
\begin{equation*}\rhd:\TC\pten\TC\ni\om\ten\tau\mapsto\om\rhd\tau=\Delta^r_*(\om\ten\tau)\in\TC.\end{equation*}
The resulting bimodule structure on $\BLT$ satisfies
$$T\rhd\rho=(\rho\ten\id)V(T\ten 1)V^*, \ \ \ \rho\rhd T=(\id\ten\rho)V(T\ten 1)V^*=\pi(\rho)\cdot T$$
for $T\in\BLT$, $\rho\in\TC$, where $\pi:\TC\twoheadrightarrow\LO$ is the canonical quotient map given by restriction to $\LI$. Hence, the left $\rhd$-module structure degenerates to the left $\LO$-module structure defined above. The analogous construction exists for $\h{\Delta}^r$, yielding a dual product $\h{\rhd}$ on $\TC$ and a corresponding $\h{\rhd}$-bimodule structure on $\BLT$. Dually to $\rhd$, the left $\h{\rhd}$-module structure degenerates to the left $A(G)$-action.

If $(M,G,\alpha)$ is a $W^*$-dynamical system, the induced normal $*$-homomorphism $\alpha:M\rightarrow\LI\oten M$, $\alpha(x)(s)=\alpha_{s^{-1}}(x)$, $x\in M$, $s\in G$,
is co-associative in the sense that
$$(\Delta\ten\id)\circ \alpha = (\id\ten\alpha)\circ \alpha.$$
The corresponding $\LO$-module structure is determined by
$$x\star f=(f\ten\id)\alpha(x)=\int_G f(s)\alpha_{s^{-1}}(x), \ \ \ f\in\LO, \ x\in M.$$

It follows from the fixed point description of $G\bar{\ltimes}M$ (see, e.g., \cite[Theorem 16.1.15]{Ren}) that 
$$G\bar{\ltimes}M=\{T\in\BLT\oten M\mid (\id\ten\alpha_{s^{-1}})(T)=(\Ad(\rho(s))\ten\id)(T) \ \forall \ s\in G\}.$$
By point-weak* continuity of $W^*$-dynamical systems, the condition $(\id\ten\alpha_{s^{-1}})(T)=(\Ad(\rho(s))\ten\id)(T)$ for all $s\in G$ is equivalent to 
$$(\id\ten f\ten\id)(\id\ten\alpha)(T)=(\id\ten f\ten\id)(\Delta^r\ten\id)(T), \ \ \ f\in\LO$$
(by approximating point masses by suitable nets in $\LO$). Hence,
$$G\bar{\ltimes} M=\{X\in\BLT\oten M\mid (\id\ten\alpha)(X)=(\Delta^r\ten\id)(X)\}$$

The system $(M,G,\alpha)$ admits a dual co-action 
$$\h{\alpha}:G\bar{\ltimes}M\rightarrow VN(G)\oten (G\bar{\ltimes}M)$$
of $VN(G)$ on the crossed product, given by
\begin{equation}\label{e:coaction}\h{\alpha}(X)=(\h{W}^*\ten 1)(1\ten X)(\h{W}\ten 1), \ \ \ X\in G\bar{\ltimes}M.\end{equation}
On the generators we have $\h{\alpha}(\hat{x}\ten 1)=(\h{W}^*(1\ten\hat{x})\h{W})\ten 1$, $\hat{x}\in VN(G)$ and $\h{\alpha}(\alpha(x))=1\ten\alpha(x)$, $x\in M$. Moreover, by \cite[Theorem 2.7]{Vaes}
$$(G\bar{\ltimes}M)^{\h{\alpha}}=\{X\in G\bar{\ltimes}M \mid \h{\alpha}(X)=1\ten X\}=\alpha(M).$$
This co-action yields a canonical right operator $A(G)$-module structure on the crossed product $G\bar{\ltimes}M$ via
$$X\cdot u=(u\ten\id)\h{\alpha}(X), \ \ \ X\in G\bar{\ltimes}M, \ u\in A(G).$$

Assuming $M$ is standardly represented on $H$, there exists a strongly continuous unitary representation $u:G\rightarrow \BH$ and corresponding generator $U\in\LI\oten\BH$ such that $\alpha(x)=U^*(1\ten x)U$, $x\in M$ \cite[Corollary 3.11]{H}. At the level of vectors $\xi\in C_c(G,H)\subseteq \LT\ten H$ we have
\begin{equation}\label{e:1}\alpha(x)\xi(s)=U^*(1\ten x)U\xi(s)=u_s^*xu_s\xi(s), \ \ \ s\in G, \ x\in M.\end{equation}

A $W^*$-dynamical system $(M,G,\alpha)$ is \textit{amenable} if there exists a projection of norm one $P:\LI\oten M\rightarrow M$ such that $P\circ(\lambda_s\ten \alpha_s)=\alpha_s\circ P$, $s\in G$, where $\lambda$ also denotes the left translation action on $\LI$. For example, $(\LI,G,\lm)$ is always amenable, and $G$ is amenable if and only if the trivial action $G\acts\{x_0\}$ is amenable, in which case $P$ becomes a left invariant mean on $\LI$. When $G$ is second countable and $M=L^\infty(X,\mu)$ for a regular $G$-space $(X,\mu)$ (see \cite[Definition 2.1.1]{Mon} for a definition), it is known that that $(M,G,\alpha)$ is amenable if and only if $M$ is relatively $G$-injective \cite[Theorem 5.7.1]{Mon}. At the level on $\LO$-modules, we now show that this equivalence holds in general. 

\begin{prop}\label{p:Zimmer} A $W^*$-dynamical system $(M,G,\alpha)$ is amenable if and only if $M$ is relatively $\LO$-injective. 
\end{prop}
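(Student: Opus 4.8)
The plan is to reduce both directions of the equivalence to the existence of a single $G$-equivariant conditional expectation onto the image of the co-action $\alpha$. First I would identify the module $\mc{CB}(\LO,M)$: since $M=(M_*)^*$, there are completely isometric, weak*-homeomorphic identifications $\mc{CB}(\LO,M)\cong(\LO\pten M_*)^*\cong\LI\oten M$, using $\LO=(\LI)_*$ and the Effros--Ruan formula $(\LI)_*\pten M_*=(\LI\oten M)_*$. Under this identification the canonical embedding $j_M\colon M\to\mc{CB}(\LO,M)$ becomes exactly the co-action $\alpha\colon M\to\LI\oten M$, and the right $\LO$-module structure on $\mc{CB}(\LO,M)$ becomes the module structure on $\LI\oten M$ which on a point mass $\delta_s$ reads $F\cdot\delta_s=(\lambda_{s^{-1}}\ten\id)(F)$, matching $x\star\delta_s=\alpha_{s^{-1}}(x)$ on $M$. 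Because $\alpha$ is injective, $M$ is faithful in $\mathbf{mod}\,\LO$, so by the remark after the definition of relative injectivity, $M$ is relatively $\LO$-injective if and only if there is a completely contractive right $\LO$-module map $\Phi\colon\LI\oten M\to M$ with $\Phi\circ\alpha=\id_M$. Writing the module identities as weak*-convergent integrals and approximating point masses by nets in $\LO$ (as in the verification of the fixed-point description of $G\bar{\ltimes}M$ recalled earlier in this section), one checks that a bounded such $\Phi$ is an $\LO$-module map precisely when it is $G$-equivariant for the left-translation action $\lambda\ten\id$ on $\LI\oten M$ and $\alpha$ on $M$; recall that $\alpha$ is itself equivariant for these, $(\lambda_s\ten\id)\circ\alpha=\alpha\circ\alpha_s$.

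Next I would route both implications through a conditional expectation onto $\alpha(M)$. Given $\Phi$ as above, the map $E:=\alpha\circ\Phi\colon\LI\oten M\to\alpha(M)\subseteq\LI\oten M$ satisfies $E^2=E$, so it is a norm-one idempotent onto the von Neumann subalgebra $\alpha(M)$, hence a conditional expectation by Tomiyama's theorem, and it is $G$-equivariant for $\lambda\ten\id$ since $\Phi$ is and $\alpha$ intertwines $\alpha_s$ with $\lambda_s\ten\id$. Conversely, from any $G$-equivariant conditional expectation $E\colon(\LI\oten M,\lambda\ten\id)\to\alpha(M)$ one recovers $\Phi:=\alpha^{-1}\circ E$, a completely contractive $G$-equivariant map with $\Phi\circ\alpha=\id_M$, hence (by the previous paragraph) a morphism in $\mathbf{mod}\,\LO$ splitting $j_M$. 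So $M$ is relatively $\LO$-injective if and only if such an $E$ exists.

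Finally I would match this with amenability via the implementing unitary. Represent $M$ standardly on $H$ and take $U\in\LI\oten\BH$ with $\alpha(x)=U^*(1\ten x)U$, so that each $u_s$ satisfies $u_s x u_s^*=\alpha_s(x)$. Then $\Ad(U)$ restricts to a normal $*$-automorphism of $\LI\oten M$, namely $\{T(s)\}_s\mapsto\{\alpha_s(T(s))\}_s$; it carries $\alpha(M)$ onto $1\ten M$, and a direct computation shows it intertwines $\lambda\ten\id$ with the diagonal action $\lambda\ten\alpha$. Hence $E\mapsto\Ad(U)\circ E\circ\Ad(U^*)$ is a bijection between $G$-equivariant conditional expectations $(\LI\oten M,\lambda\ten\id)\to\alpha(M)$ and norm-one projections $P\colon\LI\oten M\to 1\ten M\cong M$ satisfying $P\circ(\lambda_s\ten\alpha_s)=\alpha_s\circ P$ (using Tomiyama's theorem once more, now to see that any such $P$ is automatically completely contractive, so that the two notions really do correspond). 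The latter data is precisely what defines amenability of $(M,G,\alpha)$, so chaining the three steps yields the proposition.

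I expect the only genuine difficulty to be organizational: keeping straight the two embedded copies $\alpha(M)$ and $1\ten M$ of $M$ inside $\LI\oten M$, the two actions $\lambda\ten\id$ and $\lambda\ten\alpha$, and the directions of the intertwiners, together with the standard but fussy verification --- via weak*-continuous integration against $\LO$ --- that $\LO$-module maps between these von Neumann algebras are exactly the $G$-equivariant ones.
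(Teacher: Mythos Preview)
Your overall architecture matches the paper's: identify $\mc{CB}(\LO,M)$ with $\LI\oten M$ so that $j_M=\alpha$, and use the unitary implementation $U$ to pass between the actions $\lambda\ten\id$ and $\lambda\ten\alpha$. The gap is the asserted equivalence ``$\Phi$ is an $\LO$-module map $\Longleftrightarrow$ $\Phi$ is $G$-equivariant'' for a bounded (but not normal) $\Phi:\LI\oten M\to M$. Your justification---approximating point masses by nets in $\LO$---requires passing a weak* limit through $\Phi$, which fails since $\Phi$ is not weak*-continuous. The analogy you draw with the fixed-point description of $G\bar{\ltimes}M$ breaks down precisely here: in that argument both maps $(\id\ten\alpha)$ and $(\Delta^r\ten\id)$ are normal, so testing against $\LO$ suffices; your $\Phi$ is not.

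In fact the implication ``$G$-equivariant $\Rightarrow$ $\LO$-module'' is false for non-discrete $G$: already when $M=\bC$ with trivial action it would force every left-invariant mean on $\LI$ to be topologically left-invariant, which is not the case. This is exactly the direction you need for ``amenable $\Rightarrow$ relatively $\LO$-injective'': from the amenability projection $P$ you obtain (via $\Ad(U)$) a $G$-equivariant left inverse to $\alpha$, but you cannot conclude it is an $\LO$-morphism. The paper does not claim this either; it invokes \cite[Lemme 2.1]{ADII}, which produces from a $G$-equivariant norm-one projection onto a $G$-invariant von Neumann subalgebra a (possibly different) $\LO$-equivariant one. The reverse implication ``$\LO$-module $\Rightarrow$ $G$-equivariant'' does hold, but via a faithfulness argument rather than your approximation: one checks $(\Phi((\lambda_s\ten\id)F)-\alpha_s(\Phi(F)))\star f=0$ for every $f\in\LO$ and then uses that $M$ is a faithful $\LO$-module. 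So your converse direction can be salvaged, but the forward direction genuinely needs the extra averaging lemma.
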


\begin{proof} Assume $M$ is standardly represented in $\BH$, and let $U\in \LI\oten\BH$ be the unitary implementation of $\alpha$. By commutativity of $\LI$ and formula (\ref{e:1}) it follows that both $\Ad(U)$ and $\Ad(U^*)$ leave $\LI\oten M$ invariant. Moreover, for every $\xi,\eta\in C_c(G,H)$, $h\in\LI$, $x\in M$, we have
\begin{align*}&\la U^*(\lambda_s\ten\alpha_s(h\ten x))U\xi,\eta\ra=\int_G\la U^*(\lambda_s\ten\alpha_s(h\ten x))U\xi(t),\eta(t)\ra \ dt\\
&=\int_G h(s^{-t}t)\la u_{t^{-1}}\alpha_s(x)u_t\xi(t),\eta(t)\ra \ dt=\int_G h(s^{-t}t)\la u_{t^{-1}s}xu_{s^{-1}t}\xi(t),\eta(t)\ra \ dt\\
&=\int_G h(t)\la u_{t^{-1}}xu_{t}\xi(st),\eta(st)\ra \ dt=\la U^*(h\ten x)U(\lm_{s^{-1}}\ten 1)\xi,(\lm_{s^{-1}}\ten 1)\eta\ra\\
&=\la (\lambda_s\ten\id)(U^*(h\ten x)U)\xi,\eta\ra.
\end{align*}
It follows that $\Ad(U^*)\circ(\lambda_s\ten\alpha_s)=(\lambda_s\ten\id)\circ\Ad(U^*)$. 

Now, suppose $(M,G,\alpha)$ is amenable. Then there exists a projection $P:\LI\oten M\rightarrow M$ of norm one such that $P\circ(\lambda_s\ten \alpha_s)=\alpha_s\circ P$, $s\in G$. Then $Q:=P\circ \Ad(U)$ satisfies
$$Q\circ(\lambda_s\ten\id)=P\circ\Ad(U)\circ(\lambda_s\ten\id)=P\circ(\lambda_s\ten\alpha_s)\circ \Ad(U)=\alpha_s\circ Q, \ \ \ s\in G,$$
and $Q(\alpha(x))=P(1\ten x)=x$. That is, $Q:\LI\oten M\rightarrow M$ is a completely contractive $G$-equivariant left inverse to $\alpha$. The composition $\alpha\circ Q$ is then a $G$-equivariant projection of norm one from $\LI\oten M$ onto the $G$-invariant von Neumann subalgebra $\alpha(M)$. By \cite[Lemme 2.1]{ADII}, there exists an $\LO$-equivariant projection of norm one $\Psi:\LI\oten M\rightarrow\alpha(M)$. The map $\alpha^{-1}\circ\Psi:\LI\oten M\rightarrow M$ is then an $\LO$-module left inverse to $\alpha$. Noting that under the canonical identification $\mathcal{CB}(\LO,M)=(\LO\pten M_*)^*=\LI\oten M$, the action $\alpha$ is the canonical embedding $j_M:M\rightarrow\mathcal{CB}(\LO,M)$. Since $M$ is a faithful $\LO$-module, it follows that $M$ is relatively injective over $\LO$.

Conversely, relative $\LO$-injectivity of $M$ entails the existence of a completely contractive $\LO$-morphism $\Phi:\LI\oten M\rightarrow M$ such that $\Phi\circ\alpha=\id_M$. It follows in a similar manner to the previous paragraph that there is a $G$-equivariant completely contractive left inverse $\Psi$ to $\alpha$. Define $P:\LI\oten M\ni X\mapsto \Psi(U^*XU)\in M$. Then 
$$P\circ (\lambda_s\ten\alpha_s)=\Psi\circ\Ad(U^*)\circ(\lambda_s\ten\alpha_s)=\Psi(\lambda_s\ten\id)\circ\Ad(U^*)=\alpha_s\circ P,$$
and $P(1\ten x)=\Psi(\alpha(x))=x$, so that $P$ is a projection of norm witnessing the amenability of $(M,G,\alpha)$.
\end{proof}

We now establish a perfect duality between amenability of $W^*$-dynamical systems $(M,G,\alpha)$ with $M$ injective and $A(G)$-injectivity of the associated crossed product $G\bar{\ltimes}M$. This generalizes \cite[Corollary 5.3]{C}, which corresponds to $M=\bC$, in which case $G\bar{\ltimes}M=VN(G)$. By virtually the same argument, the equivalence of (2) and (3) below persists to the level of co-actions of co-amenable locally compact quantum groups on von Neumann algebras. This will appear in future work. A similar result for actions of discrete quantum groups on von Neumann algebras was obtained independently in \cite{M}.

\begin{thm}\label{t:A(G)inj}
Let $(M,G,\alpha)$ be a $W^*$-dynamical system with $M$ injective. The following conditions are equivalent:
\begin{enumerate}
\item $(M,G,\alpha)$ is amenable;
\item $G\bar{\ltimes} M$ is $A(G)$-injective;
\item $M$ is $\LO$-injective.
\end{enumerate}
\end{thm}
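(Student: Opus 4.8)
The plan is to deduce the easy equivalence $(1)\Leftrightarrow(3)$ from Proposition~\ref{p:Zimmer}, and to prove $(1)\Leftrightarrow(2)$ by realizing $G\bar{\ltimes}M$ as an $A(G)$-submodule of $\BLT\oten M$ and playing the dual co-action $\wh{\alpha}$ off against the left $\LO$-action on $M$. First, for $(1)\Leftrightarrow(3)$: by Proposition~\ref{p:Zimmer}, $(1)$ is equivalent to relative $\LO$-injectivity of $M$. Since $\alpha\colon M\to\LI\oten M$ is injective, $M$ is a faithful $\LO$-module, so relative $\LO$-injectivity amounts to the existence of an $\LO$-morphism $\varphi\colon\mathcal{CB}(\LO,M)\to M$ with $\varphi\circ j_M=\id_M$; and under the identification $\mathcal{CB}(\LO,M)=(\LO\pten M_*)^*=\LI\oten M$ the embedding $j_M$ is precisely $\alpha$. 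Now $\LI\oten M$ is a spatial tensor product of injective von Neumann algebras, hence an injective dual operator space, and $\LO$ has a contractive approximate identity, so $\mathcal{CB}(\LO,M)$ is $\LO$-injective by Lemma~\ref{l:CB(A,M) A-inj for dual M}. Thus $(1)$ exhibits $M$ as a retract of an $\LO$-injective module, giving $(3)$; the converse is immediate.

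\textit{Setting up $(1)\Leftrightarrow(2)$.} The dual co-action extends to $\BLT\oten M$ via $Y\mapsto(\wh{W}^*\ten1)(1\ten Y)(\wh{W}\ten1)$, yielding an operator $A(G)$-module structure on $\BLT\oten M$ for which $G\bar{\ltimes}M$ is an $A(G)$-submodule; since $\wh{W}\in VN(G)\oten\LI$, a direct computation shows that $A(G)$ acts only on the $\BLT$-leg and trivially on $M$. Consequently the $A(G)$-fixed-point subspaces are $(\BLT\oten M)^{A(G)}=\LI\oten M$ and $(G\bar{\ltimes}M)^{A(G)}=(G\bar{\ltimes}M)^{\wh{\alpha}}=\alpha(M)$, and $\alpha(M)\subseteq\LI\oten M$. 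Moreover $\BLT$, with its canonical $A(G)$-module structure, is $A(G)$-injective (cf.\ \cite{C}), so, $M$ being injective, a routine tensoring argument shows $\BLT\oten M$ is $A(G)$-injective.

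\textit{$(2)\Rightarrow(1)$.} If $G\bar{\ltimes}M$ is $A(G)$-injective, the inclusion $G\bar{\ltimes}M\hookrightarrow\BLT\oten M$ admits an $A(G)$-module retraction $E\colon\BLT\oten M\to G\bar{\ltimes}M$ with $E|_{G\bar{\ltimes}M}=\id$. Being an $A(G)$-module map, $E$ carries $A(G)$-fixed points to $A(G)$-fixed points, hence restricts to a completely contractive projection $\LI\oten M\to\alpha(M)$. The von Neumann subalgebra $\alpha(M)$ is invariant under the point-weak*-continuous action $s\mapsto\lm_s\ten\id$ of $G$ on $\LI\oten M$ (indeed $(\lm_s\ten\id)\alpha(x)=\alpha(\alpha_s(x))$), so by \cite[Lemme~2.1]{ADII} there is an $\LO$-equivariant norm-one projection $\Psi\colon\LI\oten M\to\alpha(M)$. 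Then $\alpha^{-1}\circ\Psi$ is an $\LO$-module left inverse to $\alpha=j_M$, so $M$ is relatively $\LO$-injective, and $(1)$ follows from Proposition~\ref{p:Zimmer}.

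\textit{$(1)\Rightarrow(2)$ — the main obstacle.} As in the proof of Proposition~\ref{p:Zimmer}, amenability yields a completely contractive $G$-equivariant left inverse $Q\colon\LI\oten M\to M$ to $\alpha$ (with $Q\circ(\lm_s\ten\id)=\alpha_s\circ Q$ and $Q\circ\alpha=\id_M$). I would then ``inflate'' $Q$ across the $\BLT$-leg, using the unitary $U$ implementing $\alpha$ \cite{H} together with the fundamental unitary $V$ appearing in the fixed-point description $G\bar{\ltimes}M=\{Y\in\BLT\oten M:(\id\ten\alpha)(Y)=(\Delta^r\ten\id)(Y)\}$, to obtain a completely contractive $E\colon\BLT\oten M\to\BLT\oten M$ with range $G\bar{\ltimes}M$ and $E|_{G\bar{\ltimes}M}=\id$. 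Since the $A(G)$-action lives entirely on the $\BLT$-leg, which the ``$M$-direction'' averaging leaves untouched, $E$ should be an $A(G)$-module map, and then $G\bar{\ltimes}M$ is a retract of the $A(G)$-injective module $\BLT\oten M$, hence $A(G)$-injective. The hard part is precisely this inflation step: one must verify that the averaging behind $Q$ transports to a projection onto the fixed-point algebra $G\bar{\ltimes}M$ that is simultaneously the identity there and $A(G)$-equivariant, which requires careful bookkeeping of the interaction of $U$, $V$, $\Delta^r$ and $\wh{\alpha}$. (Alternatively one may invoke the classical fact that an amenable action on an injective von Neumann algebra has injective crossed product and then upgrade this to relative $A(G)$-injectivity of $G\bar{\ltimes}M$, but that upgrade amounts to essentially the same computation.)
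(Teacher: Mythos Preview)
Your treatment of $(1)\Leftrightarrow(3)$ is correct and essentially matches the paper's. However, both remaining implications have genuine gaps.

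\medskip

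\textbf{$(2)\Rightarrow(1)$: the invocation of \cite[Lemme~2.1]{ADII} is unjustified.} That lemma takes a projection which is already $G$-equivariant and upgrades it to an $\LO$-equivariant one; it does \emph{not} produce an equivariant projection from an arbitrary norm-one projection onto a $G$-invariant subalgebra. (Indeed, if it did, applying it to any state $\LI\to\bC$ would yield an invariant mean for every $G$.) Your restriction $E|_{\LI\oten M}$ is not shown to be $G$-equivariant: the $A(G)$-module property is vacuous there since $A(G)$ acts trivially on $\LI\oten M$. The missing step is Tomiyama's theorem: since $E$ is a norm-one projection onto the von Neumann subalgebra $G\bar{\ltimes}M$, it is a $G\bar{\ltimes}M$-bimodule map, hence a $VN(G)\ten 1$-bimodule map. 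The paper then invokes \cite[Theorem~4.9]{CN} to conclude that $E$ is a right $\LO$-module map; equivalently one could observe that $\Ad(\lm(s)\ten 1)$-equivariance of $E$ restricts to $G$-equivariance on $\LI\oten M$, and then apply \cite[Lemme~2.1]{ADII} as you intended. Either way, Tomiyama is the key input you omit.

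\medskip

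\textbf{$(1)\Rightarrow(2)$: the ``inflation'' is left undone.} You correctly identify the strategy but do not produce the map. The paper's construction is explicit and clean: starting from the $\LO$-module (not merely $G$-equivariant) left inverse $P:\LI\oten M\to M$ to $\alpha$ furnished by Proposition~\ref{p:Zimmer}, set
\[
\Theta(P):=(\id\ten P)\circ(\Delta^r\ten\id):\BLT\oten M\to\BLT\oten M.
\]
The $\LO$-module property of $P$, rewritten as $\alpha\circ P=(\id\ten P)(\Delta\ten\id)$, combines with the fixed-point description $G\bar{\ltimes}M=\{X:(\id\ten\alpha)(X)=(\Delta^r\ten\id)(X)\}$ and co-associativity of $\Delta^r$ to show in one chain of equalities that $\Theta(P)$ is an idempotent with range exactly $G\bar{\ltimes}M$. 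That $\Theta(P)$ is an $A(G)$-module map follows from \cite[Proposition~4.2]{C}, using that $\Theta(P)$ is a right $\rhd$-module map and hence a left $\h{\rhd}$-module map. Note that it is the $\LO$-module property of $P$, not mere $G$-equivariance of your $Q$, that drives the computation; the unitary $U$ plays no role in the inflation. This is the concrete formula and the bookkeeping you were looking for.
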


\begin{proof} $(1)\Longrightarrow(2)$ By Proposition \ref{p:Zimmer}, there exists a completely contractive right $\LO$-module map $P:\LI\oten M\rightarrow M$ such that $P\circ \alpha=\id_M$. Define $\Theta(P):\BLT\oten M\rightarrow\BLT\oten M$ by
$$\Theta(P)=(\id\ten P)(\Delta^r\ten\id),$$
where $\Delta^r$ is the right extension of the co-product on $\LI$ (see (\ref{e:rightext})). It follows that
$$\la\Theta(P)(T),\rho\ten\om\ra=\la P(T\rhd\rho),\om\ra, \ \ \ T\in\BLT\oten M, \ \rho\in\TC, \ \om\in M_*,$$
where we abuse notation by letting $\rhd$ also denote the right action of $\TC$ on the first leg of $\BLT\oten M$, that is, $T\rhd\rho=(\rho\ten\id\ten\id)(\Delta^r\ten\id)(T)$. With this representation it is clear that $\Theta(P)$ is a right $\rhd$-module map. The argument from \cite[Proposition 4.2]{C} (applied to commutative quantum groups) amplifies to $\BLT\oten M$ and shows that $\Theta(P)$ is a left $\h{\rhd}$-module map, where $\rho\h{\rhd}T=(\id\ten\rho\ten\id)(\h{\Delta}^r\ten\id)(T)$. Hence, $\Theta(P)$ is a left (equivalently, right) $A(G)$-module map where the $A(G)$-action is on the first leg of $\BLT\oten M$.

The module property of $P$ is equivalent to $\alpha\circ P=(\id\ten P)(\Delta\ten\id)$, so for every $X\in\BLT\oten M$,
\begin{align*}(\id\ten\alpha)(\Theta(P)(X))&=(\id\ten\alpha)(\id\ten P)(\Delta^r\ten\id)(X)\\
&=(\id\ten\id\ten P)(\id\ten\Delta\ten\id)(\Delta^r\ten\id)(X)\\
&=(\id\ten\id\ten P)(\id\ten\Delta^r\ten\id)(\Delta^r\ten\id)(X)\\
&=(\id\ten\id\ten P)(\Delta^r\ten\id\ten\id)(\Delta^r\ten\id)(X)\\
&=(\Delta^r\ten\id)(\id\ten P)(\Delta^r\ten\id)(X)\\
&=(\Delta^r\ten\id)(\Theta(P)(X)).\end{align*}
On the one hand, this implies
\begin{align*}\Theta(P)(X)&=(\id\ten P)(\id\ten\alpha)(\Theta(P)(X))=(\id\ten P)(\Delta^r\ten\id)(\Theta(P)(X))\\
&=\Theta(P)(\Theta(P)(X),\end{align*}
so that $\Theta(P)$ is a projection of norm one. On the other hand, since 
$$G\bar{\ltimes} M=\{X\in\BLT\oten M\mid (\id\ten\alpha)(X)=(\Delta^r\ten\id)(X)\},$$
the chain of equalities above entails $\Theta(P)(\BLT\oten M)\subseteq G\bar{\ltimes} M$. Moreover, if $X\in G\bar{\ltimes} M$ then
$$\Theta(P)(X)=(\id\ten P)(\Delta^r\ten\id)(X)=(\id\ten P)(\id\ten\alpha)(X)=X,$$
and we see that $\Theta(P)$ is an $A(G)$-equivariant projection of norm one onto $G\bar{\ltimes} M$.

Since $VN(G)$ admits an $A(G)$-invariant state \cite[Theorem 4]{Re}, it follows from (the proof of) \cite[Theorem 5.5]{CN} that $\BLT$ is injective as a left, and hence right, operator $A(G)$-module. Since $M$ is an injective von Neumann algebra, it follows that $\BLT\oten M$ is injective in $\mathbf{mod}A(G)$. Hence, $G\bar{\ltimes} M$ is injective in $\mathbf{mod}A(G)$ via $\Theta(P)$.

$(2)\Longrightarrow(3)$ If $G\bar{\ltimes}M$ is injective in $\mathbf{mod}A(G)$, then there exists an $A(G)$-equivariant projection of norm one $E:\BLT\oten M\rightarrow G\bar{\ltimes}M$. By the $A(G)$-module property we have (see \cite[Corollary 4.3]{C})
$$E(\LI\oten M)\subseteq \LI\oten M\cap G\bar{\ltimes} M=\alpha(M).$$
Hence $P=\alpha^{-1}\circ E:\LI\oten M\rightarrow M$ is a completely contractive left inverse to $\alpha$. Moreover, since $E$ is an $G\bar{\ltimes} M$-bimodule map, it is a $VN(G)\oten 1$-bimodule map, and it follows from \cite[Theorem 4.9]{CN} that $E$ is a right $\LO$-module map. Hence, $P$ is a right $\LO$-module left inverse to $\alpha$, implying $M$ is relatively injective in $\mathbf{mod}\LO$. Since $M$ is also an injective von Neumann algebra, (3) follows from \cite[Proposition 2.3]{C}. 

$(3)\Longrightarrow(1)$ If $M$ is $\LO$-injective then it is relatively $\LO$-injective. Whence, $(M,G,\alpha)$ is amenable by Proposition \ref{p:Zimmer}.
\end{proof} 

As an application of the above results, we obtain a generalized version of \cite[Proposition 4.1]{ADI} to the locally compact setting (when the cocycle is trivial). Note that it is precisely the $A(G)$-module structure in condition $(2)$ which allows for the generalized version: as remarked in \cite[Remarque 4.4 (b)]{ADI}, the verbatim generalization of \cite[Proposition 4.1]{ADI} to the locally compact setting does not hold.

\begin{cor}\label{c:5.3} Let $(M,G,\alpha)$ be a $W^*$-dynamical system. The following conditions are equivalent:
\begin{enumerate}
\item $(M,G,\alpha)$ is amenable;
\item There exists an $A(G)$-module projection of norm one from $\BLT\oten M$ onto $G\bar{\ltimes} M$;
\item For every extension $(N,G,\beta)$ of $(M,G,\alpha)$, there exists a $G$-equivariant projection of norm one from $N$ onto $M$.
\end{enumerate}
\end{cor}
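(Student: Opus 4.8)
The plan is to establish the cycle $(1)\Rightarrow(2)\Rightarrow(3)\Rightarrow(1)$, leaning heavily on \thmref{t:A(G)inj} and \propref{p:Zimmer}. For $(1)\Rightarrow(2)$: if $(M,G,\alpha)$ is amenable, then by \propref{p:Zimmer} there is a completely contractive right $\LO$-module left inverse $P$ to $\alpha$, and the construction of $\Theta(P)=(\id\ten P)(\Delta^r\ten\id)$ carried out in the proof of \thmref{t:A(G)inj} is purely formal --- it nowhere uses injectivity of $M$. So the very same computation shows $\Theta(P):\BLT\oten M\to\BLT\oten M$ is an $A(G)$-equivariant projection of norm one onto $G\bar\ltimes M$, giving $(2)$. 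The only thing to be careful about is to cite that portion of the proof of \thmref{t:A(G)inj} that precedes the invocation of injectivity (the part producing $\Theta(P)$ and establishing its module and idempotency properties), rather than the conclusion of that theorem itself.

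For $(2)\Rightarrow(1)$, I would mimic the first half of the $(2)\Rightarrow(3)$ argument in \thmref{t:A(G)inj}: given an $A(G)$-module projection of norm one $E:\BLT\oten M\to G\bar\ltimes M$, the $A(G)$-module property forces $E(\LI\oten M)\subseteq\LI\oten M\cap G\bar\ltimes M=\alpha(M)$ (via \cite[Corollary 4.3]{C}), so $P:=\alpha^{-1}\circ E|_{\LI\oten M}:\LI\oten M\to M$ is a completely contractive left inverse to $\alpha$; and since $E$ restricts to a $VN(G)\ten 1$-bimodule map, \cite[Theorem 4.9]{CN} makes $E$ a right $\LO$-module map, hence $P$ is too. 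Thus $M$ is relatively $\LO$-injective and \propref{p:Zimmer} gives amenability --- notice that injectivity of $M$ is not needed here either, which is exactly why this corollary can drop that hypothesis.

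The substantive new content is $(1)\Longleftrightarrow(3)$, the equivalence with the ``for every extension'' condition. For $(1)\Rightarrow(3)$, given an extension $(N,G,\beta)$ of $(M,G,\alpha)$ --- meaning a $G$-equivariant unital inclusion $M\hookrightarrow N$ --- amenability of $(M,G,\alpha)$ yields via \propref{p:Zimmer} that $M$ is relatively $\LO$-injective, equivalently (being a faithful module) the canonical embedding $j_M:M\to\mc{CB}(\LO,M)$ is split by an $\LO$-morphism. Then $\beta:N\to\LI\oten N$ composed with an $\LO$-equivariant slice back to $N$, combined with the $\LO$-module map $\LI\oten N\to\LI\oten M$ induced by a $G$-equivariant conditional expectation... but wait --- that expectation is what we are trying to produce. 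The cleaner route: relative $\LO$-injectivity of $M$ gives an $\LO$-morphism $\Phi:\mc{CB}(\LO,M)\to M$ with $\Phi\circ j_M=\id_M$; the inclusion $M\hookrightarrow N$ and $\beta$ produce an $\LO$-morphism $N\to\mc{CB}(\LO,N)$, and one restricts/corestricts appropriately, using that a $G$-equivariant (hence $\LO$-equivariant, by norm convergence of the relevant integral) conditional-expectation-type map is being built rather than assumed. Concretely I expect the argument to run: the relative $\LO$-injectivity of $M$ together with the inclusion $M\hookrightarrow N$ in $\modLOQ$ produces an $\LO$-morphism $N\to M$ restricting to the identity, and then an averaging/symmetrization using the unitary implementation (as in \propref{p:Zimmer}, via $\Ad(U)$) upgrades it to a genuine $G$-equivariant projection of norm one. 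For $(3)\Rightarrow(1)$, apply $(3)$ to the canonical extension $(\LI\oten M,G,\lambda\ten\alpha)$ --- here $M$ embeds $G$-equivariantly via $\alpha$, and $\alpha(M)\subseteq\LI\oten M$ is exactly a $G$-equivariant inclusion --- to obtain a $G$-equivariant projection of norm one $\LI\oten M\to M$, which is precisely the definition of amenability of $(M,G,\alpha)$.

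The main obstacle I anticipate is the $(1)\Rightarrow(3)$ step: turning relative $\LO$-injectivity of $M$ into a $G$-equivariant (not merely $\LO$-equivariant) projection onto $M$ inside an arbitrary extension $N$. The resolution should parallel \propref{p:Zimmer}: use the unitary implementation $U$ of the action to conjugate an $\LO$-module left inverse into a $G$-equivariant one, exploiting that $\Ad(U)$ intertwines the diagonal $G$-action with the action on the first leg, so that $G$-equivariance comes for free once one has $\LO$-equivariance plus the correct conjugation. One should double-check that $N$ being represented compatibly allows the same unitary $U$ to implement $\beta$ on the relevant leg --- but since $(N,G,\beta)$ extends $(M,G,\alpha)$ and $\beta|_M=\alpha$, the implementing unitary for $\beta$ restricts appropriately, and the averaging argument goes through verbatim.
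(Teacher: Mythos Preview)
Your treatment of $(1)\Leftrightarrow(2)$ matches the paper exactly: the $\Theta(P)$ construction from the proof of \thmref{t:A(G)inj} never used injectivity of $M$, and for the reverse direction the $A(G)$-module property of $E$ together with \cite[Corollary 4.3]{C} and \cite[Theorem 4.9]{CN} yields relative $\LO$-injectivity, whence \propref{p:Zimmer} gives amenability. Your $(3)\Rightarrow(1)$ is also the paper's argument, though note a small slip: the $G$-equivariant embedding of $M$ into $(\LI\oten M,\lambda\ten\alpha)$ is $x\mapsto 1\ten x$, not $\alpha$ (the latter intertwines $\alpha_s$ with $\lambda_s\ten\id$, not with $\lambda_s\ten\alpha_s$).

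The genuine gap is in $(1)\Rightarrow(3)$, and it stems from your reading of ``extension''. In \cite[D\'{e}finition 3.3]{ADI} an extension $(N,G,\beta)$ of $(M,G,\alpha)$ is \emph{not} merely a $G$-equivariant unital inclusion $M\hookrightarrow N$: it additionally requires the existence of a (not necessarily equivariant) projection of norm one $N\to M$. This is precisely what makes the inclusion $M\hookrightarrow N$ \emph{admissible} in the sense of relative homological algebra, and without it relative $\LO$-injectivity of $M$ does \emph{not} produce an $\LO$-module splitting $N\to M$. Your sentence ``the relative $\LO$-injectivity of $M$ together with the inclusion $M\hookrightarrow N$ in $\mathbf{mod}\LO$ produces an $\LO$-morphism $N\to M$ restricting to the identity'' is asserting full $\LO$-injectivity, which we do not have.

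Once one uses the correct definition and obtains an $\LO$-module projection $P:N\to M$, your proposed upgrade to $G$-equivariance via unitary implementation does not transfer from \propref{p:Zimmer}: there the conjugation by $U$ converts between the two natural $G$-actions on the specific domain $\LI\oten M$, and there is no analogue for a general $N$. The paper instead observes that $P$ restricts to a $G$-equivariant map on the continuous parts $N^c\to M^c$ (since $N^c=\la N\ast\LO\ra$ and the argument of \propref{p:G-WEP} applies), and then invokes \cite[Lemme 2.1]{ADII} to pass from a $G$-equivariant projection on continuous parts to one on the full von Neumann algebras.
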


\begin{proof} The equivalence of $(1)$ and $(2)$ follows from the proof of Theorem \ref{t:A(G)inj}. The only difference, here, is that $M$ is not necessarily injective. In this case, condition (2) implies the relative $\LO$-injectivity of $M$, which implies (1) by Proposition \ref{p:Zimmer}.

$(1)\Longrightarrow(3)$ If $(N,G,\beta)$ is any extension of $(M,G,\alpha)$, then by definition (see \cite[D\'{e}finition 3.3]{ADI} there is a linear projection of norm one from $N$ onto $M$. By Proposition \ref{p:Zimmer}, $M$ is relatively $\LO$-injective, so there is an $\LO$-module projection $P$ of norm one from $N$ onto $M$. Since the continuous parts $N^c$ and $M^c$ of $N$ and $M$ coincide with $\la N\ast \LO\ra$ and $\la M\ast\LO\ra$, respectively (see \cite[Lemma 7.5.1]{Ped}), it follows that $P$ induces a $G$-equivariant projection of norm one from $N^c$ onto $M^c$. Hence, (3) follows from \cite[Lemme 2.1]{ADII}.

$(3)\Longrightarrow(1)$ Simply apply $(3)$ to the extension $(\LI\oten M,G,\lm\ten\alpha)$ of $(M,G,\alpha)$.

\end{proof}

Buss, Echterhoff and Willett recently introduced the following notion of amenability for $C^*$-dynamical systems \cite{BEW3}: $(A,G,\alpha)$ is \textit{amenable} if there exists a net of norm-continuous, compactly supported, positive type functions $h_i:G\rightarrow Z(A_\alpha'')$ such that $\norm{h_i(e)}\leq 1$ for all $i$, and $h_i(s)\rightarrow 1$ weak* in $A_\alpha''$, uniformly for $s$ in compact subsets of $G$. It was subsequently shown by the authors of this paper that this notion coincides with amenability of the universal enveloping system $(A_\alpha'',G,\overline{\alpha})$ \cite[Theorem 4.2]{BC}, and, for commutative systems $(C_0(X),G,\alpha)$, coincides with topological amenability of the transformation group $(G,X)$ \cite[Corollary 4.12]{BC}. 

In \cite[Proposition 7.10]{BEW3}, it was shown that for $C^*$-dynamical systems $(A,G,\alpha)$ with $A$ nuclear, amenability implies that $A$ has the $G$-WEP (equivalently, the $\LO$-WEP by Propositions \ref{p:G-WEP} and \ref{p:4.5}), and that both conditions are equivalent if $G$ is exact. We now complement this result by establishing a $C^*$-analogue of Theorem \ref{t:A(G)inj}.

\begin{thm}\label{t:A(G)WEP} Let $(A,G,\alpha)$ be a $C^*$-dynamical system with $A$ nuclear. Consider the following conditions:
\begin{enumerate}
\item $(A,G,\alpha)$ is amenable;
\item $G\ltimes A$ has the $A(G)$-WEP;
\item $A$ has the $\LO$-WEP.
\end{enumerate}
Then $(1)\Rightarrow(2)\Rightarrow(3)$, and if $G$ is exact, the conditions are equivalent.
\end{thm}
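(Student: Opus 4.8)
The plan is to treat the three implications separately. The implication $(3)\Rightarrow(1)$ under the exactness hypothesis is already available: by \cite[Proposition 7.10]{BEW3}, together with Propositions~\ref{p:G-WEP} and \ref{p:4.5} identifying the $\LO$-WEP with the $G$-WEP, if $A$ is nuclear, $G$ is exact and $A$ has the $\LO$-WEP, then $(A,G,\alpha)$ is amenable. So it suffices to prove $(1)\Rightarrow(2)\Rightarrow(3)$, which will also close the cycle of equivalences when $G$ is exact.

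For $(1)\Rightarrow(2)$: assume $(A,G,\alpha)$ is amenable. By \cite[Theorem 4.2]{BC} the universal enveloping $W^*$-dynamical system $(A_\alpha'',G,\overline{\alpha})$ is amenable, and since $A$ is nuclear $A^{**}$ is injective, hence so is the corner $A_\alpha''=zA^{**}$. Applying Theorem~\ref{t:A(G)inj} to $(A_\alpha'',G,\overline{\alpha})$ shows that $G\bar{\ltimes}A_\alpha''$ is $A(G)$-injective (its proof in fact produces an $A(G)$-equivariant norm-one projection $\BLT\oten A_\alpha''\to G\bar{\ltimes}A_\alpha''$). Under the canonical $A(G)$-equivariant $*$-isomorphism $(G\ltimes A)''\cong G\bar{\ltimes}A_\alpha''$, the reduced crossed product $G\ltimes A$ embeds $A(G)$-equivariantly as a weak*-dense $C^*$-subalgebra of the $A(G)$-injective von Neumann algebra $(G\ltimes A)''$; by Theorem~\ref{A-WEP-eq-conds}(6) it therefore suffices to factor $i\colon G\ltimes A\hookrightarrow(G\ltimes A)^{**}$ through $(G\ltimes A)''$, i.e.\ to produce an $A(G)$-module map $v\colon (G\ltimes A)''\to(G\ltimes A)^{**}$ with $v|_{G\ltimes A}=i$. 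To build $v$, I would first use that amenability of the action together with nuclearity of $A$ forces $G\ltimes A$ to be nuclear \cite{BEW3}, hence to have the operator space WEP; representing $G\ltimes A\subseteq\mc{B}(L^2(G,H))$ with weak closure $(G\ltimes A)''$, an operator-space weak expectation $\mc{B}(L^2(G,H))\to(G\ltimes A)^{**}$ restricts to a complete contraction $v_0\colon (G\ltimes A)''\to(G\ltimes A)^{**}$ fixing $G\ltimes A$ but not a priori $A(G)$-equivariant. Averaging $v_0$ against the $A(G)$-invariant state on $VN(G)$ \cite[Theorem 4]{Re} and the dual co-actions on source and target then yields an $A(G)$-equivariant complete contraction $v$ still restricting to $i$ on $G\ltimes A$, giving the required factorization.

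For $(2)\Rightarrow(3)$: assume $G\ltimes A$ has the $A(G)$-WEP. I would run the descent of $(2)\Rightarrow(3)$ of Theorem~\ref{t:A(G)inj} in the WEP setting. Fix a faithful non-degenerate representation $A\subseteq\mc{B}(H)$, so that $G\ltimes A$ embeds $A(G)$-equivariantly into the $A(G)$-injective operator space $\mc{B}(L^2(G,H))=\BLT\oten\mc{B}(H)$; the $A(G)$-WEP produces an $A(G)$-module map $\psi\colon\mc{B}(L^2(G,H))\to(G\ltimes A)^{**}$ with $\psi|_{G\ltimes A}=i$, which (using rigidity of the $A(G)$-injective envelope to transport the bimodule action) can be taken to be a $VN(G)\oten 1$-bimodule map. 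Compose with the bidual $E^{**}\colon(G\ltimes A)^{**}\to A^{**}$ of the canonical conditional expectation $E\colon G\ltimes A\to A$. Since $E$ intertwines conjugation by $\lambda(s)\oten 1$ with $\widetilde{\alpha}_{s^{-1}}$, the $VN(G)\oten 1$-bimodularity of $\psi$ makes $E^{**}\circ\psi$, restricted to $\LI\oten A^{**}$, a right $\LO$-module map via \cite[Theorem 4.9]{CN}; and because $\alpha\colon A\to\LI\oten A^{**}$ is the canonical $\LO$-module embedding $j_A$ with $E^{**}\circ\psi\circ\alpha=i_A$, this exhibits $i_A$ as factoring through $\mc{CB}(\LO,A^{**})=\LI\oten A^{**}$. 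By Lemma~\ref{l:CB(A,M) A-inj for dual M} the latter is $\LO$-injective (here nuclearity of $A$, i.e.\ injectivity of $A^{**}$, is used once more), so Theorem~\ref{A-WEP-eq-conds}(6) gives the $\LO$-WEP of $A$.

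The main obstacle, in both implications, is the transfer between the $A(G)$-module structure carried by the crossed product and the $\LO$-module structure on $A$. In the $W^*$-setting of Theorem~\ref{t:A(G)inj} the relevant map is a conditional expectation onto a $C^*$-subalgebra, so its $VN(G)\oten 1$-bimodularity---and hence, via \cite[Theorem 4.9]{CN}, its $\LO$-equivariance---is supplied by Tomiyama's theorem; here the weak expectations are not projections onto subalgebras, so bimodularity must be arranged by hand, via rigidity of the $A(G)$-injective envelope in $(2)\Rightarrow(3)$ and via averaging over the $A(G)$-invariant state on $VN(G)$ in $(1)\Rightarrow(2)$. That, together with the bookkeeping of the central projection $z$ in $A_\alpha''=zA^{**}$ and the identification $(G\ltimes A)''\cong G\bar{\ltimes}A_\alpha''$, is where I expect the real work to lie.
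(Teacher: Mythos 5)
Your overall architecture coincides with the paper's: $(3)\Rightarrow(1)$ is quoted from \cite{BEW3} together with Propositions \ref{p:G-WEP} and \ref{p:4.5}; $(1)\Rightarrow(2)$ is reduced, via \cite{BC} and Theorem \ref{t:A(G)inj}, to producing an $A(G)$-morphism $(G\ltimes A)''\cong G\bar{\ltimes}A_\alpha''\to (G\ltimes A)^{**}$ restricting to the canonical inclusion on $G\ltimes A$; and $(2)\Rightarrow(3)$ descends a weak expectation to an $\LO$-factorization of $i_A$. But precisely at the two points where you depart from the paper your argument has genuine gaps. For $(1)\Rightarrow(2)$, your construction of the map $v$ --- take a weak expectation $v_0$ furnished by nuclearity of $G\ltimes A$ and ``average against the $A(G)$-invariant state on $VN(G)$ and the dual co-actions on source and target'' --- does not work as stated: $(G\ltimes A)^{**}$ carries no dual co-action of $VN(G)$ (only the bidual $A(G)$-module structure), any averaging formula would require an amplification $\id_{VN(G)}\ten v_0$ which is undefined since $v_0$ is not weak*-continuous, and one would still have to check that the averaged map restricts to the inclusion on $G\ltimes A$. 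The invariant mean of \cite{Re} enters the paper only to give $A(G)$-injectivity of $\BLT$ (via \cite{CN}), not as a device for equivariantizing arbitrary maps into biduals. The paper builds $v$ directly from the amenability data: the Herz--Schur multipliers $\Theta(h_i)$ attached to the compactly supported positive-type functions $h_i$ are already $A(G)$-morphisms, their adjoints map $(G\ltimes A)^*$ into $A(G\ltimes A)=((G\ltimes A)'')_*$ because the $h_i$ are compactly supported, so $\Phi_i=(\Theta(h_i)^*)^*:G\bar{\ltimes}A_\alpha''\to(G\ltimes A)^{**}$ exist and a cluster point restricts to the inclusion since $\Theta(h_i)\to\id$ in point norm. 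Some such explicit mechanism is needed; nuclearity of the crossed product plus an unspecified averaging is not a proof.

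For $(2)\Rightarrow(3)$ there are two problems. First, you compose with the bidual of ``the canonical conditional expectation $E:G\ltimes A\to A$''; for non-discrete $G$ no such conditional expectation exists --- $A$ is not even a subalgebra of $G\ltimes A$, only of $M(G\ltimes A)$ --- so this route is unavailable in the stated generality. Second, Lemma \ref{l:multiplier} only makes the weak expectation an $M(G\ltimes A)$-bimodule map, hence a $C^*_\lm(G)\ten 1$-bimodule map; since it is not weak*-continuous there is no reason it should be a $VN(G)\ten 1$-bimodule map, ``rigidity of the $A(G)$-injective envelope'' does not supply this, and \cite[Theorem 4.9]{CN} is used in the paper only for the normal projection appearing in the $W^*$-argument of Theorem \ref{t:A(G)inj}. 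The paper circumvents both issues: it composes the $A(G)$-equivariant weak expectation $\BLT\oten A_\alpha''\to(G\ltimes A)^{**}$ with the canonical surjection onto $G\bar{\ltimes}A_\alpha''$, proves the resulting map is an $\LO$-module map for the left-leg action by a multiplicative-domain argument (the slices of $W\ten 1\in M(C_0(G)\iten(G\ltimes A))$ lie in $M(G\ltimes A)$ and are fixed, so $W\ten 1$ lies in the multiplicative domain), uses triviality of the $A(G)$-action on $\LI$ to conclude $E(\LI\oten A_\alpha'')\subseteq\overline{\alpha}(A_\alpha'')$, and only then reaches $A^{**}$ by composing with the $\LO$-morphism $\Phi:A_\alpha''\to A^{**}$ constructed in the proof of Proposition \ref{p:4.5}. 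To repair your sketch you would need to replace the conditional expectation and the $VN(G)\ten 1$-bimodularity claim by these (or equivalent) arguments.
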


\begin{proof} $(1)\Longrightarrow(2)$: If $(A,G,\alpha)$ is amenable, then by \cite[Theorem 4.2]{BC} there exists a net $(h_i)$ of continuous compactly supported functions $h_i:G\rightarrow\mc{CB}(A)$, whose corresponding Herz-Schur multipliers $\Theta(h_i):G\ltimes A\rightarrow G\ltimes A$ (see \cite{MTT} and \cite{BC}) satisfy
$$\Theta(h_i)(\alpha\times\lm(f))=\int_G \alpha(h_i(s)(f(s)))(\lm_s\ten1) \ ds, \ \ \ f\in C_c(G,A),$$
$\norm{\Theta(h_i)}_{cb}\leq1$ and $\Theta(h_i)\rightarrow\id_{G\ltimes A}$ in the point norm topology. Note that each $\Theta(h_i)$ is an $A(G)$-morphism. 

As $G\ltimes_f A\cong G\ltimes A$ via the regular representation \cite[Proposition 5.9]{BEW3}, we have 
$$B(G\ltimes_f A)= (G\ltimes_f A)^*=(G\ltimes A)^*.$$
Since each $h_i$ is compactly supported, and compactly supported elements of $B(G\ltimes A)^+$ lie in $A(G\ltimes A)^+$ \cite[Lemma 7.7.6]{Ped}, it follows that $\Theta(h_i)^*$ maps $(G\ltimes A)^*$ into $A(G\ltimes A)=(G\ltimes A)''_*$. Representing $A$ faithfully inside $A_\alpha''$, it follows that $(G\ltimes A)''=G\bar{\ltimes}A_\alpha''$. Hence, the adjoint of the co-restriction of $\Theta(h_i)^*$ defines a completely contractive $A(G)$-morphism
$$\Phi_i:G\bar{\ltimes}A_\alpha''\rightarrow(G\ltimes A)^{**}.$$
Clustering the resulting net $(\Phi_i)$ to an $A(G)$-morphism $\Phi\in\mc{CB}(G\bar{\ltimes}A_\alpha'',(G\ltimes A)^{**})$, and appealing to the point norm convergence $\Theta(h_i)\rightarrow\id_{G\ltimes A}$, it follows that the diagram
\begin{equation*}
\begin{tikzcd}
&G\bar{\ltimes}A_\alpha''\arrow[rd, "\Phi"]\\
G\ltimes A\arrow[ru, hook ]\arrow[rr, hook] &  &(G\ltimes A)^{**}
\end{tikzcd}
\end{equation*}
commutes. Since $A$ is nuclear, $A_\alpha''$ is injective, and since $(A_\alpha'',G,\overline{\alpha})$ is amenable, Theorem \ref{t:A(G)inj} guarantees the $A(G)$-injectivity of $G\bar{\ltimes}A_\alpha''$. Hence, by Theorem \ref{A-WEP-eq-conds}, $G\ltimes A$ has the $A(G)$-WEP.

$(2)\Longrightarrow(3)$: If $G\ltimes A$ has the $A(G)$-WEP then the canonical inclusion $i:G\ltimes A\hookrightarrow\BLT\oten A_\alpha''$ is $A(G)$-flat, implying the existence of a $A(G)$-equivariant weak expectation $\BLT\oten A_\alpha''\rightarrow(G\ltimes A)^{**}$, which, by Lemma \ref{l:multiplier} is a completely positive $M(G\ltimes A)$-bimodule map. Composing with the canonical $A(G)$-morphism $(G\ltimes A)^{**}\twoheadrightarrow G\bar{\ltimes} A_\alpha''$ we obtain a completely positive $A(G)$-morphism $E:\BLT\oten A_\alpha''\rightarrow G\bar{\ltimes} A_\alpha''$ (which is also an $M(G\ltimes A)$-bimodule map) making the following diagram commute:
\begin{equation*}
\begin{tikzcd}
\BLT\oten A_\alpha''\arrow[rd, "E"]\\
G\ltimes A\arrow[u, hook ]\arrow[r, hook] & G\bar{\ltimes} A_\alpha''
\end{tikzcd}
\end{equation*}
The left fundamental unitary of $\LI$ satisfies $W\ten 1\in M(C_0(G)\iten (G\ltimes A))$
by the ``left handed version'' of \cite[Lemma 3.3]{LPRS}. Thus, for any $f\in\LO$, 
$$(f\ten\id\ten\id)(\id\ten E)(W\ten 1)=E((f\ten\id\ten\id)(W\ten 1))=(f\ten\id\ten\id)(W\ten 1),$$
as $(f\ten\id\ten\id)(W\ten 1)\in M(G\ltimes A)$. Then $(\id\ten E)(W\ten 1)=W\ten 1$, so that $W\ten 1$ lies in the multiplicative domain of the unital completely positive map $(\id\ten E)$. Then, as in \cite[Theorem 3.2]{NV}, the module property of unital completely positive maps over their multiplicative domains implies that
\begin{align*}E(T\cdot f)&=E((f\ten\id\ten\id)(W\ten 1)^*(1\ten T)(W\ten 1))\\
&=(f\ten\id\ten\id)(\id\ten E)((W\ten 1)^*(1\ten T)(W\ten 1)))\\
&=(f\ten\id\ten\id)((W\ten 1)^*(1\ten E(T))(W\ten 1))\\
&=E(T)\cdot f,
\end{align*}
where $T\cdot f$ is the action of $\LO$ on the left leg of $\BLT\oten A_\alpha''$.

Now, the dual co-action $\h{\overline{\alpha}}:G\bar{\ltimes} A_\alpha''\rightarrow VN(G)\oten (G\bar{\ltimes} A_\alpha'')$ on the crossed product satisfies $(G\bar{\ltimes}A_\alpha'')^{\h{\overline{\alpha}}}=\overline{\alpha}(A_\alpha'')$. Since $A(G)$ acts trivially on $\LI$, the $A(G)$-module property of $E$ implies
$$E|_{\LI\oten A_\alpha''}:\LI\oten A_\alpha''\rightarrow(G\bar{\ltimes} A_\alpha'')^{\h{\overline{\alpha}}}=\overline{\alpha}(A_\alpha''),$$
so we obtain a right $\LO$-module map $\Psi:\LI\oten A_\alpha''\rightarrow A^{**}$ via $\Phi\circ\overline{\alpha}^{-1}\circ E|_{\LI\oten A_\alpha''}$, where $\Phi:A_\alpha''\rightarrow A^{**}$ is the $\LO$-morphism constructed in the proof of Proposition \ref{p:4.5}. Since $\overline{\alpha}(A)=\alpha(A)\subseteq M(G\ltimes A)$, $E$ is an $M(G\ltimes A)$-bimodule map, and $\Phi|_{A}=i_{A}$ (see the proof of Proposition \ref{p:4.5}) the following diagram commutes

\begin{equation*}
\begin{tikzcd}
&\LI\oten A_\alpha''\arrow[rd, "\Psi"]\\
A\arrow[ru, "\overline{\alpha}|_{A}" ]\arrow[rr, hook, "i_{A}"] &  &A^{**}.
\end{tikzcd}
\end{equation*}

Since $A_\alpha''$ is an injective von Neumann algebra and $\LI$ is $\LO$-injective, it follows that $\LI\oten A_\alpha''$ is $\LO$-injective. Hence, the canonical inclusion $i_{A}:A\hookrightarrow A^{**}$ factors through an injective $\LO$-module, implying that $A$ has the $\LO$-WEP.

Finally, if $G$ is exact, the equivalence between the $G$-WEP and the $\LO$-WEP (Proposition \ref{p:G-WEP}) together with \cite[Proposition 7.10]{BEW3} shows that $(3)\Rightarrow(1)$.
\end{proof}

As a special case, we now establish the promised $A(G)$-equivariant analogue of Lance's theorem for discrete groups \cite{L}.

\begin{cor}\label{t:A(G)WEP} Let $G$ be a locally compact group. Then $G$ is amenable if and only if $C^*_\lm(G)$ has the $A(G)$-WEP.\end{cor}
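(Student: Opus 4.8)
The plan is to apply the preceding theorem to the trivial $C^*$-dynamical system $(\mathbb C,G,\iota)$, where $\iota$ denotes the trivial action of $G$ on $\mathbb C$. For this system $\mathbb C$ is nuclear, the reduced crossed product $G\ltimes\mathbb C$ is canonically $C^*_\lm(G)$ as an $A(G)$-module (the action being $\lm(s)\cdot u=u(s)\lm(s)$), and the universal enveloping $W^*$-system is again the trivial system $(\mathbb C,G,\iota)$, with $\mathbb C_\iota''=\mathbb C$. I would first record that the trivial $W^*$-dynamical system $(\mathbb C,G,\iota)$ is amenable if and only if $G$ is amenable: after identifying $\LI\oten\mathbb C$ with $\LI$, a norm-one projection $\LI\to\mathbb C$ intertwining $\lm_s$ with the identity is exactly a left invariant mean on $\LI$. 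By \cite[Theorem 4.2]{BC} this is equivalent to amenability of the $C^*$-dynamical system $(\mathbb C,G,\iota)$.

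For the forward implication, suppose $G$ is amenable. Then $(\mathbb C,G,\iota)$ is an amenable $C^*$-dynamical system with nuclear coefficient algebra, so the implication $(1)\Rightarrow(2)$ of the preceding theorem gives that $C^*_\lm(G)=G\ltimes\mathbb C$ has the $A(G)$-WEP. (This direction also follows from \thmref{t:A(G)inj} applied with $M=\mathbb C$, recovering the $A(G)$-injectivity of $VN(G)$ established in \cite[Corollary 5.3]{C}.)

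For the converse, suppose $C^*_\lm(G)$ has the $A(G)$-WEP. The implication $(2)\Rightarrow(3)$ of the preceding theorem --- which needs no exactness hypothesis --- shows that the operator $\LO$-module $\mathbb C$, carrying the augmentation action $z\ast f=\bigl(\int_G f(s)\,ds\bigr)z$, has the $\LO$-WEP. Since $\mathbb C$ is its own operator space predual and the module action is trivially weak*-continuous, $\mathbb C$ is a dual operator $\LO$-module, so by Example \ref{examps, incl dual mods} (2) it is $\LO$-injective, hence relatively $\LO$-injective. By \propref{p:Zimmer}, the trivial $W^*$-dynamical system $(\mathbb C,G,\iota)$ is amenable, that is, $G$ is amenable.

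The only substantive ingredient is the preceding theorem; everything else amounts to unwinding the trivial system and invoking \propref{p:Zimmer} to convert $\LO$-injectivity of the module $\mathbb C$ into amenability of $G$. I therefore expect no genuine obstacle --- the one point to verify carefully is that $(2)\Rightarrow(3)$ of the preceding theorem is truly unconditional, so that the converse direction does not covertly require $G$ to be exact (it would, were one to route it through \cite[Proposition 7.10]{BEW3} directly).
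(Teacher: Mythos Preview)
Your proposal is correct. The forward direction is identical to the paper's. For the converse, however, you take a genuinely different route.

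The paper argues directly: from the $A(G)$-WEP of $C^*_\lm(G)$ one obtains an $A(G)$-weak expectation $\Psi:\BLT\to C^*_\lm(G)^{**}$, which by Lemma~\ref{l:multiplier} is automatically $M(C^*_\lm(G))$- and hence $G$-equivariant; composing with the canonical map $C^*_\lm(G)^{**}\to VN(G)$ gives a $G$-equivariant $A(G)$-morphism $\Phi:\BLT\to VN(G)$, and since $A(G)$ acts trivially on $\LI$ one has $\Phi(\LI)\subseteq VN(G)^{\h{\Delta}}=\bC1$, producing an invariant mean. This is short, self-contained, and avoids invoking the full proof of $(2)\Rightarrow(3)$ in the preceding theorem.

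Your route instead feeds the trivial system through $(2)\Rightarrow(3)$ of Theorem~\ref{t:A(G)WEP} (correctly noting this implication is unconditional), obtains the $\LO$-WEP of $\bC$, upgrades to $\LO$-injectivity via Example~\ref{examps, incl dual mods}(2), and concludes amenability via Proposition~\ref{p:Zimmer}. This has the conceptual advantage of exhibiting the corollary as a literal specialization of the theorem in both directions, but it leans on the rather involved proof of $(2)\Rightarrow(3)$ --- which, when unwound for $A=\bC$, essentially reproduces the paper's direct argument (the map $E|_{\LI\oten A_\alpha''}$ there becomes precisely the invariant mean). Your caveat about verifying that $(2)\Rightarrow(3)$ needs no exactness is well-placed and correct.
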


\begin{proof} 
The forward direction follows immediately from $(1)\Rightarrow(2)$ in Theorem \ref{t:A(G)WEP} applied to $(\bC,G,\mathrm{trivial})$. Conversely, if $C^*_\lm(G)$ has the $A(G)$-WEP, then there is an $A(G)$-weak expectation $\Psi: \BLT \to C^*_\lm(G)^{**}$ that restricts to the identity map on $C^*_\lm(G)$. By Lemma \ref{l:multiplier} $\Psi$ is $M(C^*_\lm(G))-$, hence $G$-equivariant. Composing with the canonical map $C^*_\lm(G)^{**} \to VN(G)$, we get a $G$-equivariant $A(G)$-morphism $\Phi: B(L^2(G)) \to VN(G)$. Since $A(G)$ acts trivially on $L^\infty(G)$, we have $\Phi(L^\infty(G))\subset VN(G)^{\h{\Delta}} = \bC1$. Hence $\Phi|_{L^\infty(G)}$ defines an invariant mean, and $G$ is amenable.
\end{proof}

\section*{Acknowledgements}

The authors would like to thank Mehrdad Kalantar for his involvement in the early stages of the project. The second author was partially supported by the NSERC Discovery Grant RGPIN-2017-06275.

\end{spacing}

\end{document}